\newtheorem{thm}{Theorem} [section]
\theoremstyle{definition}
\newtheorem{example}[thm]{Example}
\newtheorem{rem}[thm]{Remark}
\theoremstyle{plain}
\newtheorem{prop}[thm]{Proposition}
\newtheorem{lem}[thm]{Lemma}
\newtheorem{cor}[thm]{Corollary}
\numberwithin{equation}{section}
\newcommand{\ad}{{\text{ad}}}
\newcommand{\C}{\mathbb C}
\newcommand{\g}{\mathfrak{g}}
\newcommand{\gl}{\mathfrak{gl}}
\newcommand{\h}{\mathfrak{h}}
\newcommand{\la}{\lambda}
\newcommand{\mc}{\mathcal}
\newcommand{\mf}{\mathfrak}
\newcommand{\n}{\mathfrak n}
\newcommand{\one}{{\ov 1}}
\newcommand{\ov}{\overline}
\newcommand{\ve}{{\varepsilon}}
\newcommand{\oo}{{\ov 0}}
\newcommand{\coindp}{$\text{Coind}_{\mf{g}_{\geq 0}}^{\mf{g}}$}
\newcommand{\coindn}{$\text{Coind}_{\mf{g}_{\leq 0}}^{\mf{g}}$}
\newcommand{\ucoindp}{$\text{coind}_{\mf{g}_{\geq 0}}^{\mf{g}}$}
\title[Simple supermodules over Lie superalgebras]{Simple supermodules over Lie superalgebras}
\author[Chih-Whi Chen]{Chih-Whi Chen}
\address{} \email{}
\author[Volodymyr Mazorchuk]{Volodymyr Mazorchuk}
\address{}
\email{}
\date{}
\keywords{}
\subjclass[2010]{
	16E30, 17B10}
\begin{document}

\begin{abstract} 
We show that, for many Lie superalgebras admitting a compatible $\mathbb{Z}$-grading, 
Kac induction functor gives rise to a bijection between simple  
supermodules over a Lie superalgebra  and simple supermodules over 
the even part of this Lie superalgebra. This reduces the classification problem for 
the former to the one for the latter. Our result applies to all
classical Lie superalgebra of type $I$, in particular,
to the general linear Lie superalgebra $\gl(m|n)$. In the latter case we also
show that the rough structure of simple $\gl(m|n)$-supermodules and also that of Kac supermodules depends
only on the annihilator of the $\mf{gl}(m)\oplus \mf{gl}(n)$-input and 
hence can be computed using the combinatorics of BGG category $\mathcal{O}$.
\end{abstract}
\maketitle

\setcounter{tocdepth}{1}


 %
 %
   \vskip 1cm

\section{Introduction and description of the results}\label{sectintro}
 
Classification problems are central in representation theory. One of the basic classification
problems is the problem of classification of all simple modules for a given algebra. For
Lie algebras, this problem is rather difficult. For simple Lie algebras, some kind of solution
(more precisely, a reduction theorem which reduces classification of simple modules to classification of
equivalence classes of irreducible elements in a certain non-commutative principal ideal domain)
exists only for the Lie algebra $\mathfrak{sl}(2)$, see Block's paper \cite{Bl}.

For a Lie superalgebra $\mathfrak{g}$, classification of simple $\mathfrak{g}$-supermodules is,
naturally, at least as hard as classification of simple modules over the even Lie algebra part 
$\mathfrak{g}_{\oo}$. In case $\mathfrak{g}_{\oo}$ is isomorphic to $\mathfrak{sl}(2,\mathbb{C})$
or $\mathfrak{gl}(2,\mathbb{C})$, one could expect some analogue of Block's classification theorem.
For the Lie superalgebras $\mathfrak{osp}(1|2)$, such an analogue was obtained in \cite{BO}
following Block's approach and, for the Lie superalgebra $\mathfrak{q}(2)$ (and its various subquotients), 
such an analogue was obtained in \cite{Ma} using a reduction technique based on application 
of Harish-Chandra bimodules.

There are also some special cases in which much stronger results are known. The most significant one
is the equivalence of certain categories of strongly typical $\mathfrak{g}$-supermodules and 
certain categories of $\mathfrak{g}_{\oo}$-modules established in \cite{Go02s} for basic classical
Lie superalgebras. This equivalence automatically provides a bijection between isomorphism classes of 
simple objects in categories in question and hence reduces the relevant part of the
classification problem for $\mathfrak{g}$ to the corresponding problem for $\mathfrak{g}_{\oo}$.
There are also various constructions of certain classes of simple (non highest weight)
modules over Lie superalgebras, see e.g. \cite{DMP,GG,FGG,BCW,WZZ,BM,CZ} and references therein.

The main motivation for the present paper was to investigate in which generality one can obtain
a complete reduction result which connects classification of simple $\mathfrak{g}$-supermodules and
classification of simple $\mathfrak{g}_{\oo}$-(super)modules. Our first main result is the following.

{\bf Theorem A.} {\em Let $\mf g = \mf g_\oo\oplus \mf g_{\one}$ be a Lie superalgebra 
with $\text{dim}(\mf g_{\one})< \infty$ admitting
a compatible $\mathbb{Z}$-grading $\mf g= \mf g_{-1}\oplus \mf g_0 \oplus \mf g_{1}$.
Let $K({}_-):\mathfrak{g}_\oo\text{-}\mathrm{smod}\to \mathfrak{g}\text{-}\mathrm{smod}$ 
be Kac induction functor. Then, for any simple $\mathfrak{g}_\oo$-supermodule $V$,
the $\mathfrak{g}$-supermodule $K(V)$ has simple top, denoted $L(V)$, and the
correspondence $V\mapsto L(V)$ gives rise to a bijection between the sets of 
isomorphism classes of simple $\mathfrak{g}_\oo$- and $\mathfrak{g}$-supermodules.}

We also provide, in full generality, several criteria for simplicity of Kac modules,
with arbitrary simple input, in terms of typicality of the involved central characters.
In the case of the general linear Lie superalgebra $\mathfrak{gl}(m|n)$ we also
study the rough structure of Kac modules with arbitrary simple input along with the
$\mathfrak{g}_\oo$-rough structure of simple $\mathfrak{g}$-supermodules.
Our second main result is the following:

{\bf Theorem B.} {\em For $\mathfrak{g}=\mathfrak{gl}(m|n)$, let $L$ and $L'$ be two
simple $\mathfrak{g}$-supermodules such
that there is a finite-dimensional supermodule $E$ with $E\otimes L\twoheadrightarrow L'$. 
Let $V$ and $V'$ be their $\mathfrak{g}_\oo=\mathfrak{gl}(m)\oplus \mathfrak{gl}(n)$-cor\-res\-pon\-dents.
Then both multiplicities $[K(V):L']$ and  
$[\mathrm{Res}^{\mathfrak{g}}_{\mathfrak{g}_\oo}(L):V']$ are well-defined and finite
and can be computed using BGG category $\mathcal{O}$. }

The paper is organized as follows: in Section~\ref{sectPre} we collected all necessary preliminaries.
In Section~\ref{s3} we compare (induced) Kac modules with their coinduced counterparts.
Section~\ref{s4} studies simple supermodules and, in particular, contains a proof of Theorem~A.
In this section one can also find detailed examples of all classical Lie superalgebras of type I
and various criteria of simplicity for Kac modules. Section~\ref{s5} is devoted to the study 
of rough structure and, in particular, establishes Theorem~B.

\vspace{5mm}
{\bf Acknowledgment.}
The first author is supported by Vergstiftelsen. The second author is supported by
the Swedish Research Council and G{\"o}ran Gustafsson Stiftelser.

\section{Preliminaries}\label{sectPre}
 
\subsection{}\label{sectPre.1} Throughout this paper, we let $\mf g = \mf g_\oo\oplus \mf g_{\one}$ be a Lie superalgebra with $\text{dim}( \mf g_{\one})< \infty$ and assume that $\mf g$ has a compatible $\mathbb{Z}$-grading $\mf g= \mf g_{-1}\oplus \mf g_0 \oplus \mf g_{1}$. Namely, $\mf g_0 =\mf g_\oo$ and  $\mf g_\one = \mf g_1 \oplus \mf g_{-1}$, where $\mf g_{\pm 1}$ are $\mf g_\oo$-submodules of $\mf g_\one$ with $[\mf g_{1},\mf g_{ 1}] =[\mf g_{-1},\mf g_{-1}]  =0$. We set $\mf g_{\geq 0}: = \mf g_0 \oplus \mf g_1$ and $\mf g_{\leq 0}: = \mf g_0 \oplus \mf g_{-1}$.
   
\subsection{} For a given vector superspace $V = V_\oo \oplus V_\one$ and a homogeneous element $v\in V$, we denote the parity of $v$ by $\ov v$. 
    We recall the parity reversing functor $\Pi$ defined on the category of vector superspaces as follows:
   $$(\Pi V)_\oo =V_\one,~(\Pi V)_\one =V_\oo.$$
   Throughout the present paper, all homomorphisms in the category of modules over Lie superalgebras are supposed to be homogeneous of degree zero. 
   Therefore, a module $M$ over a Lie superalgebra is not necessary isomorphic to  $\Pi M$.

    \subsection{}For a given Lie (super)algebra $\mf L$, we denote the universal enveloping algebra of $\mf L$ by $U(\mf L)$.   We let $U = U(\mf g)$ and $U_{\oo} =U(\mf g_\oo)$. Observe that $U$ is a finite extension of the ring $U_{\oo}$ with basis $\Lambda(\mf g_{\bar{1}})$, the exterior algebra of the vector space $\mf g_{\bar{1}}$. Let $Z(\mf g)$ and $Z(\mf g_\oo)$ denote the center of $U$ and $U_\oo$, respectively. Also, we denote the center of $\mf g_\oo$  by $\mf z(\mf g_\oo)$.
   For a given $\mf g$- (resp. $\mf g_\oo$-) central character $\chi$ and a $\mf g$- (resp. $\mf g_\oo$-)
   module $M$, we set $$M_{\chi} : = \{m\in M \|~ (z-\chi(z))^rm =0, \text{ for all } z\in Z(\mf g) 
   \,\,(\text{resp. }  z\in Z(\mf g_\oo))\text{ and all }r\gg 0 \}.$$ 
   
Consider the category $\mf g$-smod $= U$-smod of finitely generated (left) $U$-supermodules, 
the category $\mf g_{\oo}$-smod $= U_\oo$-smod of finitely generated (left) $U_\oo$-supermodules, and the category $U$-mod-$U$ of finitely generated $U$-$U$-bimodules.
As $\mf g_{\oo}$ is even, $\mf g_{\oo}\text{-smod}$ is just a direct sum of two copies of 
$\mf g_{\oo}\text{-mod}$, the category of finitely generated (left) $U_\oo$-supermodules.
We have the exact restriction, induction and coinduction functors 
\begin{displaymath}
\text{Res}_{\mf g_\oo}^{\mf g}:\mf g \text{-smod}\to \mf g_{\oo}\text{-smod}\quad\text{ and }\quad
\text{Ind}_{\mf g_\oo}^{\mf g},~\text{Coind}_{\mf g_\oo}^{\mf g}: \mf g_{\oo}\text{-smod}\rightarrow  \mf g \text{-smod}.
\end{displaymath}
By \cite[Theorem 2.2]{BF} (also see \cite{Go}), the functors
$\text{Ind}_{\mf g_\oo}^{\mf g}$ and $\text{Coind}^{\mf g}_{\mf g_\oo}$ are isomorphic up to the equivalence given by tensoring with the one-dimensional $\mf g_\oo$-module on the
top degree subspace of $U(\mf g_\one) = \Lambda \mf g_{\one}$.    
   
For a $\mf g$-central character $\chi$, we denote by $\mf g\text{-smod}_{\chi}$ the full subcategory of $\mf g$-smod consisting of all $\mf g$-supermodules annihilated by some power of $\chi$. Similarly, for a $\mf g_{\oo}$-central character $\chi$, we denote by $\mf g_{\oo}\text{-smod}_{\chi}$ the full subcategory of $\mf g_{\oo}$-smod consisting of all $\mf g_{\oo}$-supermodules annihilated by some power of $\chi$.

   \subsection{Induced modules} \label{sectInd} 
   For a given $\mf g_\oo$-supermodule $V$, we
   may extend $V$ trivially to a $\mf g_\oo \oplus \mf g_{1}$-supermodule  and define the {\em Kac module} of $V$ as follows: $$K(V) :=
   \text{Ind}_{\mf g_{\geq 0}}^{\mathfrak{g}}(V).$$

   This defines an exact functor $K(\cdot): \mf g_\oo\text{-smod}\rightarrow \mf g\text{-smod}$ which we call {\em Kac functor}.
   For a given $M\in \mf g$-smod, we have the usual adjunction
   \begin{equation}\label{eqkac1}
   \text{Hom}_{\mf g}(K(V), M)  = \text{Hom}_{\mf g_\oo}(V, M^{\mf g_1}),  
   \end{equation}
   where $M^{\mf g_1}:=\{m\in M\| ~ \mf g_1\cdot m =0\}.$
   
      Also, for a given  $\mf g_\oo$-supermodule $V$, we define the {\em opposite Kac module} $K'(V)$ of $V$, see e.g. \cite[Section 3.3]{Ge98}, which is given as follows:
   \begin{align}\label{DefoppoKac} &K'(V): = \text{Ind}_{\mf g_{\leq 0}}^{\mf g}(V),\end{align}
   where ${\mf g_{-1}}V$ is defined to be zero. Just like in the previous paragraph, this defines an exact functor $K'(\cdot): \mf g_\oo\text{-smod}\rightarrow \mf g\text{-smod}$ and we have a similar adjunction 
   \begin{equation}\label{eqkac2}
   \text{Hom}_{\mf g}(K'(V), M)  = \text{Hom}_{\mf g_\oo}(V, M^{\mf g_{-1}}),  
   \end{equation}
   where $M^{\mf g_{-1}}:=\{m\in M\| ~ \mf g_{-1}\cdot m =0\}.$
   We may observe that $K(V) \cong \Lambda(\mf g_{-1}) \otimes V$ and $K'(V) \cong \Lambda(\mf g_{1}) \otimes V$ as vector spaces. 
   
We set 
\begin{displaymath}
\Lambda^{\text{max}}(\mf g_{-1}) :=\Lambda^{\text{dim}\mf g_{-1}}(\mf g_{-1}) \quad\text{ and }\quad
\Lambda^{\text{max}}(\mf g_{1}) :=\Lambda^{\text{dim}\mf g_{1}}(\mf g_{1}).
\end{displaymath}

   \subsection{Coinduced modules} \label{sectCoind}  
    For a given $\mf g_\oo$-supermodule $V$, we
   may extend $V$ trivially to a $\mf g_\oo \oplus \mf g_{1}$-mo\-du\-le and define the {\em super} coinduced module  $\text{{\coindp $(V)$}}$ (cf. \cite{Ge98} and \cite[Chapter 4, Section 2]{Sc06}) of $V$ as follows:
   \begin{align*}&\{ f\in \text{Hom}_{\mathbb{C}}(U(\mathfrak{g}),N) |~f(pu) =(-1)^{\overline{p}\overline{f}} pf(u), \text{ for all homogeneous }p\in \mf{g}_{\geq 0}, u\in U(\mf{g})\}, \end{align*}
   with the action $(xf)(u):=(-1)^{\overline{x}(\overline{u}+\overline{f})}f(ux)$, for all homogeneous $x\in \mathfrak{g}$, $f\in \text{{\coindp $(V)$}}$ and $u\in U(\mf{g})$. In particular, we may observe that $\text{{\coindp $(V)$}}\cong \text{Hom}_{\mathbb C}(\Lambda (\mf g_{-1}), V)$ as vector space.
   
   The following adjunction is proved in \cite[Chapter 4, Section~2, Proposition 3]{Sc06}.
   \begin{lem}\label{supercoind} There are natural isomorphisms
   	$$\emph{Hom}_{\mf g_{\geq 0}}(\emph{Res}_{\mf g_{\geq 0}}^{\mf g}(V),W) \cong \emph{Hom}_{\mf g}(V,\emph{{\coindp $(W)$}})$$ given by 
   	\begin{align}
   	&  \emph{Hom}_{\mf g_{\geq 0}}(\emph{Res}_{\mf g_{\geq 0}}^{\mf g}(V),W) \ni \phi \mapsto \hat\phi \in  \emph{Hom}_{\mf g}(V,\emph{{\coindp $(W)$}}),\\
   	& \emph{Hom}_{\mf g_{\geq 0}}(\emph{Res}_{\mf g_{\geq 0}}^{\mf g}(V),W) \ni \tilde\psi  \mapsfrom \psi \in  \emph{Hom}_{\mf g}(V,\emph{{\coindp $(W)$}}),
   	\end{align}	where $\hat\phi(v)(y): = (-1)^{\ov y\ov v}\phi(yv),$
   	and $\tilde\psi(v): = \psi(v)(1),$ for homogeneous $y\in U$ and $v\in V$.
   \end{lem}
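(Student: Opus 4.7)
The plan is to verify, via direct sign-tracking, that the two assignments $\phi \mapsto \hat\phi$ and $\psi \mapsto \tilde\psi$ are well-defined, mutually inverse, and natural in both $V$ and $W$. Since the statement is essentially Frobenius reciprocity in the super setting, the entire argument reduces to careful bookkeeping of Koszul signs. I would proceed in three steps: well-definedness of $\hat\phi$, well-definedness of $\tilde\psi$, and the mutual inverse check.

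For well-definedness of $\hat\phi$, one must first check that $\hat\phi(v)$ lies in $\text{Coind}_{\mf g_{\geq 0}}^{\mf g}(W)$. Since the correspondence is to be homogeneous of degree zero, we have $\ov{\hat\phi(v)} = \ov v$, and the defining membership condition reads $\hat\phi(v)(pu) = (-1)^{\ov p\,\ov v}\,p\,\hat\phi(v)(u)$ for homogeneous $p \in \mf g_{\geq 0}$ and $u \in U(\mf g)$. Expanding the left-hand side gives $(-1)^{(\ov p + \ov u)\ov v}\phi(pu\cdot v) = (-1)^{(\ov p + \ov u)\ov v}\,p\,\phi(uv)$, using that $\phi$ is a degree-zero $\mf g_{\geq 0}$-map, which matches the right-hand side $(-1)^{\ov p\,\ov v + \ov u\,\ov v}\,p\,\phi(uv)$. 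Next, $\hat\phi$ itself must be $\mf g$-equivariant: expanding $\hat\phi(xv)(u)$ and $(x\hat\phi(v))(u)$ using the super action on $\text{Coind}_{\mf g_{\geq 0}}^{\mf g}(W)$ produces the prefactors $(-1)^{\ov u(\ov x + \ov v)}$ and $(-1)^{\ov x(\ov u + \ov v) + (\ov u + \ov x)\ov v}$ in front of $\phi(uxv)$, and these coincide modulo $2$ after cancelling the double occurrence of $\ov x\,\ov v$.

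For $\tilde\psi$ I would check $\mf g_{\geq 0}$-equivariance directly: $\tilde\psi(pv) = \psi(pv)(1) = (p\psi(v))(1) = (-1)^{\ov p\,\ov v}\psi(v)(p)$, and then applying the Coind membership condition $\psi(v)(p) = (-1)^{\ov p\,\ov v}\,p\,\psi(v)(1)$ yields $\tilde\psi(pv) = p\,\tilde\psi(v)$. The inverse check is short: $\tilde{\hat\phi}(v) = \hat\phi(v)(1) = \phi(v)$ is immediate, while $\widehat{\tilde\psi}(v)(y) = (-1)^{\ov y\,\ov v}\psi(yv)(1) = (-1)^{\ov y\,\ov v}(y\psi(v))(1)$, and a second application of the super action formula together with the Coind membership condition recovers $\psi(v)(y)$. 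Naturality in $V$ and in $W$ is then immediate from the pointwise formulas, since a homogeneous degree-zero $\mf g$-map (respectively $\mf g_{\geq 0}$-map) commutes with the action without additional signs.

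The main obstacle, as is typical for any super-Frobenius reciprocity argument, is the sign bookkeeping, especially in verifying $\hat\phi(v) \in \text{Coind}_{\mf g_{\geq 0}}^{\mf g}(W)$ and the $\mf g$-equivariance of $\hat\phi$, where the parities of three distinct homogeneous elements interact simultaneously. Once the convention that a degree-zero super map commutes with the action without additional signs is fixed, every remaining step becomes a mechanical computation modulo $2$.
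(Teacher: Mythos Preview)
Your argument is correct: the sign checks for the membership of $\hat\phi(v)$ in the coinduced module, the $\mf g$-equivariance of $\hat\phi$, the $\mf g_{\geq 0}$-equivariance of $\tilde\psi$, and the two inverse identities all go through as you indicate. One small remark: in the final inverse check $\widehat{\tilde\psi}(v)(y)=\psi(v)(y)$ you only need the extension of the action formula $(yf)(u)=(-1)^{\ov y(\ov u+\ov f)}f(uy)$ to $y\in U$, not the Coind membership condition, but this is harmless.

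As for comparison with the paper: the paper does not supply its own proof of this lemma at all. It simply records the statement and attributes it to \cite[Chapter~4, Section~2, Proposition~3]{Sc06}. Your direct verification is exactly the kind of computation that reference carries out, so there is no substantive difference in approach; you have just written out in full what the paper is content to cite.
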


Also we define the  {\em usual}  coinduced module $\text{{\ucoindp $(V)$}}$ (cf. \cite{Go}) of $V$ as the following $\mf g$-supermodule:
\begin{align*}&\{ f\in \text{Hom}_{\mathbb{C}}(U(\mathfrak{g}),N) |~f(pu) = pf(u), \text{ for all homogeneous }p\in \mf{g}_{\geq 0}, u\in U(\mf{g})\}, \end{align*}
with the action $(xf)(u):=f(ux)$, for all $x\in \mathfrak{g}$, $f\in \text{{\ucoindp $V$}}$ and $u\in U(\mf{g})$. 

Also, we have the following adjunction between the restriction functor and usual coinduction functor.

\begin{lem}\label{usualcoind} 
There are natural isomorphisms
$$\emph{Hom}_{\mf g_{\geq 0}}(\emph{Res}_{\mf g_{\geq 0}}^{\mf g}(V),W) = 
\emph{Hom}_{\mf g}(V,\emph{{\ucoindp $(W)$}})$$ given by
\begin{align}
&  \emph{Hom}_{\mf g_{\geq 0}}(\emph{Res}_{\mf g_{\geq 0}}^{\mf g}(V),W) \ni \phi \mapsto \hat\phi \in  \emph{Hom}_{\mf g}(V,\emph{{\ucoindp $(W)$}}),\\
& \emph{Hom}_{\mf g_{\geq 0}}(\emph{Res}_{\mf g_{\geq 0}}^{\mf g}(V),W) \ni \tilde\psi  \leftarrow \psi \in  \emph{Hom}_{\mf g}(V,\emph{{\ucoindp $(W)$}}),
\end{align}	where $\hat\phi(v)(y): = \phi(yv),$
and $\tilde\psi(v): = \psi(v)(1),$  for $y\in U$ and $v\in V$.
\end{lem}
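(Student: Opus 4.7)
The plan is to mirror the proof of Lemma~\ref{supercoind}, but in the simpler setting of \emph{usual} coinduction, which requires no sign twists because the defining equivariance condition $f(pu)=pf(u)$ carries no parity factor. I would construct the two maps $\phi\mapsto\hat\phi$ and $\psi\mapsto\tilde\psi$ from the formulas displayed in the statement, verify their well-definedness, then check directly that they are mutually inverse, and finally observe that naturality is automatic. Since all homomorphisms are assumed degree zero, there are no parity considerations to juggle.

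First I would verify that $\hat\phi(v)$ actually lies in $\text{coind}_{\mf g_{\geq 0}}^{\mf g}(W)$: for $p\in\mf g_{\geq 0}$ and $u\in U(\mf g)$,
\begin{equation*}
\hat\phi(v)(pu)=\phi(puv)=p\phi(uv)=p\hat\phi(v)(u),
\end{equation*}
where the middle equality uses $\mf g_{\geq 0}$-linearity of $\phi$. Next I would check that $v\mapsto\hat\phi(v)$ is $\mf g$-equivariant: by the definition of the $\mf g$-action on the coinduced module,
\begin{equation*}
(x\hat\phi(v))(y)=\hat\phi(v)(yx)=\phi(yxv)=\hat\phi(xv)(y)
\end{equation*}
for all $x\in\mf g$ and $y\in U(\mf g)$.

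Conversely, for $\psi\in\text{Hom}_{\mf g}(V,\text{coind}_{\mf g_{\geq 0}}^{\mf g}(W))$ I would check that $\tilde\psi(v)=\psi(v)(1)$ is $\mf g_{\geq 0}$-linear: for $p\in\mf g_{\geq 0}$,
\begin{equation*}
\tilde\psi(pv)=\psi(pv)(1)=(p\psi(v))(1)=\psi(v)(p)=p\psi(v)(1)=p\tilde\psi(v),
\end{equation*}
where the third equality uses the action on coind and the fourth is the defining equivariance property of an element of $\text{coind}_{\mf g_{\geq 0}}^{\mf g}(W)$. The inverse property is then a one-line check each way: $\widetilde{\hat\phi}(v)=\hat\phi(v)(1)=\phi(v)$ and $\widehat{\tilde\psi}(v)(y)=\tilde\psi(yv)=\psi(yv)(1)=(y\psi(v))(1)=\psi(v)(y)$. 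Naturality in $V$ and $W$ follows by unravelling definitions.

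The main obstacle is essentially nil; this is textbook Frobenius reciprocity, and the only care required is bookkeeping of which side of $U(\mf g)$ the action is being pushed to, together with the observation that the absence of signs here (in contrast to Lemma~\ref{supercoind}) is precisely what makes the simpler formula $\hat\phi(v)(y)=\phi(yv)$ work.
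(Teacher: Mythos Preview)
Your proof is correct and is precisely the standard verification of the restriction--coinduction adjunction. The paper's own proof is a single sentence invoking ``the usual adjunction between restriction and coinduction'' without spelling out the checks, so your argument is the same approach, just written out in full.
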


\begin{proof}
This follows from the usual adjunction between restriction and coinduction.
\end{proof}

Since both super coinduction functor \coindp$(\bullet)$ and usual coinduction functor $\text{coind}_{\mf g_{\geq} 0}^{\mf g}(\bullet)$ are right adjoint to the restriction functor by Lemma \ref{supercoind} and Lemma \ref{usualcoind}, it follows that \coindp$(\bullet) \cong \text{coind}_{\mf g_{\geq 0}}^{\mf g}(\bullet)$. The following lemma gives an explicit $\mf g$-isomorphism.

\begin{lem} \label{lem::CoindareIso}
There is a natural isomorphism {\em \coindp $(V)$} $\cong$ {\em \ucoindp $(V)$}.
\end{lem}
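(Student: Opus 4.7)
The plan is to exhibit an explicit natural isomorphism. Both $\text{Coind}_{\mf g_{\geq 0}}^{\mf g}(V)$ and $\text{coind}_{\mf g_{\geq 0}}^{\mf g}(V)$ live inside $\mrm{Hom}_{\mathbb C}(U(\mf g),V)$ and their defining equivariance conditions differ only by the sign factor $(-1)^{\bar p \bar f}$. Likewise, the two $\mf g$-actions differ only by the factor $(-1)^{\bar x(\bar u+\bar f)}$. This strongly suggests that the isomorphism is implemented by a parity-dependent sign twist of the \emph{evaluation}, not of the action itself. Of course one could simply invoke uniqueness of right adjoints using Lemmas \ref{supercoind} and \ref{usualcoind}, but the point of the lemma is to pin down a concrete formula.

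Concretely, for a homogeneous $f \in \text{Coind}_{\mf g_{\geq 0}}^{\mf g}(V)$ of parity $\bar f$, I would set
\[
\Phi(f)(u):=(-1)^{\bar f\,\bar u}f(u)
\]
on homogeneous $u\in U(\mf g)$, and extend bilinearly; note $\bar{\Phi(f)}=\bar f$.

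There are three things to verify, all of which are sign bookkeeping. First, that $\Phi(f)$ belongs to $\text{coind}_{\mf g_{\geq 0}}^{\mf g}(V)$: for homogeneous $p\in \mf g_{\geq 0}$, expanding $\Phi(f)(pu)$ with the Coind relation $f(pu)=(-1)^{\bar p\bar f}pf(u)$ produces the exponent $\bar f(\bar p+\bar u)+\bar p\bar f\equiv\bar f\bar u\pmod 2$, which is exactly what is needed to recover $p\,\Phi(f)(u)$. Second, that $\Phi$ intertwines the two $\mf g$-actions: computing $\Phi(xf)(u)$ against $(x\,\Phi(f))(u)=\Phi(f)(ux)$ one gets the exponents $\bar x\bar u+\bar f\bar u+\bar x(\bar u+\bar f)=\bar f\bar u+\bar x\bar f$ on the left and $\bar f(\bar u+\bar x)=\bar f\bar u+\bar f\bar x$ on the right; these agree. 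Third, the same formula $\Psi(g)(u):=(-1)^{\bar g\,\bar u}g(u)$ defines a map in the opposite direction, and $\Psi\circ\Phi=\mrm{id}$, $\Phi\circ\Psi=\mrm{id}$ follow immediately from $(-1)^{2\bar f\bar u}=1$.

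Naturality in $V$ is automatic because the prescription does not involve $V$ except through the target of the evaluation, so any $\mf g_\oo$-map $V\to V'$ commutes strictly with $\Phi$. The only real obstacle is keeping the signs straight; in particular one must take care to treat $\bar f$, $\bar u$, $\bar x$, $\bar p$ as independent parity parameters and reduce each exponent modulo $2$ carefully, since a single transposed factor would destroy both the equivariance and the intertwining checks.
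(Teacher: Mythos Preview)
Your proof is correct and essentially identical to the paper's: the paper defines $(\bullet)^{\dag}$ by $f^{\dag}(y):=(-1)^{\bar f\,\bar y}f(y)$, which is precisely your $\Phi$, and performs the same three sign checks (equivariance, intertwining, bijectivity) followed by the same remark on naturality. The only differences are notational.
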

 
\begin{proof}
Define a linear isomorphism $(\bullet)^{\dag}: \text{{\coindp $(V)$} $\rightarrow $ {\ucoindp $(V)$}}$ via 
$$f^{\dag}(y): = (-1)^{\ov f \cdot \ov y}f(y),$$
for all homogeneous elements $f \in \text{{\coindp $(V)$}}$ and $y \in U$ (also see, e.g., \cite[Definition 1.1]{ChWa12}).　 For a given  $f \in \text{{\coindp $(V)$}}$,  we check that $f^{\dag}\in$ {\ucoindp (V)}:
\begin{align}
&f^{\dag}(yx) = (-1)^{\ov f(\ov y+\ov x)}f(yx) = (-1)^{\ov f(\ov x)}yf(x) =y f^{\dag}(x),
\end{align} for homogeneous $y\in U(\mf g_{\geq 0})$ and $x \in U$. Therefore $(\bullet)^{\dag}$ is an even linear isomorphism between superspaces. 
 	
We now show that $(\bullet)^{\dag}$ intertwines the $\mf g$-actions. Let $f\in \text{{\coindp $(V)$}}$ and $x\in \mf g$ be homogeneous elements.  We may note that the parity of $xf$ is $\ov x + \ov f$. Also, for a given homogeneous element $y\in U$, we have 
\begin{align*}
(xf)^{\dag}(y)
& =(-1)^{(\ov x+\ov f)\ov y} (xf)(y) \\
& =(-1)^{(\ov x+\ov f)\ov y}(-1)^{(\ov f+\ov y)\ov x}f(yx) \\
& =(-1)^{(\ov x+\ov y)\ov f}f(yx) \\
& =f^{\dag}(yx)  = (xf^{\dag})(y). 
\end{align*} 
Consequently, we have $(xf)^{\dag} = xf^{\dag}$, for all $x\in \mf g$. 
Therefore $(\bullet)^{\dag}$ is an even isomorphism of $\mf g$-supermodules. The naturality of
$(\bullet)^{\dag}$ follows directly from the definitions.
\end{proof}

We note that Lemma \ref{lem::CoindareIso} can be used to match adjunctions in 
Lemma \ref{supercoind} and Lemma \ref{usualcoind}.

In a similar way, one  defines the {\em opposite} coinduced modules \coindn$(V)$, for a simple $\mf g_\oo$-supermodule $V$.
   
\section{Induced vs coinduced modules}\label{s3}

In this section we study, in detail, induced modules and coinduced modules and relations between them.
        
\subsection{Kac modules and opposite Kac modules}

\begin{lem}\label{newl1} Let $V$ be any non-zero $\mf g_0$-supermodule. Then
every non-zero $\mathfrak{g}_{-1}$-submodule of $K(V)$ has a non-zero intersection with 
$\Lambda^{\emph{max}}(\mf g_{-1})\otimes V$, in particular,
\begin{displaymath}
(K(V))^{\mathfrak{g}_{-1}}=\Lambda^{\emph{max}}(\mf g_{-1})\otimes V.
\end{displaymath}
\end{lem}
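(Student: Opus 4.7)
The plan is to reduce the claim to a standard PBW calculation inside $K(V)\cong\Lambda(\mf g_{-1})\otimes V$. Since $[\mf g_{-1},\mf g_{-1}]=0$, the universal enveloping algebra $U(\mf g_{-1})$ coincides with the exterior algebra $\Lambda(\mf g_{-1})$, and under the PBW identification the action of $\mf g_{-1}$, hence of all of $U(\mf g_{-1})$, on $K(V)$ is simply left multiplication on the first tensor factor. In particular, $\mf g_{-1}$-submodules of $K(V)$ are the same thing as $\Lambda(\mf g_{-1})$-submodules, so the statement becomes a combinatorial fact about left multiplication on the free $\Lambda(\mf g_{-1})$-module $\Lambda(\mf g_{-1})\otimes V$.

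For the first assertion, I would fix an ordered basis $x_1,\dots,x_n$ of $\mf g_{-1}$ and write the induced basis $\{x_S\}$ of $\Lambda(\mf g_{-1})$, indexed by subsets $S\subseteq \{1,\dots,n\}$, with $x_S=x_{i_1}\cdots x_{i_k}$ for $S=\{i_1<\cdots<i_k\}$. Any nonzero $u\in K(V)$ has a unique expansion $u=\sum_S x_S\otimes v_S$ with $v_S\in V$. I would set $k_0:=\min\{|S|\colon v_S\neq 0\}$, pick any $S_0$ with $|S_0|=k_0$ and $v_{S_0}\neq 0$, and let $T:=\{1,\dots,n\}\setminus S_0$. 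The key computation is then to evaluate $x_T\cdot u$ summand by summand: contributions with $|S|<k_0$ vanish by the choice of $k_0$; contributions with $|S|>k_0$ vanish because $|T|+|S|>n$ forces $T\cap S\neq\emptyset$ and hence $x_T\wedge x_S=0$; and among contributions with $|S|=k_0$ only $S=S_0$ survives, giving $x_T\wedge x_{S_0}=\pm\, x_{\{1,\dots,n\}}$. Therefore $x_T\cdot u=\pm\, x_{\{1,\dots,n\}}\otimes v_{S_0}$ lies in $\Lambda^{\text{max}}(\mf g_{-1})\otimes V$ and is nonzero; since $x_T\in U(\mf g_{-1})$, it belongs to every $\mf g_{-1}$-submodule containing $u$, which settles the first claim.

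For the identification $(K(V))^{\mf g_{-1}}=\Lambda^{\text{max}}(\mf g_{-1})\otimes V$, the inclusion $\supseteq$ is immediate by degree, since $\mf g_{-1}\cdot\Lambda^{\text{max}}(\mf g_{-1})=0$. For the reverse inclusion, if $u\in (K(V))^{\mf g_{-1}}$ then the $\mf g_{-1}$-submodule generated by $u$ collapses to $\C u$, and the first part then forces $\C u\cap (\Lambda^{\text{max}}(\mf g_{-1})\otimes V)\neq 0$ whenever $u\neq 0$, so that $u\in\Lambda^{\text{max}}(\mf g_{-1})\otimes V$. The only step requiring any care is the degree bookkeeping that makes $x_T\cdot u$ collapse to a single nonzero top-degree summand; the signs coming from the exterior algebra are inessential, and I do not expect any real obstacle beyond this.
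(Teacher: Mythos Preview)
Your argument is correct and follows essentially the same approach as the paper: both proofs fix a monomial basis of $\Lambda(\mf g_{-1})$ and use that the complementary monomial $x_T$ sends $x_{S_0}$ to a nonzero top-degree element while annihilating all other monomials that could contribute in the relevant degree. Your version is slightly more explicit in isolating the minimal degree $k_0$ (which cleanly kills the lower-degree contributions and makes $x_T\cdot u$ land exactly in $\Lambda^{\text{max}}(\mf g_{-1})\otimes V$), whereas the paper states the same multiplication fact more tersely; but there is no genuine difference in strategy.
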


\begin{proof} We first fix a basis $B$ for $\mf g_{-1}$. 
Recall that $K(V) \cong \Lambda(\mf g_{-1}) \otimes V$ as a vector space. 
For a given $0\leq j\leq  \text{dim}\mf g_{-1}$, let $\{x_1,\ldots ,x_{\ell}\}$  be a basis of 
$\Lambda^{j}(\mf g_{-1})$ such that each $x_i$ is a monomial in $B$. Then, for each 
$x_i$, there is an element $y_k \in \Lambda(\mf g_{-1})$ such that 
$y_k\cdot x_i \in \delta_{ik}(\Lambda^{\text{max}}(\mf g_{-1})\backslash \{0\})$. 
From this it follows that, for a given non-zero element $x\in K(V)$, we have 
$\Lambda(\mf g_{-1})x \cap \big(\Lambda^{\text{max}}(\mf g_{-1})\otimes V\big) \neq 0$.
The claim follows.
\end{proof}

\begin{lem}\label{Kacsimplesoc} Let $V$ be a simple $\mf g_0$-supermodule. Then
every $\mf g$-submodule of $K(V)$ contains $\Lambda^{\emph{max}}(\mf g_{-1})\otimes V$, 
in particular,  $$\emph{Soc}(K(V)) = U \cdot(\Lambda^{\emph{max}}(\mf g_{-1})\otimes V)$$ 
and, moreover, $\emph{Soc}(K(V))$ is a simple module.
Similarly, $K'(V)$ has simple socle. In particular, both $K(V)$ and $K'(V)$ are indecomposable.
\end{lem}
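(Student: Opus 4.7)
The plan is to reduce everything to showing that the subspace $\Lambda^{\max}(\mathfrak{g}_{-1}) \otimes V$ is contained in every nonzero $\mathfrak{g}$-submodule of $K(V)$; the rest of the statement then follows formally. Given any nonzero $\mathfrak{g}$-submodule $N$ of $K(V)$, I view $N$ in particular as a $\mathfrak{g}_{-1}$-submodule and invoke Lemma \ref{newl1} to obtain that $N \cap (\Lambda^{\max}(\mathfrak{g}_{-1}) \otimes V)$ is nonzero. This intersection is also $\mathfrak{g}_0$-stable (since both $N$ and $\Lambda^{\max}(\mathfrak{g}_{-1}) \otimes V$ are $\mathfrak{g}_0$-stable), and the $\mathfrak{g}_0$-supermodule $\Lambda^{\max}(\mathfrak{g}_{-1}) \otimes V$ is simple, being the tensor product of the one-dimensional $\mathfrak{g}_0$-module $\Lambda^{\max}(\mathfrak{g}_{-1})$ with the simple $V$ — that is, a mere one-dimensional character twist of $V$. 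Hence the intersection must equal all of $\Lambda^{\max}(\mathfrak{g}_{-1}) \otimes V$, as required.

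Setting $S_0 := U \cdot (\Lambda^{\max}(\mathfrak{g}_{-1}) \otimes V)$, the previous paragraph shows that $S_0$ lies inside every nonzero $\mathfrak{g}$-submodule of $K(V)$. Applying the same principle inside $S_0$ itself shows $S_0$ has no proper nonzero $\mathfrak{g}$-submodule, hence is simple. Consequently $S_0$ is the unique minimal nonzero submodule of $K(V)$, so it coincides with $\mathrm{Soc}(K(V))$, which is therefore simple. The claim for $K'(V)$ is entirely parallel: the evident analogue of Lemma \ref{newl1} obtained by interchanging the roles of $\mathfrak{g}_{\pm 1}$, combined with the argument above, yields $\mathrm{Soc}(K'(V)) = U \cdot (\Lambda^{\max}(\mathfrak{g}_1) \otimes V)$, also simple.

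Indecomposability is formal from simplicity of the socle: in any nontrivial decomposition $K(V) = M_1 \oplus M_2$ with both $M_i$ nonzero, each summand would contain $S_0$ by the first step, forcing $S_0 \subseteq M_1 \cap M_2 = 0$, a contradiction. No step here looks technically hard; the only point requiring a little care is the simplicity of $\Lambda^{\max}(\mathfrak{g}_{-1}) \otimes V$ as a $\mathfrak{g}_0$-supermodule, which in turn relies on $\dim \Lambda^{\max}(\mathfrak{g}_{-1}) = 1$ (this is where the assumption $\dim \mathfrak{g}_{\one} < \infty$ enters, through finite-dimensionality of $\mathfrak{g}_{-1}$).
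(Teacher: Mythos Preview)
Your proposal is correct and follows essentially the same approach as the paper: both arguments combine Lemma~\ref{newl1} with the observation that $\Lambda^{\max}(\mathfrak{g}_{-1})\otimes V$ is a simple $\mathfrak{g}_0$-supermodule (the paper phrases this via the triviality of $\Lambda^{\max}(\mathfrak{g}_{-1})\otimes\Lambda^{\max}(\mathfrak{g}_{-1}^*)$, you via one-dimensionality of $\Lambda^{\max}(\mathfrak{g}_{-1})$, which amounts to the same thing). Your write-up simply unpacks in detail what the paper compresses into ``the claim follows directly from Lemma~\ref{newl1}.''
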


\begin{proof}
Since $\Lambda^{\text{max}}(\mf g_{-1})\otimes \Lambda^{\text{max}}(\mf g_{-1}^*)$ 
is isomorphic to the trivial $\mf g_0$-supermodule, we obtain that 
$\Lambda^{\text{max}}(\mf g_{-1})\otimes V$ is a simple $\mf g_\oo$-supermodule. 
Now the claim follows directly from Lemma~\ref{newl1}.
\end{proof}

The observation of Lemma~\ref{Kacsimplesoc} that Kac modules have simple socles is interesting 
and slightly unexpected. In would be natural to expect that Kac modules, being {\em induced}, have
simple tops. The latter statement, however, requires much more effort and we refer the
reader to Theorem \ref{MainThm1}. 

\begin{rem} \label{InvInKac} 
By Lemma~\ref{newl1}, for a simple $\mf g_0$-supermodule $V$, we have $$K(V)^{\mf g_{-1}} = \Lambda^{\text{max}}(\mf g_{-1})\otimes  V$$ and, similarly, that $K'(V)^{\mf g_{1}} = 
\Lambda^{\text{max}}(\mf g_1 )\otimes V$. Therefore  we have
\begin{align} 
&(\text{Soc}(K(V)))^{\mf g_{-1}}  = \Lambda^{\text{max}}(\mf g_{-1}) \otimes V, \label{eqn1}
\\ &(\text{Soc}(K'(V)))^{\mf g_{1}}  = \Lambda^{\text{max}}(\mf g_{1}) \otimes V.\label{eqn2}
\end{align} 
In particular, since $\Lambda^{\text{max}}(\mf g_i^*) \otimes \Lambda^{\text{max}}(\mf g_i)$ are trivial $\mf g_0$-supermodules, for $i=\pm 1$, we have 
$$\text{Soc}(K(V)) \cong \text{Soc}(K(W)) \quad\text{ if and only if }\quad V\cong W$$
and 
$$\text{Soc}(K'(V)) \cong \text{Soc}(K'(W)) \quad\text{ if and only if }\quad V\cong W.$$
\end{rem}
 
\begin{cor} \label{KacOppoKac} 
Let $V$ and $W$ be simple $\mf g_0$-supermodules. Then we have:
\begin{enumerate}[$($a$)$]
\item \label{KacOppoKac.1} Existence of a non-zero homomorphism from $K(V)$ to $K'(W)$
is equivalent to the condition $V \cong \Lambda^{\emph{max}}(\mf g_1) \otimes W$.
\item \label{KacOppoKac.2} Every non-zero $\mf g$-homomorphism $f:K(V)\rightarrow K'(W)$ satisfies $f(K(V)) =\emph{Soc}(K'(W))$.
\item \label{KacOppoKac.3} If $K(V)\cong K'(W)$, then both $K(V)$ and $K'(W)$ are simple supermodules. 
\end{enumerate}
\end{cor}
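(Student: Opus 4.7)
My plan is to reduce parts (a) and (b) to a Schur-type calculation by combining the adjunction \eqref{eqkac1} with the description of $\mathfrak{g}_1$-invariants from Remark~\ref{InvInKac}, and then to obtain (c) as an immediate consequence of (b).

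For (a), I would apply the adjunction \eqref{eqkac1} with $M = K'(W)$ to get
\begin{displaymath}
\mathrm{Hom}_{\mathfrak{g}}(K(V), K'(W)) \cong \mathrm{Hom}_{\mathfrak{g}_0}\bigl(V,\, K'(W)^{\mathfrak{g}_1}\bigr),
\end{displaymath}
and identify the target using Remark~\ref{InvInKac} as $\Lambda^{\text{max}}(\mathfrak{g}_1)\otimes W$. Since $\Lambda^{\text{max}}(\mathfrak{g}_1)$ is a one-dimensional $\mathfrak{g}_0$-module, this latter space is a simple $\mathfrak{g}_0$-supermodule, so Schur's lemma gives nonvanishing of the Hom-space if and only if $V\cong \Lambda^{\text{max}}(\mathfrak{g}_1)\otimes W$, establishing (a).

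For (b), I would unpack the adjunction explicitly: a nonzero $f: K(V)\to K'(W)$ corresponds to a nonzero $\tilde{f}: V\to \Lambda^{\text{max}}(\mathfrak{g}_1)\otimes W$, which by the simplicity argument above is an isomorphism onto this subspace, and $f$ is the unique $\mathfrak{g}$-linear extension sending $u\otimes v \mapsto u\cdot \tilde{f}(v)$. Hence $f(K(V)) = U(\mathfrak{g})\cdot\bigl(\Lambda^{\text{max}}(\mathfrak{g}_1)\otimes W\bigr)$; by \eqref{eqn2} the generating subspace lies in $\mathrm{Soc}(K'(W))$, so $f(K(V))$ is a nonzero $\mathfrak{g}$-submodule of $\mathrm{Soc}(K'(W))$, and since the latter is simple by Lemma~\ref{Kacsimplesoc} the two coincide.

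For (c), an isomorphism $f: K(V)\to K'(W)$ is surjective, so (b) forces $K'(W) = f(K(V)) = \mathrm{Soc}(K'(W))$, which is simple by Lemma~\ref{Kacsimplesoc}; hence $K(V)\cong K'(W)$ is simple too. I do not anticipate a real obstacle here: the essential work is already done in Lemma~\ref{newl1} and Lemma~\ref{Kacsimplesoc}, and the only point that warrants a line of verification is that tensoring with the one-dimensional $\mathfrak{g}_0$-module $\Lambda^{\text{max}}(\mathfrak{g}_1)$ preserves simplicity in the supermodule category, with the correct parity bookkeeping.
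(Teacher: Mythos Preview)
Your proposal is correct and follows essentially the same route as the paper: the adjunction \eqref{eqkac1} together with the identification $K'(W)^{\mf g_1}=\Lambda^{\text{max}}(\mf g_1)\otimes W$ from Remark~\ref{InvInKac} gives (a), the analogue of Lemma~\ref{Kacsimplesoc} for $K'(W)$ then yields (b), and (c) is immediate from (b). Your part (b) is slightly more explicit in unpacking the adjunction, but the argument is the same.
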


\begin{proof}
Using  \eqref{eqkac1} and \eqref{eqn2}, we have
\begin{align}
&\text{Hom}_{\mf g}(K(V), K'(W))  = \text{Hom}_{\mf g_\oo}(V, K'(W)^{\mf g_1}) = \text{Hom}_{\mf g_\oo}(V, \Lambda^{\text{max}}(\mf g_1) \otimes W).
\end{align} 
As $\Lambda^{\text{max}}(\mf g_1) \otimes W$ is a simple $\mf g_\oo$-supermodule, claim~\eqref{KacOppoKac.1} follows.
From claim~\eqref{KacOppoKac.1} and
an analogue of Lemma \ref{Kacsimplesoc} for $K'(W)$ we also obtain claim~\eqref{KacOppoKac.2}.
Finally, claim~\eqref{KacOppoKac.3} follows from claim~\eqref{KacOppoKac.2}.
\end{proof}  

We note that one can characterize all isomorphisms between Kac modules and opposite Kac modules
using Corollary \ref{KacOppoKac} and the criteria of simplicity of Kac modules given in 
Section~\ref{Sect::CriforSimpleKac}.

   \vskip 0.5cm
\subsection{Isomorphism between induced and coinduced modules}
   
The following theorem is an analog of \cite[Proposition 2.1.1(ii)]{Ge98}.
   
\begin{thm} \label{Thm::CoindisoInd} 
For a given simple $\mf g_0$-supermodule $V$, we have	
\begin{align}
&\emph{Coind}_{\mf g_{\geq 0}}^{\mf g}(\Lambda^{\emph{max}}(\mf g_{-1})\otimes V)\cong K(V), \label{Eq::CoindisoInd1} \\
&\emph{Coind}_{\mf g_{\geq 0}}^{\mf g}(V)\cong K(\Lambda^{\emph{max}}(\mf g_{-1}^*) \otimes V), \label{Eq::CoindisoInd2}
\end{align} up to parity change.
Similarly, we have 
\begin{align}
&\emph{Coind}_{\mf g_{\leq 0}}^{\mf g}(\Lambda^{\emph{max}}(\mf g_{1})\otimes V)\cong K'(V), \label{Eq::CoindisoInd3} \\
&\emph{Coind}_{\mf g_{\leq 0}}^{\mf g}(V)\cong K'(\Lambda^{\emph{max}}(\mf g_{1}^*) \otimes V), \label{Eq::CoindisoInd4}
\end{align} up to parity change.
\end{thm}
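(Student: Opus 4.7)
The plan is to reduce all four isomorphisms to a single Frobenius-type statement: for any $\mf g_0$-supermodule $V$ (extended trivially to $\mf g_{\geq 0}$),
\begin{equation*}
\text{Coind}_{\mf g_{\geq 0}}^{\mf g}(V)\;\cong\;\text{Ind}_{\mf g_{\geq 0}}^{\mf g}(\Lambda^{\text{max}}(\mf g_{-1}^*)\otimes V)
\end{equation*}
up to parity change. Granted this, \eqref{Eq::CoindisoInd2} is immediate, while \eqref{Eq::CoindisoInd1} follows by replacing $V$ with $\Lambda^{\text{max}}(\mf g_{-1})\otimes V$ and using that $\Lambda^{\text{max}}(\mf g_{-1}^*)\otimes \Lambda^{\text{max}}(\mf g_{-1})\cong \mathbb{C}$ as trivial $\mf g_0$-modules. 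The opposite statements \eqref{Eq::CoindisoInd3} and \eqref{Eq::CoindisoInd4} are obtained by the exact same argument applied to the parabolic $\mf g_{\leq 0}\subset \mf g$, interchanging the roles of $\mf g_{-1}$ and $\mf g_1$.

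To establish the key isomorphism I would mimic the strategy of \cite[Theorem 2.2]{BF} cited in Section~\ref{sectPre}, and of \cite[Proposition 2.1.1(ii)]{Ge98}, but now applied to the subalgebra $\mf g_{\geq 0}$ in place of $\mf g_\oo$. By Lemma~\ref{lem::CoindareIso} I am free to work with the usual coinduction $\text{coind}_{\mf g_{\geq 0}}^{\mf g}$. The PBW decomposition $U(\mf g)\cong \Lambda(\mf g_{-1})\otimes U(\mf g_{\geq 0})$ as right $U(\mf g_{\geq 0})$-modules furnishes a canonical linear identification $\text{coind}_{\mf g_{\geq 0}}^{\mf g}(V)\cong \Lambda(\mf g_{-1})^*\otimes V$. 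The $\mf g_0$-equivariant Poincar\'e-type duality in the exterior algebra,
\[
\Lambda^k(\mf g_{-1})^*\;\cong\;\Lambda^{d-k}(\mf g_{-1})\otimes \Lambda^{\text{max}}(\mf g_{-1}^*),\qquad d=\dim\mf g_{-1},
\]
then yields a $\mf g_0$-isomorphism between $\Lambda(\mf g_{-1})^*\otimes V$ and $\Lambda(\mf g_{-1})\otimes \Lambda^{\text{max}}(\mf g_{-1}^*)\otimes V$, and the latter is exactly $K(\Lambda^{\text{max}}(\mf g_{-1}^*)\otimes V)$ on the level of vector spaces.

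The final step is to upgrade this vector-space identification to an isomorphism of $\mf g$-supermodules. The $\mf g_0$-action matches by construction, and the $\mf g_{-1}$-action agrees on both sides because, after the PBW identification, it is in either case essentially left multiplication inside $\Lambda(\mf g_{-1})$. The main obstacle, and where I expect the real work to lie, is verifying compatibility with the $\mf g_1$-action: on the coinduced side it is dictated by how $x\in \mf g_1$ is rewritten through monomials in $\Lambda(\mf g_{-1})$ via the brackets $[\mf g_1,\mf g_{-1}]\subset \mf g_0$, whereas on the induced side, since $\mf g_1\cdot V=0$, the $\mf g_1$-action on $\Lambda(\mf g_{-1})\otimes \Lambda^{\text{max}}(\mf g_{-1}^*)\otimes V$ arises from those same brackets together with a contraction against $\Lambda^{\text{max}}(\mf g_{-1}^*)$. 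A careful PBW and super-sign bookkeeping should show these two actions agree; the passage between super and ordinary coinduction provided by Lemma~\ref{lem::CoindareIso} is exactly what absorbs the remaining sign discrepancies and is responsible for the ``up to parity change'' qualifier in the statement.
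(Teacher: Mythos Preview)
Your plan is correct in outline and would succeed, but it differs from the paper's argument in one essential respect: the paper does not construct the isomorphism explicitly. Instead of building a map via Poincar\'e duality and then checking that the $\mf g_1$-action matches, the paper simply invokes \cite[Theorem~2.2]{BF} as a black box to conclude that $\mathrm{Coind}_{\mf g_{\geq 0}}^{\mf g}(V)\cong K(W)$ for \emph{some} simple $\mf g_0$-supermodule $W$, and then identifies $W$ by a $\mathbb{Z}$-grading argument. Concretely, both $K$ and $\mathrm{Coind}_{\mf g_{\geq 0}}^{\mf g}$ admit standard $\mathbb{Z}$-graded lifts; since $K(W)$ lives in degrees $0,\dots,-\dim\mf g_{-1}$ with $W$ at the top, the module $W$ must coincide with the top graded piece of $\mathrm{Coind}_{\mf g_{\geq 0}}^{\mf g}(V)$, which is $\Lambda^{\text{max}}(\mf g_{-1}^*)\otimes V$ by \cite[Theorem~2.3]{BF}. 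This sidesteps entirely the ``main obstacle'' you correctly identified---the bookkeeping of the $\mf g_1$-action under Poincar\'e duality---at the cost of appealing to the Frobenius-extension machinery of \cite{BF} rather than re-deriving it. Your route has the advantage of producing an explicit map and not really needing simplicity of $V$; the paper's route is shorter and avoids the sign analysis altogether. One small correction: the ``up to parity change'' qualifier comes from the parity of the one-dimensional twist $\Lambda^{\text{max}}(\mf g_{-1}^*)$ (which depends on $\dim\mf g_{-1}\bmod 2$), not from the passage between super and ordinary coinduction in Lemma~\ref{lem::CoindareIso}.
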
 
   
\begin{proof} 
Note that \eqref{Eq::CoindisoInd1} and \eqref{Eq::CoindisoInd2} are equivalent due to 
the fact that $\Lambda^{\text{max}}(\mf g^*_{-1})\otimes \Lambda^{\text{max}}(\mf g_{-1})$ is 
isomorphic to the trivial $\mf g_0$-supermodule.

By \cite[Theorem~2.2]{BF}, $\mathrm{Coind}_{\mf g_{\geq 0}}^{\mf g}(V)$ is
isomorphic to $K(W)$, for some simple $\mf g_0$-supermodule $W$.
To identify $W$ it is convenient to look at the category $\mathfrak{g}\text{-}\mathrm{mod}^{\mathbb{Z}}$
of all $\mathbb{Z}$-graded $\mathfrak{g}$-supermodules. By construction both $K(X)$ and 
$\mathrm{Coind}_{\mf g_{\geq 0}}^{\mf g}(X)$ are $\mathbb{Z}$-graded, for any
$\mf g_0$-supermodule $X$ concentrated in a single $\mathbb{Z}$-degree.
Note that simple $\mf g_0$-supermodules are always concentrated in a single $\mathbb{Z}$-degree.

Abusing notation, we consider the {\em standard} graded lifts $K(X)$ and $\mathrm{Coind}_{\mf g_{\geq 0}}^{\mf g}(X)$  
in which $X$ is concentrated in degree $0$. Then all non-zero components of $K(X)$ have non-positive degrees
with $\Lambda^{\text{max}}(\mathfrak{g}_{-1}) \otimes X$ concentrated in degree $-\dim(\mathfrak{g}_{-1})$.
All non-zero components of $\mathrm{Coind}_{\mf g_{\geq 0}}^{\mf g}(X)$ have non-negative degrees
with $\dim(\mathfrak{g}_{-1})$ being the maximal non-zero degree.
Therefore $W$ must be isomorphic to the degree $\dim(\mathfrak{g}_{-1})$ component of 
$\mathrm{Coind}_{\mf g_{\geq 0}}^{\mf g}(V)$. By construction
(cf. \cite[Theorem~2.3]{BF}), the latter one is $\Lambda^{\mathrm{max}}(\mf g_{-1}^*) \otimes V$.

This proves isomorphisms \eqref{Eq::CoindisoInd1} and \eqref{Eq::CoindisoInd2}.
Isomorphisms \eqref{Eq::CoindisoInd3} and \eqref{Eq::CoindisoInd4} are proved in a similar way.
\end{proof}

\begin{cor} \label{Coindsimplesoc}
Let $V$ be a simple $\mf g_0$-supermodule.
Then both $\emph{Coind}_{\mf g_{\geq 0}}^{\mf{g}}(V)$ 
and $\emph{Coind}_{\mf{g}_{\leq 0}}^{\mf{g}}(V)$ have simple socle.
\end{cor}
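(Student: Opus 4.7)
The plan is to reduce the statement directly to Lemma \ref{Kacsimplesoc} via the identifications established in Theorem \ref{Thm::CoindisoInd}. No new module-theoretic input is needed beyond these two facts, so the proof should be essentially a one-line application.

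More precisely, I would first observe that $\Lambda^{\text{max}}(\mf g_{-1}^*) \otimes V$ is one-dimensional tensored with a simple $\mf g_0$-supermodule, hence is itself simple (as a $\mf g_0$-supermodule). Then by the isomorphism \eqref{Eq::CoindisoInd2} of Theorem \ref{Thm::CoindisoInd}, there is an isomorphism of $\mf g$-supermodules
\begin{displaymath}
\text{Coind}_{\mf g_{\geq 0}}^{\mf g}(V)\;\cong\; K\bigl(\Lambda^{\text{max}}(\mf g_{-1}^*)\otimes V\bigr)
\end{displaymath}
up to parity change. By Lemma \ref{Kacsimplesoc}, the Kac module $K\bigl(\Lambda^{\text{max}}(\mf g_{-1}^*)\otimes V\bigr)$ has simple socle. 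Since the parity change functor $\Pi$ is an autoequivalence of $\mf g\text{-smod}$, it preserves socles and simplicity, so $\text{Coind}_{\mf g_{\geq 0}}^{\mf g}(V)$ has simple socle as well.

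The argument for $\text{Coind}_{\mf g_{\leq 0}}^{\mf g}(V)$ is entirely analogous, using isomorphism \eqref{Eq::CoindisoInd4} to identify it (up to parity) with $K'\bigl(\Lambda^{\text{max}}(\mf g_{1}^*)\otimes V\bigr)$, together with the second half of Lemma \ref{Kacsimplesoc} asserting that opposite Kac modules of simples have simple socle.

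I do not anticipate any real obstacle: all the heavy lifting has been done in Lemma \ref{newl1}, Lemma \ref{Kacsimplesoc}, and Theorem \ref{Thm::CoindisoInd}. The only mildly subtle point to remember is that the isomorphisms in Theorem \ref{Thm::CoindisoInd} are stated only up to parity change, so one must explicitly note that $\Pi$ preserves the property of having simple socle before concluding. Once that is acknowledged, the corollary is immediate.
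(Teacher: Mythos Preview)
Your proposal is correct and follows exactly the approach in the paper: the authors' proof is a one-liner invoking Theorem~\ref{Thm::CoindisoInd} and Lemma~\ref{Kacsimplesoc}. Your added remarks about simplicity of $\Lambda^{\text{max}}(\mf g_{-1}^*)\otimes V$ and the behavior of socles under $\Pi$ are useful clarifications that the paper leaves implicit.
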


\begin{proof} 
The claim follows from Theorem \ref{Thm::CoindisoInd} 
and Lemma~\ref{Kacsimplesoc}.
\end{proof}

\section{Simple modules over Lie superalgebra of type I}\label{s4}
  
\subsection{Classification of simple $\mf g$-supermodules}
In this subsection we prove that Kac functor gives rise to a one-to-one correspondence between simple $\mf g$-supermodules and simple $\mf g_\oo$-su\-per\-mo\-du\-les. 
The main theorem in this subsection is the following.
  
\begin{thm} \label{MainThm1}
Let $\mf g$ be as in Subsection~\ref{sectPre.1}.
\begin{enumerate}[$($i$)$]
\item \label{MainThm1.1} For any simple $\mf g_\oo$-supermodule $V$, the module $K(V)$
has a unique maximal submodule. The unique simple top of $K(V)$ is denoted $L(V)$.
The correspondence 
\begin{equation}\label{Eqcortop}
V\mapsto L(V) 
\end{equation}
gives rise to a bijection between the set of isomorphism classes
of simple $\mf g_\oo$-supermodules and the set of isomorphism classes
of simple $\mf g$-supermodules.
\item \label{MainThm1.2} The correspondence, \begin{align} &V\mapsto \emph{Soc}\big(\emph{\coindp} (V)\big) \cong \emph{Soc}\big(K(V\otimes \Lambda^{\emph{max}}(\mf g_{-1}^*))\big), \label{Eqcorsoc} \end{align} gives rise to a 
bijection between the set of isomorphism classes of simple $\mf g_\oo$-supermodules 
and the set of isomorphism classes of simple $\mf g$-supermodules.
\end{enumerate}
\end{thm}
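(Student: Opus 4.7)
The plan is to prove (ii) first and then deduce (i) almost formally, by exploiting the observation that the $\mf g_1$- and $\mf g_{-1}$-invariants of any simple $\mf g$-supermodule are in fact \emph{simple} $\mf g_0$-modules, after which Schur's lemma takes care of the uniqueness in (i).

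\emph{Part (ii).} By Theorem~\ref{Thm::CoindisoInd}, $\mathrm{Coind}_{\mf g_{\geq 0}}^{\mf g}(V) \cong K(V \otimes \Lambda^{\max}(\mf g_{-1}^*))$, so Remark~\ref{InvInKac} already gives injectivity of $V \mapsto \mathrm{Soc}(\mathrm{Coind}(V))$: the socle of a Kac module determines its input. For surjectivity, let $L$ be a simple $\mf g$-supermodule. The adjunction of Lemma~\ref{supercoind}, together with the trivial $\mf g_1$-action on $V$, reads
\[
\mathrm{Hom}_{\mf g}(L, \mathrm{Coind}_{\mf g_{\geq 0}}^{\mf g}(V)) \cong \mathrm{Hom}_{\mf g_0}(L/\mf g_1 L, V),
\]
so I must find a simple $\mf g_0$-quotient of $L/\mf g_1 L$. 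Two observations suffice: first, $L/\mf g_1 L \neq 0$, since $\Lambda^{k}(\mf g_1) = 0$ for $k > \dim \mf g_1$ forces $L = \mf g_1^{k} L = 0$ whenever $L = \mf g_1 L$; second, $L/\mf g_1 L$ is finitely generated over $U(\mf g_0)$, since a cyclic generator of $L$ combined with the PBW factorization $U(\mf g) \cong \Lambda(\mf g_1)\, U(\mf g_0)\, \Lambda(\mf g_{-1})$ exhibits it as generated over $U(\mf g_0)$ by finitely many elements indexed by a basis of $\Lambda(\mf g_{-1})$. Finitely generated modules over any ring admit maximal submodules by Zorn's lemma, so we obtain a simple $\mf g_0$-quotient $V$; the resulting nonzero map $L \to \mathrm{Coind}(V)$ is necessarily injective (simplicity of $L$) and lands in the simple socle furnished by Corollary~\ref{Coindsimplesoc}, so $L \cong \mathrm{Soc}(\mathrm{Coind}(V))$.

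\emph{Simple-module invariants.} Running the same argument with $\mf g_{\leq 0}$ in place of $\mf g_{\geq 0}$ yields the symmetric bijection $W \mapsto \mathrm{Soc}(\mathrm{Coind}_{\mf g_{\leq 0}}^{\mf g}(W))$. Combining the two bijections with formulas~\eqref{eqn1} and~\eqref{eqn2} of Remark~\ref{InvInKac}, it follows that for every simple $\mf g$-supermodule $L$ the spaces $L^{\mf g_{-1}}$ and $L^{\mf g_1}$ are \emph{simple} $\mf g_0$-supermodules: explicitly, if $L \cong \mathrm{Soc}(K(V \otimes \Lambda^{\max}(\mf g_{-1}^*)))$ and $L \cong \mathrm{Soc}(K'(W \otimes \Lambda^{\max}(\mf g_1^*)))$, then $L^{\mf g_{-1}} \cong V$ and $L^{\mf g_1} \cong W$.

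\emph{Part (i).} For any simple $\mf g_0$-supermodule $V$ and any simple $\mf g$-supermodule $L$, the Kac adjunction~\eqref{eqkac1} gives $\mathrm{Hom}_{\mf g}(K(V), L) \cong \mathrm{Hom}_{\mf g_0}(V, L^{\mf g_1})$, and both $V$ and $L^{\mf g_1}$ are simple $\mf g_0$-modules by the previous paragraph. Schur's lemma then forces this Hom to vanish unless $V \cong L^{\mf g_1}$, so the only candidate simple quotient of $K(V)$ is the unique simple $\mf g$-supermodule $L(V)$ with $L(V)^{\mf g_1} \cong V$; existence of $L(V)$ is witnessed by $\mathrm{Soc}(K'(\Lambda^{\max}(\mf g_1^*) \otimes V))$ via Corollary~\ref{KacOppoKac}. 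Any two nonzero maps $K(V) \to L(V)$ differ by an automorphism of $L(V)$ (again by Schur) and hence share their kernel, so $K(V)$ has a unique maximal submodule and simple top $L(V)$. The correspondence $V \mapsto L(V)$ is then a bijection with inverse $L \mapsto L^{\mf g_1}$. The main technical obstacle is the finite-generation/Zorn step in the surjectivity of part~(ii); once that is in hand, the combination of Remark~\ref{InvInKac} with Schur's lemma handles everything else automatically.
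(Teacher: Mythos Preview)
Your proof is correct. For part~(ii) your argument is essentially the paper's: your surjectivity step (find a simple $\mf g_0$-quotient of $L/\mf g_1 L$ via finite generation and Zorn) is a rephrasing of the paper's Lemma~\ref{lem::SocleInCoind} and Corollary~\ref{SocIndCoind}, and injectivity via Remark~\ref{InvInKac} is identical. For part~(i), however, your route is genuinely cleaner than the paper's. The paper establishes the unique maximal submodule in Lemma~\ref{nlem5} along the same adjunction--plus--Schur lines you use, but then proves injectivity and surjectivity of $V\mapsto L(V)$ by equipping every simple $\mf g$-supermodule with a $\mathbb Z$-grading (inherited from a Kac module) and comparing top graded components. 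You bypass this entirely by extracting from part~(ii) and its $K'$-analogue the single structural fact that $L^{\mf g_1}$ is a \emph{simple} $\mf g_0$-module for every simple $L$, which immediately supplies the explicit inverse $L\mapsto L^{\mf g_1}$ to $V\mapsto L(V)$. This is more conceptual and avoids the grading bookkeeping; the paper's $\mathbb Z$-grading argument is, in hindsight, a disguised version of the same observation (the top graded piece of $L$ \emph{is} $L^{\mf g_1}$). One small point: your phrase ``any two nonzero maps $K(V)\to L(V)$ differ by an automorphism of $L(V)$'' is really the assertion $\dim\mathrm{Hom}_{\mf g_0}(V,V)=1$, which needs Dixmier's version of Schur's lemma (available here since modules are finitely generated over $\mathbb C$); the paper uses exactly the same fact in Lemma~\ref{nlem5}, so this is not a gap but worth stating explicitly.
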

  
We need some preparation before we can prove Theorem~\ref{MainThm1}.  
  
\begin{lem} \label{lem::SocleInCoind}
Let $M$ be a simple $\mf g$-supermodule. Then there is a simple $\mf g_0$-supermodule $N$ 
such that $$M \hookrightarrow \emph{Coind}_{\mf{g}_{\geq 0}}^{\mathfrak{g}}(N).$$ 
\end{lem}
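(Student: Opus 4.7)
The plan is to exploit the adjunction between restriction and (super) coinduction supplied by Lemma~\ref{supercoind} (together with Lemmas~\ref{usualcoind} and~\ref{lem::CoindareIso}): a $\mf g$-homomorphism $M\to \mathrm{Coind}_{\mf g_{\geq 0}}^{\mf g}(N)$ corresponds to a $\mf g_{\geq 0}$-homomorphism $\mathrm{Res}^{\mf g}_{\mf g_{\geq 0}}(M)\to N$, where $N$ is viewed as a $\mf g_{\geq 0}$-module by letting $\mf g_1$ act trivially. Moreover, under the explicit formula $\hat\phi(v)(y)=\pm\phi(yv)$, a nonzero $\phi$ produces a nonzero $\hat\phi$ (evaluate at $y=1$), and simplicity of $M$ then forces $\hat\phi$ to be injective. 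Thus it suffices to find a simple $\mf g_\oo$-supermodule $N$ that arises as a quotient of $\mathrm{Res}^{\mf g}_{\mf g_{\geq 0}}(M)$ with $\mf g_1$ acting trivially on the quotient; the natural candidate is any simple $\mf g_\oo$-quotient of $M/\mf g_1 M$.

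The first thing to verify is that $M/\mf g_1 M\neq 0$. The subalgebra of $U$ generated by $\mf g_1$ is $\Lambda(\mf g_1)$, because $\mf g_1$ is purely odd with $[\mf g_1,\mf g_1]=0$, and this exterior algebra is finite-dimensional by the hypothesis $\dim\mf g_{\one}<\infty$. If one had $\mf g_1 M=M$, iteration would give $M=\mf g_1^k M$ for every $k\geq 0$, and taking $k>\dim\mf g_1$ would force $M=\Lambda^k(\mf g_1)\cdot M=0$, contradicting the simplicity of $M$. Note that this step is really where the finite-dimensionality hypothesis on $\mf g_{\one}$ is used.

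To produce a simple quotient of $M/\mf g_1 M$, I would invoke that $U$ is a finite extension of $U_\oo$ with basis $\Lambda(\mf g_{\one})$ (recalled in Subsection~\ref{sectPre.1}), so that the finitely generated $\mf g$-module $M$ is finitely generated over $\mf g_\oo$ as well, and hence so is $M/\mf g_1 M$. A standard Zorn's lemma argument produces a maximal $\mf g_\oo$-submodule, and the corresponding quotient $N$ is a simple $\mf g_\oo$-supermodule. Feeding the composite $\mf g_{\geq 0}$-surjection $\mathrm{Res}^{\mf g}_{\mf g_{\geq 0}}(M)\twoheadrightarrow M/\mf g_1 M\twoheadrightarrow N$ into the adjunction yields a nonzero, hence injective, $\mf g$-homomorphism $M\hookrightarrow \mathrm{Coind}_{\mf g_{\geq 0}}^{\mf g}(N)$, as desired.

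The only real obstacle is the nonvanishing of $M/\mf g_1 M$; once the finiteness of $\Lambda(\mf g_1)$ is exploited, everything else is a clean application of Zorn's lemma and the adjunction. The remaining statements of the lemma (simplicity of $N$, and the fact that $\mf g_1$ acts trivially on $N$) are automatic from the construction.
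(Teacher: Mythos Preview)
Your proof is correct and uses the same key ingredients as the paper's: the adjunction of Lemma~\ref{supercoind}, finite-dimensionality of $\Lambda(\mf g_1)$, and finite generation to obtain a simple quotient. The only difference is the order of operations---the paper first takes a simple $\mf g_{\geq 0}$-quotient $N$ of $\mathrm{Res}^{\mf g}_{\mf g_{\geq 0}}(M)$ (using that $U(\mf g)$ is finite over $U(\mf g_{\geq 0})$) and then observes that $N^{\mf g_1}\neq 0$ forces $N=N^{\mf g_1}$, whereas you first pass to $M/\mf g_1 M$ and then take a simple $\mf g_\oo$-quotient---but the two routes are essentially dual and equally short.
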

  
\begin{proof}
As $U(\mf g)$ is finitely-generated over $U(\mf g_{\geq 0})$,  we have 
$$0\neq \text{Hom}_{\mf g_{\geq 0}}(\text{Res}_{\mf g_{\geq 0}}^{\mf g}(M), N) = \text{Hom}_{\mf g}(M, \text{Coind}_{\mf{g}_{\geq 0}}^{\mathfrak{g}}(N)),$$
for some simple $\mf g_{\geq 0}$-supermodule $N$. 
Since $U(\mf g_1)=\Lambda (\mf g_1)$ is finite dimensional, we have $N^{\mf g_1}\neq 0$. 
Also, $N^{\mf g_1}$ is a $\mf g_{\geq 0}$-submodule and therefore $N =N^{\mf g_1}$. The claim follows. 
\end{proof}

\begin{cor} \label{SocIndCoind}
For a simple $\mf g$-supermodule $M$, there exist simple $\mf g_0$-supermodules 
$V_1$, $V_2$, $V_3$, $V_4$ such that $$M = \emph{Soc}(K(V_1))= \emph{Soc}(K'(V_2))=
\emph{Soc}(\emph{Coind}_{\mf g_{\geq 0}}^{\mf g}(V_3)) = 
\emph{Soc}(\emph{Coind}_{\mf g_{\leq 0}}^{\mf g}(V_4)).$$ 
\end{cor}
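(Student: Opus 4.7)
The plan is to deduce all four identifications by combining the previous lemmas, starting from the coinduced side where an embedding is most easily obtained, and then transferring the statement to the Kac side via Theorem~\ref{Thm::CoindisoInd}.

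First I would take care of $V_{3}$. By Lemma~\ref{lem::SocleInCoind}, there is a simple $\mf g_{0}$-supermodule $V_{3}$ together with a non-zero $\mf g$-homomorphism $M\hookrightarrow \mathrm{Coind}_{\mf g_{\geq 0}}^{\mf g}(V_{3})$; since $M$ is simple, this map is injective. By Corollary~\ref{Coindsimplesoc} the module $\mathrm{Coind}_{\mf g_{\geq 0}}^{\mf g}(V_{3})$ has simple socle, so the image of $M$ must coincide with this socle, giving $M = \mathrm{Soc}(\mathrm{Coind}_{\mf g_{\geq 0}}^{\mf g}(V_{3}))$. An identical argument, with the roles of $\mf g_{\geq 0}$ and $\mf g_{\leq 0}$ (equivalently, of $\mf g_{1}$ and $\mf g_{-1}$) interchanged, produces a simple $\mf g_{0}$-supermodule $V_{4}$ with $M = \mathrm{Soc}(\mathrm{Coind}_{\mf g_{\leq 0}}^{\mf g}(V_{4}))$.

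Next I would transfer these to the induced side. Using the isomorphism \eqref{Eq::CoindisoInd2} in Theorem~\ref{Thm::CoindisoInd}, we have
\begin{equation*}
\mathrm{Coind}_{\mf g_{\geq 0}}^{\mf g}(V_{3}) \;\cong\; K\bigl(V_{3}\otimes \Lambda^{\mathrm{max}}(\mf g_{-1}^{*})\bigr)
\end{equation*}
up to parity change. Setting $V_{1} := V_{3}\otimes \Lambda^{\mathrm{max}}(\mf g_{-1}^{*})$ (or its parity shift $\Pi V_{1}$, depending on the parity correction), $V_{1}$ is a simple $\mf g_{0}$-supermodule, and applying the socle functor to both sides yields $M = \mathrm{Soc}(K(V_{1}))$. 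The analogous use of \eqref{Eq::CoindisoInd4} with $V_{2} := V_{4}\otimes \Lambda^{\mathrm{max}}(\mf g_{1}^{*})$ (again up to possible parity shift) gives $M = \mathrm{Soc}(K'(V_{2}))$.

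There is no substantial obstacle: the statement is essentially a book-keeping consequence of Lemma~\ref{lem::SocleInCoind}, Corollary~\ref{Coindsimplesoc} and Theorem~\ref{Thm::CoindisoInd}. The only point requiring care is the parity change in Theorem~\ref{Thm::CoindisoInd}; this is absorbed by replacing $V_{1}$ or $V_{2}$ by its $\Pi$-twist if necessary, which is harmless since the statement only asserts the existence of simple $\mf g_{0}$-supermodules $V_{i}$.
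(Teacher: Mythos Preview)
Your proof is correct and follows essentially the same route as the paper: the paper also deduces the corollary directly from Lemma~\ref{lem::SocleInCoind}, Lemma~\ref{Kacsimplesoc} (of which Corollary~\ref{Coindsimplesoc} is an immediate consequence), and Theorem~\ref{Thm::CoindisoInd}. Your handling of the parity issue is appropriate, and your argument is in fact a more explicit unpacking of the paper's one-line justification.
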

  
\begin{proof} 
Since  $\Lambda^{\text{max}}(\mf g_{i}^*)\otimes\Lambda^{\text{max}}(\mf g_{i})$ 
is isomorphic to the trivial $\mf g_0$-supermodule, for $i=\pm 1$, the claim follows 
directly from Lemmata~\ref{lem::SocleInCoind} and \ref{Kacsimplesoc} and Theorem~\ref{Thm::CoindisoInd}.
\end{proof}

\begin{lem}\label{nlem5}
Under the assumptions of Theorem~\ref{MainThm1}, the module  $K(V)$ has a unique maximal submodule.
\end{lem}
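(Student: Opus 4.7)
The plan is to identify the maximal submodules of $K(V)$ with the kernels of non-zero $\mf g$-homomorphisms into a single opposite Kac module $K'(W)$, and then to show that this Hom space is one-dimensional, so that such a kernel is uniquely determined.

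First I would argue as follows. Suppose $N \subset K(V)$ is any maximal submodule, so $L := K(V)/N$ is simple. By Corollary~\ref{SocIndCoind}, $L$ embeds into $K'(W_L)$ for some simple $\mf g_\oo$-supermodule $W_L$, and the composition $f: K(V) \twoheadrightarrow L \hookrightarrow K'(W_L)$ is a non-zero $\mf g$-homomorphism with $\ker(f) = N$. Corollary~\ref{KacOppoKac}\eqref{KacOppoKac.1} then forces $V \cong \Lambda^{\mathrm{max}}(\mf g_1) \otimes W_L$, i.e.\ $W_L \cong \Lambda^{\mathrm{max}}(\mf g_1^*) \otimes V =: W$ depends only on $V$. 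Consequently every maximal submodule of $K(V)$ is the kernel of some non-zero element of $\mathrm{Hom}_{\mf g}(K(V), K'(W))$.

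Next I would compute this Hom space. By the Kac adjunction \eqref{eqkac1} and the identity $K'(W)^{\mf g_1} = \Lambda^{\mathrm{max}}(\mf g_1) \otimes W$ from Remark~\ref{InvInKac} (combined with $\Lambda^{\mathrm{max}}(\mf g_1) \otimes \Lambda^{\mathrm{max}}(\mf g_1^*)$ being the trivial $\mf g_\oo$-supermodule),
\begin{displaymath}
\mathrm{Hom}_{\mf g}(K(V), K'(W)) \cong \mathrm{Hom}_{\mf g_\oo}(V, \Lambda^{\mathrm{max}}(\mf g_1) \otimes W) \cong \mathrm{End}_{\mf g_\oo}(V).
\end{displaymath}
Since this paper works with even homomorphisms only, super-Schur applied to the simple $\mf g_\oo$-supermodule $V$ gives $\mathrm{End}_{\mf g_\oo}(V) = \mathbb{C}$. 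So the Hom space is one-dimensional; in particular, all of its non-zero elements are scalar multiples of one another and share the same kernel. Therefore the maximal submodule $N$ identified above is unique. Existence is automatic as well: picking any non-zero $f$ in this one-dimensional Hom space, Corollary~\ref{KacOppoKac}\eqref{KacOppoKac.2} gives $f(K(V)) = \mathrm{Soc}(K'(W))$, which is simple, so $\ker f$ is maximal.

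The main conceptual step is the correct choice of the target $K'(W)$. Corollary~\ref{KacOppoKac}\eqref{KacOppoKac.1} pins down $W$ uniquely in terms of $V$, so that \emph{all} potential simple quotients of $K(V)$ funnel through the same one-dimensional Hom space; without this rigidity, different maximal submodules could a priori produce simple quotients sitting in genuinely different opposite Kac modules, and uniqueness would not follow from a Schur-type dimension count.
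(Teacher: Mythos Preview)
Your proof is correct and follows essentially the same approach as the paper. The only cosmetic difference is that you invoke Corollary~\ref{KacOppoKac}\eqref{KacOppoKac.1}--\eqref{KacOppoKac.2} directly, whereas the paper partly reproves these facts inline; your added remark on existence via Corollary~\ref{KacOppoKac}\eqref{KacOppoKac.2} is implicit in the paper's computation of the one-dimensional Hom space but is nice to make explicit.
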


\begin{proof}
We first show that all simple quotients of $K(V)$ are isomorphic. Let $f: K(V) \twoheadrightarrow L$ 
be such that $L$ is simple. By Corollary~\ref{SocIndCoind} there is a simple $\mf g_\oo$-supermodule $W$ 
such that $L\cong \text{Soc}(K'(W))$. We would like to show that $W$ is uniquely determined by $V$. 
By Remark~\ref{InvInKac}, we have $f(V)\subseteq \Lambda^{\text{max}}(\mf g_{1})\otimes W$ since 
$V \subseteq  K(V)^{\mf g_1}$. Now $\Lambda^{\text{max}}(\mf g_{1})\otimes W$ is simple since 
$\Lambda^{\text{max}}(\mf g^*_{1})\otimes \Lambda^{\text{max}}(\mf g_{1})$ is isomorphic 
to the trivial $\mf g_0$-supermodule. Thus 
$f|_V: V \cong \Lambda^{\text{max}}(\mf g_{1})\otimes W$, and hence
$W\cong \Lambda^{\text{max}}(\mf g_{1}^*)\otimes  V$ as $\mf g_\oo$-supermodules. 
In other words, every simple quotient of $K(V)$ is isomorphic to the simple socle of 
$K'(\Lambda^{\text{max}}(\mf g_{1}^*)\otimes  V)$.

Using adjunction and Schur's lemma, we also have 
\begin{displaymath}
\dim\mathrm{Hom}_{\mathfrak{g}}(K(V),K'(W))=
\dim\mathrm{Hom}_{\mathfrak{g}_{\geq 0}}(V,\Lambda^{\text{max}}(\mf g_{1})\otimes W)=
\dim\mathrm{Hom}_{\mathfrak{g}_{\geq 0}}(V,V)=1.
\end{displaymath}
The claim of the lemma follows.
\end{proof}

\begin{proof}[Proof of Theorem \ref{MainThm1}] 
We start by proving claim~\eqref{MainThm1.2}. 
Recall that $\Lambda^{\text{max}}(\mf g_{i}^*)\otimes \Lambda^{\text{max}}(\mf g_{i})$ is 
isomorphic to the trivial $\mf g_0$-supermodule, for $i=\pm 1$. 
By Theorem~\ref{Thm::CoindisoInd} and  Corollaries~\ref{Coindsimplesoc} and \ref{SocIndCoind}, 
the mapping \eqref{Eqcorsoc} is well-defined and surjective. 
Also, from Theorem~\ref{Thm::CoindisoInd} and Remark~\ref{InvInKac} it follows
that \eqref{Eqcorsoc} is injective. This proves claim~\eqref{MainThm1.2}.

Now we prove claim~\eqref{MainThm1.1}. By Lemma~\ref{nlem5}, the correspondence
\eqref{Eqcortop} is well-defined. Note that, by Lemma~\ref{Kacsimplesoc}, for any
simple $\mf g_0$-supermodule $X$, the socle of every $K(X)$ is $\mathbb{Z}$-graded.
From claim~\eqref{MainThm1.2} it thus follows that all simple $\mf g$-supermodules
are $\mathbb{Z}$-gradeable. In particular, $L(X)$ is $\mathbb{Z}$-gradeable.
We fix a $\mathbb{Z}$-grading on $L(X)$ such that the top non-zero graded component is
of degree $0$.
For $i\in\mathbb{Z}$, denote by $\langle i\rangle$ the shift of grading functor on $\mathfrak{g}\text{-}\mathrm{smod}^\mathbb{Z}$ which maps homogeneous elements of degree
$j$ to homogeneous elements of degree $j-i$. If $X$ and $Y$ are simple $\mf g_0$-supermodules, 
then  the only chance for $\mathrm{Hom}_{\mathfrak{g}\text{-}\mathrm{smod}^\mathbb{Z}}(L(X),L(Y)\langle i\rangle)$
to be non-zero is when $i=0$ (for any non-zero homomorphism must be an isomorphism and, unless $i=0$, the
top non-zero graded components of $L(X)$ and $L(Y)$ would not match). 
However, as the degree zero component of $L(X)$ is isomorphic to $X$
and the degree zero component of $L(Y)$ is isomorphic to $Y$, in the case $X\not\cong Y$ we have
\begin{displaymath}
 \mathrm{Hom}_{\mathfrak{g}\text{-}\mathrm{smod}^\mathbb{Z}}(L(X),L(Y)\langle i\rangle)=0
\end{displaymath}
since $\mathrm{Hom}_{\mathfrak{g}_0}(X,Y)=0$. This shows that 
the correspondence \eqref{Eqcortop} is injective.

Finally, let $L$ be a simple $\mf g$-supermodule. We can consider it as a 
$\mathbb{Z}$-graded supermodule such that all non-zero components have non-positive degrees
and the degree zero component $X$ is non-zero. Then $X$ is a $\mf g_{\geq 0}$-supermodule.
Due to exactness of $\mathrm{Ind}_{\mf g_{\geq 0}}^{\mf g}$, simplicity of $L$ implies
simplicity of $X$. As ${\mf g_{1}}X=0$, we even get that $X$ is a simple $\mf g_{0}$-supermodule.
By adjunction, there is a non-zero homomorphism from $K(X)$ to $L$. This shows that 
the correspondence \eqref{Eqcortop} is surjective, completing the proof.
\end{proof}

We recall the parity change functor $\Pi$ defined in Section \ref{sectPre} 
and conclude this subsection with the following corollary.

\begin{cor}
We have $L(V)\not \cong \Pi L(V)$, for each simple $\mf g_0$-supermodule $V$.
\end{cor}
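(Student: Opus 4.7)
The plan is to reduce the statement about $L(V)$ to a statement about the $\mathfrak{g}_0$-input $V$ via the bijection in Theorem~\ref{MainThm1}\eqref{MainThm1.1}. First, I would establish that the parity change functor $\Pi$ commutes with the Kac functor, i.e.
\begin{equation*}
K(\Pi V)\cong \Pi K(V)
\end{equation*}
as $\mathfrak{g}$-supermodules. This is immediate from the definition $K(V)=U(\mathfrak{g})\otimes_{U(\mathfrak{g}_{\geq 0})}V$, since tensoring on the right with a parity-shifted module shifts the parity of the whole induced module. Passing to unique simple tops, this gives $L(\Pi V)\cong \Pi L(V)$.

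Next, I would apply the injectivity of the correspondence $V\mapsto L(V)$ from Theorem~\ref{MainThm1}\eqref{MainThm1.1}: the isomorphism $L(V)\cong \Pi L(V)\cong L(\Pi V)$ would force $V\cong \Pi V$ as $\mathfrak{g}_0$-supermodules. Thus it suffices to exhibit, for each simple $\mathfrak{g}_0$-supermodule $V$, a parity-based obstruction to $V\cong \Pi V$.

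The key point is that $\mathfrak{g}_0=\mathfrak{g}_{\oo}$ acts with parity-preserving operators (being purely even), so both $V_{\oo}$ and $V_{\one}$ are $\mathfrak{g}_0$-submodules of $V$. Simplicity of $V$ as a supermodule therefore forces one of them to be zero; that is, $V$ is concentrated in a single parity. Then $\Pi V$ is concentrated in the opposite parity, and since all homomorphisms in our category are required to be homogeneous of degree zero, any nonzero morphism $V\to \Pi V$ would have to preserve parity, which is impossible. Hence $V\not\cong \Pi V$, and consequently $L(V)\not\cong \Pi L(V)$.

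The steps are all short; the only mild subtlety is confirming that $\Pi$ genuinely commutes with $K$ (one must check the left $U(\mathfrak{g})$-action is unaffected by shifting the parity on the right tensor factor), but this is a routine verification rather than a genuine obstacle. No step appears to be a serious difficulty, as the substantive content has already been packaged into Theorem~\ref{MainThm1}\eqref{MainThm1.1}.
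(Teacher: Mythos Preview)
Your proof is correct, and the final reduction to $V\not\cong\Pi V$ (using that a simple $\mathfrak{g}_0$-supermodule is concentrated in a single parity) is exactly the contradiction the paper also reaches. The route to that reduction is, however, slightly different. The paper does not pass through the commutation $K(\Pi V)\cong\Pi K(V)$ or invoke the injectivity of $V\mapsto L(V)$; instead it argues directly that $L(V)^{\mathfrak{g}_1}\cong V$ as $\mathfrak{g}_0$-supermodules (via Remark~\ref{InvInKac} and Corollary~\ref{SocIndCoind}), so an isomorphism $f:L(V)\to\Pi L(V)$ restricts on $\mathfrak{g}_1$-invariants to an isomorphism $V\cong\Pi V$. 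Your functorial argument is a clean alternative that packages the same identification into Theorem~\ref{MainThm1}\eqref{MainThm1.1}; the paper's invariants argument is a touch more self-contained since it does not rely on the full bijection. Either way the content is the same.
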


\begin{proof} 
Suppose that there is an isomorphism $f: L(V) \cong \Pi L(V)$. 
By Remark \ref{InvInKac} and Corollary \ref{SocIndCoind}
we may note that $L(V)^{\mf g_1} \cong V$,  $\Pi L(V)^{\mf g_1} \cong \Pi V$,
as $\mf g_0$-supermodules. Consequently, we have $f|_V:V \cong \Pi V$, a contradiction.
\end{proof}

  
  

 \subsection{Examples: simple supermodules over classical Lie superalgebra of type I}


In this subsection, we consider the example of the {\em classical Lie superalgebras of type I} 
(see, e.g., \cite[Chapter 2, 3]{Mu12} and \cite[Section 1.1]{ChWa12}), they are:
\begin{align} 
&(\text{Type \bf A}): ~\mf{gl}(m|n), \mf{sl}(m|n) \text{ and } \mf{sl}(n|n)/\mathbb{C}I_{n|n};\label{cLIa} \\
&(\text{Type \bf C}): ~\mf{osp}(2|2n); \label{cLIc} \\
&(\text{Strange series}): ~\mf{p}(n)\text{ and }  \mf{p}'(n):=[\mf{p}(n), \mf{p}(n)]; \label{cLIp}
\end{align} 
where $m>n \geq 1$ are integers.

Each of these classical Lie superalgebra of type I admits a $\mathbb{Z}_2$-compatible $\mathbb{Z}$-grading 
\begin{align}
&\mathfrak{g}=\mathfrak{g}_{-1} \oplus \mathfrak{g}_0 \oplus
\mathfrak{g}_{+1}. \label{Zgrad}
\end{align}
As an application, we may conclude that Theorem \ref{MainThm1} holds for all classical Lie superalgebra of type I.

\subsubsection{General linear Lie superalgebra $\mf{gl}(m|n)$} \label{s4.2.1}
Let $\C^{m|n}$ be the standard complex superspace of (graded) dimension $(m|n)$. 
With respect to a fixed ordered homogeneous basis in $\C^{m|n}$, the 
 {\em general linear Lie superalgebra} $\g=\gl(m|n) = \mf{gl}(\C^{m|n})$ can be realized as
 the space of $(m+n)\times (m+n)$ matrices over $\C$. The even subalgebra 
 $\mf g_\oo$ of $\g$ is isomorphic to $\mf{gl}(m)\oplus \mf{gl}(n)$. 
 We let $e_{ij}$, $1\le i,j\le m+n$, denote the
 $(i,j)$-th matrix unit.
 The {\em Cartan subalgebra} of $\g$ consisting of all diagonal matrices is denoted by
 $\h=\h_{m|n} =\text{span} \{e_{ii}|1\le i\le m+n\}$. We
 denote by $\{\ve_i|1\le i\le m+n\}$ the basis in
 $\h^*=\h_{m|n}^*$ which is dual to $\{e_{ii}|1\le i\le m+n\}$. Let 
 $\Phi=\{\ve_i-\ve_j|1\le i\neq j\le m+n\}$ be the root system of $\g$ and 
 denote by $\Phi_\oo$ and $\Phi_{\bar 1}$ the set of even roots and the set of odd roots, respectively. 
 We also denote the set of positive roots
 by $\Phi^+:= \{\ve_i-\ve_j|1\le i< j\le m+n\}$ and the set of negative roots by $\Phi^-: = - \Phi^+$.
The Weyl group $W=\mf{S}_m\times\mf{S}_n$ acts on $\mf h^*$ in the obvious way.

 For each root $\alpha \in \Phi$, let $\mf g_{\alpha}$ be the corresponding root space. We have the
 triangular decomposition  \begin{align} \label{Eq::TriDecom} &\g =\n^- \oplus \h \oplus
 \n,\end{align} where $\displaystyle \mf{n}=\bigoplus_{\alpha\in\Phi^+}\g_\alpha$ and
 $\displaystyle \mf{n}^-=\bigoplus_{\alpha\in-\Phi^+}\g_\alpha$. 
 The subalgebra $\mf b:=\mf h \oplus \mf n$ of upper triangular matrices 
 is called the {\em (standard) Borel subalgebra}. Also, we let 
 $\mf b_\oo:= (\mf b\cap \g_\oo)\subset \mf g_\oo$ be the standard Borel subalgebra of $\mf g_\oo$.
 
 The $\mathbb{Z}_2$-compatible $\mathbb{Z}$-grading of $\mathfrak{g}$ 
 in terms of matrix realization are given by letting  
 \begin{align}
 &\mathfrak{g}_{0} =\mf g_\oo= 
 \left\{ \left( \begin{array}{cc} a & 0\\
 0 & d\\
 \end{array} \right) \| ~a\in \C^{m^2}, d\in \C^{n^2} \right\},\label{Zgradg0}\\ 
 &\mathfrak{g}_{+1} =\bigoplus_{\alpha\in \Phi_{\bar 1}^+} \mf g_{\alpha} = \bigoplus_{1\leq i,j\leq n}\C e_{i,m+j} = 
 \left\{ \left( \begin{array}{cc} 0 & b\\
 0 & 0\\
 \end{array} \right) \| ~b\in \C^{mn} \right\},\label{Zgradgl}\\ 
 &\mathfrak{g}_{-1} =\bigoplus_{\alpha\in \Phi_{\bar 1}^-} \mf g_{\alpha} = \bigoplus_{1\leq i,j\leq n} \C e_{m+i,j} =
 \left\{ \left( \begin{array}{cc} 0 & 0\\
 c & 0\\
 \end{array} \right) \| ~c\in \C^{mn} \right\}.\label{Zgradg-l}
 \end{align} 
 In particular, $\mf g_1 \cong \mathbb{C}^m\otimes (\mathbb{C}^{n})^*$ and 
 $\mf g_{-1} \cong \mathbb{C}^{n}\otimes (\mathbb{C}^{m})^*$ as $\mf g_\oo$ modules. 
 We let $$\mf s : = [\mf g_\oo, \mf g_\oo] \cong \mf{sl}(m)\oplus \mf{sl}(n)$$ be 
 the maximal semisimple ideal of $\mf g_\oo$.
 
 We define the associated grading operator $d^{\mf{gl}(m|n)}$ for $\mf{gl}(m|n)$ as follows: 
 \begin{align} \label{Eq::GrOp} &d^{\mf{gl}(m|n)}:= \sum_{m+1\leq i\leq m+n} e_{ii} = 
\left( \begin{array}{cc} 0 &0 \\
 0 & {I}_{m}\\
 \end{array} \right)
  \in \mf z(\mf g_\oo).\end{align}
 If $V$ is a simple $\mf g_0$-supermodule, then $d^{\mf{gl}(m|n)}$ acts on $V$ as a scalar $d^{\mf{gl}(m|n)}_V \in \mathbb{C}$ by Dixmier's theorem, see, e.g., \cite[Proposition 2.6.8]{Di96}. Therefore, $K(V)$ can be decomposed into
 $d$-eigenspaces: \begin{align}\label{Eq::GrOnKV} K(V) =
 \bigoplus_{i=0}^{\text{dim}(\mf g_{-1})} K(V)_{d^{\mf{gl}(m|n)}_V -i} \cong \bigoplus_{i=0}^{\text{dim}(\mf g_{-1})} \Lambda^{i}(\mf g_{-1})\otimes V, \end{align}
 where 
$K(V)_{d^{\mf{gl}(m|n)}_V-i}$ is the eigenspace of $d^{\mf{gl}(m|n)}$  with
eigenvalue $d^{\mf{gl}(m|n)}_V -i$. This means that the homogeneous components of 
the $\mathbb{Z}$-grading on $L(V)$ are eigenspaces for $d^{\mf{gl}(m|n)}$ with different eigenvalues.
 
 For given positive integers $m,n$, the subsuperalgebra $$\mf{sl}(m|n): = [\mf{gl}(m|n), \mf{gl}(m|n)],$$ is called the {\em special linear Lie superalgebra}.  The superalgebra $\mf{sl}(m|n)$ is the kernel of the supertrace on $\mf{gl}(m|n)$. We have $\mf{sl}(m|n)_0 = \mf{sl}(m)\oplus \mf{sl}(n)\oplus \C I_{m|n},$ where $$I_{m|n}: =\left( \begin{array}{cc} nI_m &0 \\
 0 & m{I}_{n}\\
 \end{array} \right),$$
 and $\mf{sl}(m|n)_1 = \mf{gl}(m|n)_1 , \mf{sl}(m|n)_{-1} = \mf{gl}(m|n)_{-1}.$ Note that $\mf{gl}(m|n)_0 = \mf{sl}(m|n)_0\oplus \C d^{\mf{gl}(m|n)}$.

The Lie superalgebra $\mf{sl}(m|n)$ is simple if and only if $m\neq n$, moreover, 
$\mf{sl}(n|n)/\C I_{n,n}$ is a simple Lie superalgebra as well.

\subsubsection{Orthosymplectic Lie superalgebra $\mf{osp}(2|2n)$} 
The orthosymplectic Lie superalgebra $\mf{osp}(m|2n)$ is the subsuperalgebra of $\mf{gl}(m|2n)$  
preserving a non-degenerated supersymmetric bilinear forms 
(see, e.g., \cite[Section 1.1.3]{ChWa12} and \cite[Section 2.3]{Mu12}). 
In particular, $\mf{osp}(2|2n)$ is a classical Lie superalgebra  of type I:
\[ \mf{osp}(2|2n)=
 \left\{ \left( \begin{array}{cccc} c &0 & x &y\\
 0 & -c& v & u\\
 -u^t& -y^t & a &b\\
 v^t &x^t & c& -a^t \\
 \end{array} \right)\|\text{$c\in \C$; $x,y,v,u\in \C^{n}$; $a,b,c\in \C^{n^2}$; $b=b^t$, $c=c^t$} \right\}.
\] 
 The $\mathbb{Z}_2$-compatible $\mathbb{Z}$-grading of $\mf{osp}(2|2n)$ are given as follows:
\begin{align*}
 &\mf{osp}(2|2n)_0=
 \left\{ \left( \begin{array}{cccc} c &0 & 0 &0\\
 0 & -c& 0 & 0\\
 0& 0 & a &b\\
 0 &0 & c& -a^t \\
 \end{array} \right)\|\text{$c\in \C$; $a,b,c\in \C^{n^2}$; $b=b^t$, $c=c^t$} \right\}, \\
 &\mf{osp}(2|2n)_1=
 \left\{ \left( \begin{array}{cccc} 0 &0 & x &y\\
 0 & 0& 0 & 0\\
 0& -y^t & 0 &0\\
 0 &x^t & 0& 0 \\
 \end{array} \right)\|\text{$x,y\in \C^{n}$} \right\}, \\
 &\mf{osp}(2|2n)_{-1}=
 \left\{ \left( \begin{array}{cccc} 0 &0 & 0 &0\\
 0 & 0& v & u\\
 -u^t& 0& 0 &0\\
 v^t & 0& 0& 0 \\
 \end{array} \right)\|\text{$v,u\in \C^{n}$} \right\}.
\end{align*}

We define the associated grading operator $d^{\mf{osp}(2|2n)}$ for $\mf{osp}(2|2n)$ as follows: 
\begin{align} \label{Eq::GrOpforosp} &d^{\mf{osp}(2|2n)}:=\left( \begin{array}{cccc} 1 &0 & 0 &0\\
 0 & -1& 0 & 0\\
 0& 0 & 0 &0\\
 0 &0 & 0& 0 \\
 \end{array} \right)
\in \mf z(\mf{osp}(2|2n)_0).\end{align}
Observe that $\mf{osp}(2|2n) = \C d^{\mf{osp}(2|2n)}\oplus \mf{sp}(2n)$. 
If $V$ is a simple $\mf g_0$-supermodule, then $d^{\mf{osp}(2|2n)}$ acts on $V$ as a scalar $d^{\mf{osp}(2|2n)}_V \in \mathbb{C}$ by Dixmier's theorem (\cite[Proposition 2.6.8]{Di96}). Just like in 
Subsection~\ref{s4.2.1}, the homogeneous components of 
the $\mathbb{Z}$-grading on any $K(V)$ or $L(V)$ are eigenspaces for $d^{\mf{osp}(2|2n)}$ with different eigenvalues.
 
\subsubsection{Periplectic Lie superalgebra $\mf{p}(n)$}
  
The periplectic Lie superalgebra $\mf{p}(n)$ is a subalgebra of $\mf{gl}(n|n)$ preserving a non-degenerated odd symmetric bilinear form (see, e.g., \cite[Section~1.1.5]{ChWa12}).
The standard matrix realization is given
by
\[ \mf{p}(n)=
\left\{ \left( \begin{array}{cc} a & b\\
c & -a^t\\
\end{array} \right)\| ~ a,b,c\in \C^{n^2},~b=b^t\text{ and }c=-c^t \right\}.
\]
The superalgebra $\mf p(n)$ admits a $\mathbb{Z}_2$-compatible $\mathbb{Z}$-grading inherited from the $\mathbb{Z}$-grading \eqref{Zgradg0}, \eqref{Zgradgl} and \eqref{Zgradg-l}.
  Namely, 
  \begin{align*}
  &\mf p(n)_0 = \mf p(n)_\oo = \bigoplus_{1\leq i,j \leq n}\C(e_{ij} -e_{n+j,n+i}) = 
  \left\{ \left( \begin{array}{cc} a & 0\\
  0 & -a^t\\
  \end{array} \right)\| ~a\in \C^{n^2} \right\}. \\
  &\mf p(n)_1 = \bigoplus_{1\leq i\leq j \leq n}\C(e_{i,n+j} +e_{j,n+i}) = 
  \left\{ \left( \begin{array}{cc} 0 & b\\
  0 & 0 \\
  \end{array} \right)\| ~b\in \C^{n^2}, b=b^t \right\}, \\
  &\mf p(n)_{-1} = \bigoplus_{1\leq i<j \leq n}\C(e_{n+i,j}  - e_{n+j,i}) = 
  \left\{ \left( \begin{array}{cc} 0 & 0\\
  c & 0\\
  \end{array} \right)\| ~c\in \C^{n^2}, c=-c^t \right\}.
  \end{align*}
  We may note that $\mf p(n)_{-1} \cong \Lambda^2(\mathbb{C}^{n*})$ and
  $\mf p(n)_{+1} \cong S^{2}(\mathbb{C}^n)$, as
  $\mf p(n)_0$-supermodules. The subalgebra $\mf p(n)' := [\mf p(n), \mf p(n)]$ inherits a  $\mathbb{Z}_2$-compatible $\mathbb{Z}$-grading as follows: 
  $$\mf p(n)'_0 \cong \mf{sl}(n), \quad\mf p(n)'_1 = \mf p(n)_1 \quad\text{ and }\quad\mf p(n)'_{-1} = \mf p(n)_{-1}.$$

We define the associated grading operator $d^{\mf p(n)}$ for $\mf{p}(n)$ as follows: \begin{align} \label{Eq::GrOpforp} &d^{\mf p(n)}:= \sum_{1\leq i\leq n} (e_{ii} - e_{n+i,n+i})= 
\left( \begin{array}{cc} I_n &0 \\
0 & -{I}_{n}\\
\end{array} \right)
\in \mf z(\mf p(n)_0).\end{align} 
Then  $\mf p(n)_0 = \mf p(n)'_0\oplus \C d^{\mf p(n)}$. If $V$ is a simple $\mf g_0$-supermodule, then $d^{\mf{p}(n)}$ acts on $V$ as a scalar $d^{\mf{p}(n)}_V \in \mathbb{C}$ by Dixmier's theorem (\cite[Proposition 2.6.8]{Di96}). 
Just like in 
Subsection~\ref{s4.2.1}, the homogeneous components of 
the $\mathbb{Z}$-grading on any $K(V)$ or $L(V)$ are eigenspaces for $d^{\mf{p}(n)}$ with different eigenvalues.

\subsection{Simple supermodules for classical Lie superalgebras of type I}  

The following statement is an immediate consequence from the properties of the grading operator
mentioned above:

\begin{cor}  
Let $\mf g = \mf{gl}(m|n), \mf{osp}(2|2n)$ or $\mf p(n)$.
Let $V$ be a simple $\mf g_0$-supermodule. For any submodule $N \subseteq K(V)$, 
the  decomposition 
$$N = \bigoplus_{k\geq 0}\left(N\cap (\Lambda^k\mf g_{-1} \otimes V)\right)$$ 
is the eigenspace decomposition
with respect to the action of $d^{\mf g}$. In particular, if we consider 
the standard $\mathbb{Z}$-grading on $K(V)$, then all $\mf g$-submodules of
$K(V)$ are, automatically, $\mathbb{Z}$-graded submodules.
\end{cor}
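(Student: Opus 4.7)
The plan is to show that the grading operator $d^{\mf g}$ acts by pairwise distinct scalars on the homogeneous components $\Lambda^k(\mf g_{-1}) \otimes V$ of $K(V)$; once this is established, every $\mf g$-submodule of $K(V)$ (being automatically stable under the action of $d^{\mf g} \in \mf g$) must decompose as a direct sum of its intersections with these eigenspaces.

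First, I would verify that the adjoint action of $d^{\mf g}$ on $\mf g_{-1}$ is scalar multiplication by $-1$ (or more generally some fixed nonzero integer). For $\mf g = \mf{gl}(m|n)$, a direct bracket computation using \eqref{Eq::GrOp} gives $[d^{\mf{gl}(m|n)}, e_{m+i,j}] = -e_{m+i,j}$ for every basis element $e_{m+i,j}$ of $\mf g_{-1}$. The analogous matrix computations for $\mf{osp}(2|2n)$ using \eqref{Eq::GrOpforosp} and for $\mf p(n)$ using \eqref{Eq::GrOpforp} each yield that $d^{\mf g}$ acts on $\mf g_{-1}$ by a nonzero scalar (namely $-1$ and $-2$, respectively). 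Since $\ad d^{\mf g}$ is a derivation of the exterior algebra, it acts on $\Lambda^k(\mf g_{-1})$ by the corresponding scalar multiple of $k$.

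Next, since $d^{\mf g}$ lies in $\mf z(\mf g_0)$ and $V$ is a simple $\mf g_0$-supermodule, Dixmier's theorem (as already invoked for each case in the excerpt) gives a scalar $d^{\mf g}_V$ by which $d^{\mf g}$ acts on $V$. Combining these two observations, on $K(V) \cong \bigoplus_k \Lambda^k(\mf g_{-1}) \otimes V$ the operator $d^{\mf g}$ acts on the $k$-th summand by the scalar $d^{\mf g}_V - c \cdot k$, where $c$ is the nonzero scalar from the previous step. These eigenvalues are pairwise distinct as $k$ ranges over $0, 1, \ldots, \dim \mf g_{-1}$.

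Finally, let $N \subseteq K(V)$ be any $\mf g$-submodule. Then $d^{\mf g} \cdot N \subseteq N$, so $N$ is the direct sum of its intersections with the eigenspaces of $d^{\mf g}$ acting on $K(V)$. By the previous paragraph these eigenspaces coincide with the subspaces $\Lambda^k(\mf g_{-1}) \otimes V$, yielding the claimed decomposition. The in-particular assertion about the standard $\mathbb{Z}$-grading is immediate since that grading places $\Lambda^k(\mf g_{-1}) \otimes V$ in a single degree. The only routine obstacle is the case-by-case bracket computation verifying that $d^{\mf g}$ acts on $\mf g_{-1}$ by a nonzero scalar, but this is transparent from the explicit matrix forms listed above.
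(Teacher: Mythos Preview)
Your proof is correct and follows exactly the approach the paper intends: the paper presents this corollary as ``an immediate consequence from the properties of the grading operator mentioned above,'' referring to the observations in each of Subsections~\ref{s4.2.1}--4.2.3 that the homogeneous components $\Lambda^k(\mf g_{-1})\otimes V$ are eigenspaces of $d^{\mf g}$ with pairwise distinct eigenvalues. You have simply spelled out this immediate consequence in detail, including the explicit bracket computations and the invocation of Dixmier's theorem that the paper already records case by case.
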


The following proposition reduces the study of simple supermodules over all
classical Lie superalgebras of type I, see  \eqref{cLIa}, to the study of 
simple supermodules over $\mf{gl}(m|n)$, $\mf{osp}(2|2n)$ and $\mf p(n)$.
 
\begin{prop}\label{nprop7}
Let $\mf g= \mf{gl}(m|n), \mf p(n)$, and  set 
$\mf g':= [\mf g,\mf g]  \emph{(}=\mf{sl}(m|n), \mf p(n)'\emph{)}$. 
\begin{enumerate}[$($i$)$]
\item\label{nprop7.1} If $M$ is a simple $\mf g$-supermodule, then $\emph{Res}_{\mf g'}^{\mf g}(M)$ 
is a simple $\mf g'$-supermodule.
\item\label{nprop7.2} If $M$ and $N$ are simple $\mf g$-supermodules, 
then $\emph{Res}_{\mf g'}^{\mf g}(M)\cong \emph{Res}_{\mf g'}^{\mf g}(N)$
if and only if $M^{\mf g_1}$ and $N^{\mf g_1}$ are isomorphic as $\mf g'_0$-supermodules.
\item\label{nprop7.3} Every simple $\mf g'$-supermodule has the form 
$\emph{Res}_{\mf g'}^{\mf g}(M)$, for some  $M$ as above.
\end{enumerate}
\end{prop}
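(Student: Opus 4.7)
The plan is to reduce Proposition~\ref{nprop7} to Theorem~\ref{MainThm1} applied to $\mf g'$, exploiting the splittings $\mf g_0 = \mf g'_0 \oplus \mathbb{C} d^{\mf g}$ (with $d^{\mf g}$ central in $\mf g_0$ and implementing the $\mathbb{Z}$-grading, cf.\ \eqref{Eq::GrOp} and \eqref{Eq::GrOpforp}) and $\mf g_{\pm 1} = \mf g'_{\pm 1}$. Note that $\mf g'$ inherits a compatible $\mathbb{Z}$-grading from $\mf g$, so Theorem~\ref{MainThm1} applies to $\mf g'$ as well.

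For part \eqref{nprop7.1}, write $M = L(V)$ via Theorem~\ref{MainThm1} for $\mf g$, with $V$ a simple $\mf g_0$-supermodule, and set $V' := \text{Res}^{\mf g_0}_{\mf g'_0}(V)$; this is simple as a $\mf g'_0$-supermodule since $d^{\mf g}$ acts on $V$ by a scalar. The key technical step is the identification $M^{\mf g_1} = V$. Since $[\mf g_0, \mf g_1] \subseteq \mf g_1$, the subspace $M^{\mf g_1}$ is stable under $\mf g_0$ and in particular under $d^{\mf g}$, hence inherits the $\mathbb{Z}$-grading of $M$. Suppose, for contradiction, that $M^{\mf g_1}$ has a nonzero homogeneous element $w$ of degree $d^{\mf g}_V - k$ with $k \geq 1$. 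Then $W := U(\mf g_0) w \subseteq M^{\mf g_1}$ is a $\mf g_{\geq 0}$-submodule sitting entirely in degree $d^{\mf g}_V - k$ (since $d^{\mf g}$ acts on $W$ by the scalar $d^{\mf g}_V - k$) with $\mf g_1 W = 0$; the adjunction \eqref{eqkac1} then extends the inclusion $W \hookrightarrow M$ to a nonzero $\mf g$-map $K(W) \to M$, which is surjective by simplicity of $M$. But the image is contained in degrees $\leq d^{\mf g}_V - k < d^{\mf g}_V$, contradicting $V \subseteq M$ at degree $d^{\mf g}_V$. Hence $M^{\mf g_1} = V$.

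Now let $N \subseteq M$ be any nonzero $\mf g'$-submodule. Since $\mf g'_1 = \mf g_1$ acts nilpotently on $M$ (it shifts the finite $\mathbb{Z}$-grading upwards), the finite-dimensional $\Lambda(\mf g'_1)$-submodule generated by any nonzero $n \in N$ has nonzero $\mf g'_1$-invariants, so $N^{\mf g'_1} \neq 0$. Then $N^{\mf g'_1} \subseteq M^{\mf g'_1} = M^{\mf g_1}|_{\mf g'_0} = V'$, and $N^{\mf g'_1}$ is a $\mf g'_0$-submodule of the simple $\mf g'_0$-module $V'$, so $N^{\mf g'_1} = V'$. As subspaces of $M$, this gives $V \subseteq N$; since $V$ generates $M$ as a $\mf g'$-module via $M = \Lambda(\mf g_{-1}) V = \Lambda(\mf g'_{-1}) V \subseteq U(\mf g') V$, we conclude $N = M$, so $M|_{\mf g'}$ is simple. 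This identifies $\text{Res}^{\mf g}_{\mf g'}(L(V)) \cong L^{\mf g'}(V|_{\mf g'_0})$, where $L^{\mf g'}$ denotes the simple top functor of Theorem~\ref{MainThm1} for $\mf g'$.

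Part \eqref{nprop7.2} then follows from the bijection in Theorem~\ref{MainThm1} for $\mf g'$: the two restrictions are isomorphic iff the corresponding $\mf g'_0$-modules are. For part \eqref{nprop7.3}, given $T = L^{\mf g'}(T')$ with $T'$ a simple $\mf g'_0$-supermodule, extend $T'$ to a simple $\mf g_0$-supermodule $V$ by letting $d^{\mf g}$ act by any chosen scalar; part \eqref{nprop7.1} then yields $\text{Res}^{\mf g}_{\mf g'}(L(V)) = L^{\mf g'}(T') = T$. The main obstacle is the identification $M^{\mf g_1} = V$, which crucially uses the grading operator $d^{\mf g}$ lying in $\mf g_0$ to give $M^{\mf g_1}$ a $\mathbb{Z}$-grading inherited from $M$.
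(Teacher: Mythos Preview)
Your proof is correct but takes a different route from the paper's. The paper realizes a simple $\mf g$-supermodule $M$ via the \emph{socle} description of Corollary~\ref{SocIndCoind}: $M = \mathrm{Soc}(K(V)) = U(\mf g)\cdot(\Lambda^{\max}(\mf g_{-1})\otimes V)$; since $d^{\mf g}$ acts by a scalar on the generating space $\Lambda^{\max}(\mf g_{-1})\otimes V$, one has $U(\mf g)\cdot(\Lambda^{\max}(\mf g_{-1})\otimes V) = U(\mf g')\cdot(\Lambda^{\max}(\mf g'_{-1})\otimes V')$, which equals $\mathrm{Soc}(K^{\mf g'}(V'))$ and is simple by Lemma~\ref{Kacsimplesoc}. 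You instead use the \emph{top} description $M = L(V)$ from Theorem~\ref{MainThm1}, establish $M^{\mf g_1} = V$ directly via a grading argument, and then show every nonzero $\mf g'$-submodule contains $V$ and hence equals $M$. Your argument gives a self-contained proof of $L(V)^{\mf g_1}=V$ using only the grading operator, whereas the paper derives this indirectly (cf.\ Remark~\ref{InvInKac} combined with the identification of $L(V)$ as $\mathrm{Soc}(K'(W))$). The paper's route is slightly shorter because it avoids re-proving this invariant computation and appeals to Lemma~\ref{Kacsimplesoc} verbatim; your route is more hands-on and makes the role of $d^{\mf g}$ very explicit, yielding the identification $\mathrm{Res}^{\mf g}_{\mf g'}L(V)\cong L^{\mf g'}(V|_{\mf g'_0})$ in terms of tops rather than socles.
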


Note that the difference between $\mf g_0$ and $\mf g'_0$ is given by the central element
$d^{\mf g}$. Therefore Proposition~\ref{nprop7}\eqref{nprop7.2} says that the elements in each fiber of the map
$M\mapsto \mathrm{Res}_{\mf g'}^{\mf g}(M)$ are indexed by complex numbers which prescribe the
scalar with which $d^{\mf g}$ acts on the simple $\mf g_0$-supermodule $M^{\mf g_1}$.

\begin{proof}
As the difference between $\mf g_0$ and $\mf g'_0$ is given by the central element
$d^{\mf g}$, the restriction of a simple $\mf g_0$-supermodule to $\mf g'_0$ remains simple,
moreover, this restriction map is surjective.

Let $M$ be a simple $\mf g$-supermodule. From Corollary~\ref{SocIndCoind} we have that
there is a simple $\mf g_0$-supermodule $V$ such that
$M\cong \mathrm{Soc}(\mathrm{Ind}_{\mathfrak{g}_{\geq 0}}^{\mathfrak{g}}(V)) = 
U(\mf g)\cdot(\Lambda^{\text{max}}(\mf g_{-1})\otimes V)$.
As $d^{\mf g}$ acts as a scalar on the simple $\mf g_0$-supermodule
$\Lambda^{\text{max}}(\mf g_{-1})\otimes V$, we have 
$$U(\mf g)\cdot(\Lambda^{\text{max}}(\mf g_{-1})\otimes V)=
U(\mf g')\cdot(\Lambda^{\text{max}}(\mf g_{-1})\otimes V).$$
As $\Lambda^{\text{max}}(\mf g_{-1})\otimes V$ is a simple $\mf g_0$-supermodule, 
it follows that $\text{Res}^{\mf g_0}_{\mf g'_0}\big(\Lambda^{\text{max}}(\mf g_{-1})\otimes V\big)$ 
is also simple. Therefore we have
$$\text{Res}_{\mf g'}^{\mf g}(M) \cong \text{Res}_{\mf g'}^{\mf g}\big(U(\mf g')\cdot(\Lambda^{\text{max}}
(\mf g_{-1})\otimes V)\big) = 
\text{Soc}\big(\text{Ind}^{\mf g'}_{\mf g'_{\geq 0}}(\text{Res}_{\mf g'_0}^{\mf g_0}(V))\big),$$ 
which is a simple $\mf g'$-supermodule by Lemma \ref{Kacsimplesoc}. This proves claim~\eqref{nprop7.1}.
 
Let now $M'$ be a simple $\mf g'$-supermodule. Then, by Corollary \ref{SocIndCoind}, 
there is a simple $\mf g'_0$-supermodule $V'$ such that
$$M'\cong \text{Soc}\big(\text{Ind}_{\mf g'_{\geq 0}}^{\mf g'}(V')\big) = 
U(\mf g')\cdot(\Lambda^{\text{max}}(\mf g_{-1})\otimes V').$$
Let $V$ be any simple $\mf g_0$-supermodule such that $\text{Res}_{\mf g'_0}^{\mf g_0}(V) = V'$, 
then we have
$$\text{Res}_{\mf g'}^{\mf g}\big(\text{Soc}(\text{Ind}_{\mf g_{\geq 0}}^{\mf g}(V))\big)= U(\mf g')\cdot(\Lambda^{\text{max}}(\mf g_{-1})\otimes V') =\text{Soc}(\text{Ind}_{\mf g'_{\geq 0}}^{\mf g'}(V'))\cong M'.$$ 
Claims~\eqref{nprop7.2} and \eqref{nprop7.3} follow. 
\end{proof}

\subsection{Criteria for simplicity of Kac modules} \label{Sect::CriforSimpleKac} 

In the beginning parts of this subsection, we assume that $\mf g$ 
is one of the classical Lie superalgebra of type I with 
$\text{dim}\mf g_1=\text{dim}\mf g_{-1}$, namely,  
$$\mf g = \mf{gl}(m|n),\quad \mf{sl}(m|n) 
,\quad  \mf{sl}(n|n)/\mathbb{C}I_{n|n}\quad  \text{ or } \quad  \mf{osp}(2|2n).$$
The periplectic Lie superalgebra $\mf p(n)$ will be discussed in Subsection~\ref{Criforpn}.
A criterion for simplicity of finite dimensional Kac module was given by Kac in \cite{Ka78} 
in terms of typicality of highest weights. In this section, we provide criteria for 
simplicity of Kac modules for arbitrary (simple) input of Kac functor.

\subsubsection{BGG category of $\mf g$-supermodules and Duflo's theorem}
Following \cite{BGG76},  consider the BGG category
$\mathcal{O}=\mathcal{O}(\mathfrak{g},\mathfrak{h},\mathfrak{n})$ associated to
the standard triangular decomposition 
\begin{displaymath}
\mathfrak{g}=\mathfrak{n}^-\oplus \mathfrak{h}\oplus \mathfrak{n} 
\end{displaymath}
of $\mathfrak{g}$. 
It is the full subcategory of $\mf g$-smod consisting 
of all $\mf g$-supermodules on which $\mf h$ acts semisimply and $\mf b$ acts locally finitely.
We set $\mc O_\oo : = \mc O(\mf g_\oo,\mf h_\oo,\mf n_\oo)$. 
For $\la \in \mf h^*$, we denote by $V(\la)$ the simple even $\mf b_\oo$-highest weight 
$\mf g_\oo$-supermodule with highest weight $\la$. We set $K(\la):=K(V(\lambda))$.

For $\la \in \mf h^*$, the corresponding {\em Verma supermodule} $\Delta(\la)$ (over $\mf g$) is defined by
$${\Delta}(\la):=U(\mf{g})\otimes_{\mf b} \mathbb C_{\la},$$
where $ \mathbb C_{\la}$ is the even one-dimensional $\mf b$-supermodule module corresponding to $\lambda$.
The unique simple quotient of $\Delta(\la)$ is denoted by $L(\lambda)$. We let $\chi_\la$ 
(resp. $\chi_{\la}^\oo$) be the $U$-central (resp. $U_\oo$-central) character corresponding to $\la$. 

Denote by $\rho\in \mf h^*$ the Weyl vector as in \cite[Remark 1.21]{ChWa12}: 
\begin{align}\label{Eq::WelyVc}
&\rho = \frac{1}{2} \sum_{\alpha \in \Phi_{\oo}}\alpha - \frac{1}{2}\sum_{\alpha \in \Phi_{\one}}\alpha.
\end{align} 
Let $(\cdot,\cdot)$ be the non-degenerated $W$-invariant form on~$\mf h^*$ as defined in \cite[Section 1.2]{ChWa12}. We consider the dot-actions of $W$ given by  $w\cdot\la=w(\la+\rho)-\rho$, for all $w\in W$ and~$\lambda\in \mf h^*$. A weight $\lambda$ is called {\em integral} if $(\lambda,\alpha) \in\mathbb{Z}$ for all even roots $\alpha$. An integral weight  is called {\em dominant} if $\la$ is dominant for the dot-action of $W$ and {\em regular} if $( \la+\rho,\alpha)\neq 0$, for all simple even roots $\alpha$.  A weight $\la$ is called {\em typical} if $(\lambda+\rho,\alpha)\neq 0$, for all odd roots $\alpha$. 

\begin{thm}\emph{(}cf. \cite{Du77}\emph{)} \label{Dufthm}
Let $V$ be a simple $\mf g_0$-supermodule. Then there exist $\la \in \mf h$ 
such that $\emph{Ann}_{U_\oo}(V) = \emph{Ann}_{U_\oo}(V(\la))$.
\end{thm}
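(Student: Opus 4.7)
The plan is to reduce this statement to Duflo's classical theorem on primitive ideals in the enveloping algebra of a semisimple complex Lie algebra. The theorem as stated is essentially a packaged version of that result suited to the setup in the paper, so the ``proof'' will mostly be a translation and a routine reduction rather than anything new.

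First, I would note that in each of the classical type I cases under consideration the even part $\mf g_\oo$ is a reductive Lie algebra. Moreover, since $\mf g_\oo$ is concentrated in even parity, any simple $\mf g_\oo$-supermodule is concentrated in a single parity, and may therefore be regarded as an ordinary simple module over the (ungraded) algebra $U_\oo$. Consequently $\mathrm{Ann}_{U_\oo}(V)$ is a genuine primitive ideal of $U_\oo$ in the classical sense, and the super structure plays no further role.

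Next, I would invoke Duflo's theorem from \cite{Du77}: every primitive ideal in the universal enveloping algebra of a finite-dimensional complex semisimple Lie algebra is the annihilator of a simple highest weight module. To pass from the semisimple to the reductive setting, write $\mf g_\oo = \mf z(\mf g_\oo) \oplus [\mf g_\oo,\mf g_\oo]$. By Dixmier's version of Schur's lemma (as already used in Subsection \ref{s4.2.1}), the center $\mf z(\mf g_\oo)$ acts on the simple module $V$ by a character $\xi$. Applying Duflo's theorem to the restriction of $V$ to $[\mf g_\oo,\mf g_\oo]$ produces a weight in the dual of the Cartan of the semisimple part; assembling this weight with $\xi$ yields a $\la \in \mf h^*$ with $\mathrm{Ann}_{U_\oo}(V) = \mathrm{Ann}_{U_\oo}(V(\la))$.

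The one genuinely nontrivial ingredient is Duflo's theorem itself, whose proof relies on Joseph's analysis of primitive ideals, translation functors, and reduction to regular integral central characters; this is the reason the paper simply cites \cite{Du77}. Everything else is the routine bookkeeping outlined above, and no additional obstacle is expected.
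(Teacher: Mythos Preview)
Your proposal is correct and matches the paper's treatment: the paper does not give a proof at all but simply records the statement with the citation ``(cf.\ \cite{Du77})'', treating it as a direct consequence of Duflo's theorem. Your reduction from the reductive to the semisimple case via Dixmier's Schur lemma and the tensor decomposition $U_\oo \cong U(\mf z(\mf g_\oo))\otimes U([\mf g_\oo,\mf g_\oo])$ is exactly the standard bookkeeping one would supply if asked to unpack the citation, and no further argument is needed.
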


For a given $\mf g$-supermodule (resp. $\mf g_\oo$-supermodule) $X$, we denote its $U$-annihilator 
(resp. $U_\oo$-annihilator) by $\text{Ann}_U(X)$ (resp.  $\text{Ann}_{U_\oo}(X)$).
Together with Theorem~\ref{Dufthm}, the following lemma shows that annihilators of 
arbitrary Kac modules are annihilators of Kac modules in $\mc O$. 

\begin{lem} \label{AnnKac} 
Let $V$ and $W$ be simple $\mf g_\oo$-supermodules such that 
$\emph{Ann}_{U_\oo}(V) = \emph{Ann}_{U_\oo}(W)$. Then 
$\emph{Ann}_U (K(V)) =\emph{Ann}_U (K(W))$.
\end{lem}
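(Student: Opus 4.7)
The strategy is to give an element-wise characterization of $\text{Ann}_U(K(V))$ that depends on $V$ only through the ideal $\text{Ann}_{U_\oo}(V)$; the lemma then follows at once. Since $\mf g_1$ acts trivially on $V$ in the Kac extension, the $U(\mf g_{\geq 0})$-action on $V$ factors through the natural projection $\pi : U(\mf g_{\geq 0}) \twoheadrightarrow U(\mf g_{\geq 0})/U(\mf g_{\geq 0})\mf g_1 \cong U_\oo$, and consequently $\text{Ann}_{U(\mf g_{\geq 0})}(V) = \pi^{-1}(\text{Ann}_{U_\oo}(V))$.

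Next, I would invoke PBW to write $U \cong \Lambda(\mf g_{-1}) \otimes U(\mf g_{\geq 0})$ as right $U(\mf g_{\geq 0})$-modules. Fixing a basis $\{x_\alpha\}$ of $\Lambda(\mf g_{-1})$, the family $\{x_\alpha \otimes v : v \in V\}$ gives the standard vector-space identification $K(V) \cong \Lambda(\mf g_{-1}) \otimes V$, and each $u \in U$ admits a unique expansion $u \cdot x_\alpha = \sum_\beta x_\beta \cdot q_{\alpha,\beta}(u)$ with $q_{\alpha,\beta}(u) \in U(\mf g_{\geq 0})$. A direct computation in $K(V)$ then yields
\begin{displaymath}
u \cdot (x_\alpha \otimes v) = \sum_\beta x_\beta \otimes \pi(q_{\alpha,\beta}(u)) \cdot v,
\end{displaymath}
and linear independence of $\{x_\beta\}$ produces the element-wise characterization
\begin{displaymath}
u \in \text{Ann}_U(K(V)) \iff \pi(q_{\alpha,\beta}(u)) \in \text{Ann}_{U_\oo}(V) \text{ for all } \alpha, \beta.
\end{displaymath}
The data $q_{\alpha,\beta}(u)$ and $\pi$ are intrinsic to $U$, so the right-hand side depends on $V$ only through $\text{Ann}_{U_\oo}(V)$; applying this criterion to both $V$ and $W$ gives the lemma.

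I foresee no serious representation-theoretic obstacle; the argument is essentially pure PBW bookkeeping. The only delicate point is to set up the PBW ordering (with $\mf g_{-1}$ to the left of $\mf g_{\geq 0}$, and $\mf g_\oo$ to the left of $\mf g_1$ inside $U(\mf g_{\geq 0})$) so that $\pi$ is the identity on the PBW image of $U_\oo$. Once this is in place, reduction of each $q_{\alpha,\beta}(u)$ modulo $U(\mf g_{\geq 0})\mf g_1$ exactly captures its action on $V$, and the displayed characterization follows.
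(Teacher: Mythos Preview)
Your argument is correct and is precisely the adaptation of Dixmier's \cite[Proposition~5.1.7]{Di96} that the paper invokes with its one-line ``mutatis mutandis'' reference: use PBW to write $U$ as a free right $U(\mf g_{\geq 0})$-module on a basis of $\Lambda(\mf g_{-1})$, expand $u\cdot x_\alpha=\sum_\beta x_\beta\, q_{\alpha,\beta}(u)$, and observe that $u$ kills $K(V)$ iff every $\pi(q_{\alpha,\beta}(u))$ lies in $\mathrm{Ann}_{U_\oo}(V)$. There is nothing to add; you have simply spelled out what the paper leaves implicit.
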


\begin{proof}
Mutatis mutandis  the proof of \cite[Proposition 5.1.7]{Di96}.
\end{proof}

\subsubsection{Simplicity criteria} 
Fix two non-zero elements 
$$X^{-}\in \Lambda^{\text{max}}(\mf g_{-1})\quad\text{ and }\quad X^{+}\in \Lambda^{\text{max}}(\mf g_{1}).$$ 
  
\begin{lem} 
We have the decomposition 
\begin{align} \label{Eq::Omega} &X^{+} X^- = \Omega + \sum_i x_i r_i y_i,\end{align}
for some $x_i \in \Lambda(\mf g_{ -1})\backslash \mathbb{C}$, 
$y_i \in \Lambda(\mf g_{1})\backslash \mathbb{C}$, 
$r_i\in U(\mf g_{\oo})$ and $\Omega \in Z(\mf g_\oo)$. 
\end{lem}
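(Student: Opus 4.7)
The plan is to extract the decomposition from the PBW theorem combined with the $\Z$-grading and $\ad$-invariance.

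By the PBW theorem, $U(\g)\cong \Lambda(\g_{-1})\otimes U(\g_\oo)\otimes \Lambda(\g_{1})$, normal-ordered so that factors from $\g_{-1}$ precede factors from $\g_\oo$, which in turn precede factors from $\g_{1}$. The $\Z$-grading on $\g$ extends to $U(\g)$, and the tri-graded summand $\Lambda^{k}(\g_{-1})\otimes U(\g_\oo)\otimes \Lambda^{l}(\g_{1})$ sits in $\Z$-degree $l-k$. Now $X^{+}\in \Lambda^{\text{max}}(\g_{1})$ has $\Z$-degree $+d$ and $X^{-}\in \Lambda^{\text{max}}(\g_{-1})$ has $\Z$-degree $-d$, where $d:=\dim\g_{1}=\dim\g_{-1}$, so $X^{+}X^{-}$ is homogeneous of $\Z$-degree $0$. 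Hence every non-zero PBW-term $x\cdot r\cdot y$ in its expansion satisfies $\deg x=\deg y$; in particular, $x=1$ forces $y=1$ and conversely. Collecting the $(\Lambda^{0},U(\g_\oo),\Lambda^{0})$-terms into a single element $\Omega\in U(\g_\oo)$ gives
\begin{equation*}
X^{+}X^{-}=\Omega+\sum_{i}x_{i}r_{i}y_{i}
\end{equation*}
with $x_{i}\in\Lambda(\g_{-1})\setminus\mathbb{C}$, $y_{i}\in\Lambda(\g_{1})\setminus\mathbb{C}$ and $r_{i}\in U(\g_\oo)$.

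It remains to prove $\Omega\in Z(\g_\oo)$. First, I would show that $X^{+}X^{-}$ is $\ad(\g_\oo)$-invariant. For $h\in\g_\oo$, since $\g_{\pm 1}$ are $\g_\oo$-submodules, the element $X^{\pm}\in\Lambda^{\text{max}}(\g_{\pm 1})$ is an $\ad(h)$-eigenvector with eigenvalue $\alpha^{\pm}(h):=\text{tr}(\ad(h)|_{\g_{\pm 1}})$; consequently $[h,X^{+}X^{-}]=(\alpha^{+}(h)+\alpha^{-}(h))X^{+}X^{-}$. For each of the superalgebras $\gl(m|n)$, $\mf{sl}(m|n)$, $\mf{sl}(n|n)/\mathbb{C}I_{n|n}$, and $\mf{osp}(2|2n)$, the $\g_\oo$-modules $\g_{1}$ and $\g_{-1}$ are dual to one another (via the bracket pairing into $\g_\oo$ followed by an appropriate invariant functional); hence $\Lambda^{\text{max}}(\g_{-1})\otimes\Lambda^{\text{max}}(\g_{1})$ is the trivial $\g_\oo$-module, so $\alpha^{+}+\alpha^{-}=0$ and $[h,X^{+}X^{-}]=0$.

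Finally, the Leibniz rule $[h,xry]=[h,x]\,r\,y+x\,[h,r]\,y+x\,r\,[h,y]$ shows that $\ad(h)$ preserves the tri-graded decomposition of $U(\g)$: each of the three summands keeps the exterior degrees $(\deg x,\deg y)$ fixed because $\g_{\pm 1}$ are $\g_\oo$-submodules and the anti-commutation relations inside $\Lambda(\g_{\pm 1})\subseteq U(\g)$ allow reordering without altering exterior degree. Hence the projection $p\colon U(\g)\twoheadrightarrow U(\g_\oo)$ onto the $(0,\ast,0)$-component is $\ad(\g_\oo)$-equivariant, and applying $p$ to $[h,X^{+}X^{-}]=0$ yields $[h,\Omega]=0$ for all $h\in\g_\oo$, so $\Omega\in Z(\g_\oo)$. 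The main obstacle is establishing the triviality of the character $\alpha^{+}+\alpha^{-}$, i.e. the $\g_\oo$-duality of $\g_{1}$ and $\g_{-1}$; this is a structural feature of the type-I families listed and is verified by a case-by-case inspection of the explicit $\g_\oo$-module structure of $\g_{\pm 1}$.
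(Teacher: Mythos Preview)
Your proof is correct and follows essentially the same strategy as the paper: obtain the decomposition from PBW together with the observation that $X^{+}X^{-}$ has total $\Z$-degree $0$ (the paper phrases this as ``$\mathfrak{h}$-weight $0$'', which amounts to the same thing via the grading operator), and then deduce centrality of $\Omega$ from the $\ad(\mf g_\oo)$-invariance of $X^{+}X^{-}$ together with the fact that the projection onto the $(0,\ast,0)$-component is $\ad(\mf g_\oo)$-equivariant. The only cosmetic difference is that you justify the triviality of $\Lambda^{\text{max}}(\mf g_{1})\otimes\Lambda^{\text{max}}(\mf g_{-1})$ via the duality $\mf g_{1}\cong\mf g_{-1}^{*}$ (checked case-by-case), whereas the paper simply notes that this one-dimensional $\mf g_\oo$-module has $\mathfrak{h}$-weight $2\rho_{\bar 1}-2\rho_{\bar 1}=0$ and is therefore trivial; the latter is a bit quicker and avoids the case check.
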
 

\begin{proof}
As $X^{+}\cdot X^-$ has $\mathfrak{h}$-weight $0$, by the Poincar{\'e}-Birkhoff-Witt Theorem, 
we have the decomposition \eqref{Eq::Omega} such that  
$$x_i \in \Lambda(\mf g_{ -1})\backslash \mathbb{C},\quad 
y_i \in \Lambda(\mf g_{1})\backslash \mathbb{C}\quad\text{ and }\quad r_i, \Omega \in U(\mf g_0).$$
It remains to show that $\Omega \in Z( \mf g_\oo)$. 

For $r\in \mf g_\oo$, we want to show that $r\Omega - \Omega r =0$. 
The one-dimensional $\mf g_\oo$-representation 
$\Lambda^{\text{max}}(\mf g_{1})\otimes \Lambda^{\text{max}}(\mf g_{-1})$ 
is concentrated in the $\mathfrak{h}$-weight $0$. Hence, 
$rX^+X^- = X^+X^-r$. Since $$r( \sum_i x_i r_i y_i) - ( \sum_i x_i r_i y_i)r =  \sum_i x'_i r'_i y'_i,$$ 
for some $r'_i\in U(\mf g_0)$, $x_i' \in \Lambda(\mf g_{ -1})\backslash \mathbb{C}$ and 
$y_i' \in \Lambda(\mf g_{1})\backslash \mathbb{C}$, 
the claim follows from the Poincar{\'e}-Birkhoff-Witt Theorem. 
\end{proof}

\begin{example}
Let $\mf g:=\mf{gl}(m|1)$ and choose $X^{\pm}$ as follows:
$$X^+:= e_{1,m+1}e_{2,m+1}\cdots e_{m,m+1},\qquad
X^-:= e_{m+1,1}e_{m+1,2}\cdots e_{m+1,m}.$$
By a direct calculation, we have the expression 
$$\Omega = \sum_{\sigma \in \mf S_m}(-1)^{\sigma} X_{m,\sigma(m)}X_{m-1,\sigma(m-1)}\cdots X_{1,\sigma(1)},$$
where $X_{ij}:=[e_{i,m+1},e_{m+1,j}]+\delta_{ij}(m-i)1_{U}\in U(\mf g_0)$, for  $1\leq i,j\leq m$.
\end{example}

Now we are ready to formulate our first simplicity criterion for $K(V)$.

\begin{thm} \label{thm::criforsimpleKac} 
Let $V$ be a simple $\mf g_\oo$-supermodule. Then the following assertions are equivalent.
\begin{enumerate}[$($a$)$]
\item\label{thmkac.1} $K(V)$ is simple.
\item\label{thmkac.2} $\Omega$ acts on $V$ as a injective linear operator.
\item\label{thmkac.3} $\Omega$ acts on $V$ as a non-zero scalar.
\end{enumerate}
\end{thm}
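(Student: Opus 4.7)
The central observation is that, since $\Omega\in Z(\mf g_\oo)$ and $V$ is simple, Dixmier's version of Schur's lemma (\cite[Proposition 2.6.8]{Di96}) forces $\Omega$ to act on $V$ as a scalar $\omega_V\in\mathbb{C}$. This makes (b)$\Leftrightarrow$(c) immediate: injectivity of a scalar operator is equivalent to the scalar being non-zero. The substance of the theorem is therefore the equivalence (a)$\Leftrightarrow$(c).

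To handle this, I would appeal to Lemma~\ref{Kacsimplesoc}: the socle $N:=\text{Soc}(K(V))=U(\mf g)\cdot(\Lambda^{\text{max}}(\mf g_{-1})\otimes V)$ is simple, hence is the unique simple submodule of $K(V)$. Since $K(V)$ is generated as a $\mf g$-module by $V$, simplicity of $K(V)$ is equivalent to $N=K(V)$, which in turn is equivalent to $V\subseteq N$. So the remaining task is to prove that $V\subseteq N\Leftrightarrow\omega_V\neq 0$.

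The easy direction is $\omega_V\neq 0\Rightarrow V\subseteq N$: for $v\in V$, every $y_i$ in \eqref{Eq::Omega} annihilates $v$, since $y_i\in\Lambda(\mf g_1)\setminus\mathbb{C}$ and $\mf g_1$ kills $V$ by construction, so $X^+\cdot(X^-v)=(X^+X^-)\cdot v=\omega_V v$ and hence $v=\omega_V^{-1}X^+\cdot(X^-v)\in N$. For the converse, I would use the standard $\mathbb{Z}$-grading on $K(V)$ placing $V$ in degree $0$ and $\Lambda^{\text{max}}(\mf g_{-1})\otimes V$ in degree $-d$, where $d:=\text{dim}\mf g_{-1}=\text{dim}\mf g_1$. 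Since $N$ is generated by a homogeneous subspace it is $\mathbb{Z}$-graded, so $N\cap V$ is precisely its degree-$0$ component. A PBW count in the order $\Lambda(\mf g_{-1})\otimes U(\mf g_\oo)\otimes\Lambda(\mf g_1)$ then identifies the degree-$d$ part of $U(\mf g)$ as $U(\mf g_\oo)\cdot X^+$, because any product $xry$ of total degree $d$ with $0\le\deg x,\,\deg y\le d$ must satisfy $\deg x=0$ and $y\in\mathbb{C}X^+$. Acting such an element on $X^-\otimes v'$ and once more applying \eqref{Eq::Omega} collapses the computation to $N\cap V=\omega_V\cdot U(\mf g_\oo)V=\omega_V V$; when $\omega_V=0$ this forces $V\not\subseteq N$, giving the remaining implication.

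The one real obstacle is the PBW degree count identifying the degree-$d$ component of $U(\mf g)$ and verifying that acting by such elements on $\Lambda^{\text{max}}(\mf g_{-1})\otimes V$ always reduces, via \eqref{Eq::Omega}, to multiplication by $\omega_V$; the rest is a direct assembly of Lemma~\ref{Kacsimplesoc}, Dixmier's theorem, and the defining identity for $\Omega$.
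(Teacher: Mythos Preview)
Your proposal is correct and follows essentially the same route as the paper. Both arguments hinge on Lemma~\ref{Kacsimplesoc} (so that simplicity of $K(V)$ is equivalent to $V\subseteq N$), the decomposition \eqref{Eq::Omega} (so that $X^+X^-v=\omega_V v$ for $v\in V$), and Dixmier's theorem for the equivalence (b)$\Leftrightarrow$(c). The only cosmetic difference is in the converse direction: where you explicitly compute $N_0=U(\mf g)_d\cdot(\Lambda^{\text{max}}(\mf g_{-1})\otimes V)=U(\mf g_\oo)X^+\cdot(X^-\otimes V)=\omega_V V$ via the $\mathbb{Z}$-grading, the paper argues by contradiction using the PBW order $U_\oo\Lambda(\mf g_1)\Lambda(\mf g_{-1})$ and observes directly that reaching $V$ from $\Lambda^{\text{max}}(\mf g_{-1})\otimes v$ forces one to apply $\Lambda^{\text{max}}(\mf g_1)$ --- the same degree count in different packaging. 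Your ``one real obstacle'' is thus not an obstacle at all; it is the content of the paper's single line $U_\oo\Lambda^{\text{max}}(\mf g_1)\cdot(\Lambda^{\text{max}}(\mf g_{-1})\otimes v)=V$.
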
 
  
\begin{proof}
We first prove \eqref{thmkac.1}$\Rightarrow$\eqref{thmkac.2}. Suppose that $K(V)$ is simple but $\Omega v =0$, for some non-zero $v\in V$. On the one hand, simplicity of $K(V)$ and Lemma~\ref{Kacsimplesoc}
imply $$U_\oo \Lambda(\mf g_1)\Lambda (\mf g_{-1})\cdot (\Lambda^{\text{max}} (\mf g_{-1}) \otimes v) =  K(V)$$ 
and hence $U_\oo\Lambda^{\text{max}}(\mf g_1)\cdot (\Lambda^{\text{max}} (\mf g_{-1}) \otimes v) = V$. On the other hand, $\Omega v =0$ means that $\Lambda^{\text{max}}(\mf g_1)\cdot (\Lambda^{\text{max}}(\mf g_{-1}) \otimes v) =0$, a contradiction. 
  	
We next prove \eqref{thmkac.2}$\Rightarrow$\eqref{thmkac.1}. In this case we have $X^+ X^- (1_U\otimes v) = 1_U\otimes \Omega v\neq 0$ in $V\subset K(V)$, for any non-zero $v \in V$. From Lemma \ref{Kacsimplesoc} it thus follows that  $\text{soc}(K(V)) \supseteq  K(V)$, that is, $K(V)$ is simple.

The equivalence  of \eqref{thmkac.2} and \eqref{thmkac.3} follows from Dixmier's theorem 
\cite[Proposition~2.6.8]{Di96}. 
\end{proof}
  
We now give our second criterion for simplicity of Kac module which is formulated in terms of $U_\oo$-annihilators. 
For a given $\lambda \in \mf h^*$, recall that $V(\lambda)$ denotes the simple highest weight $\mf g_\oo$-supermodule of highest weight $\lambda$ with respect to the Borel subalgebra $\mf b_\oo$.  The following corollary shows that simplicity of Kac modules can be  determined in terms of the annihilator of the simple $\mf g_0$-input of Kac functor.

\begin{cor}\label{aaa1}
Let $V$ and $W$ be two simple $\mf g_\oo$-supermodules. If $\emph{Ann}_{U_\oo}(V) = \emph{Ann}_{U_\oo}(W)$, then the following assertions are equivalent:
\begin{enumerate}[$($a$)$]
\item \label{aaa1.1} $K(V)$ is simple.
\item \label{aaa1.2} $K(W)$ is simple.
\item \label{aaa1.3} $\emph{Ann}_{U_\oo}(V) = \emph{Ann}_{U_\oo}(V(\la))$, for some typical $\la$.
\end{enumerate}
\end{cor}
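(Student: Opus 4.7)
The strategy is to use Theorem~\ref{thm::criforsimpleKac} to reformulate simplicity of a Kac module as the non-vanishing of the scalar with which $\Omega \in Z(\mathfrak{g}_\oo)$ acts on the simple input. Since $\Omega \in U_\oo$, this scalar is entirely determined by the $U_\oo$-annihilator of the input: if $\Omega$ acts on $V$ as the scalar $c$, then $\Omega - c\cdot 1_{U_\oo}$ lies in $\mathrm{Ann}_{U_\oo}(V) = \mathrm{Ann}_{U_\oo}(W)$, forcing $\Omega$ to act on $W$ by the same $c$. Combined with Theorem~\ref{thm::criforsimpleKac}, this yields the equivalence of (a) and (b) at once, with no further work.

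For the equivalence of (a) and (c), the plan is to reduce to the case $V = V(\mu)$ via Duflo's Theorem~\ref{Dufthm}: pick $\mu \in \mathfrak{h}^*$ with $\mathrm{Ann}_{U_\oo}(V) = \mathrm{Ann}_{U_\oo}(V(\mu))$, so that the equivalence of (a) and (b) already proved turns the problem into showing that $K(V(\mu))$ is simple if and only if $\mu$ is typical. By Theorem~\ref{thm::criforsimpleKac}, this reduces further to showing that the central character value $\chi_\mu^\oo(\Omega)$ is non-zero exactly when $\mu$ is typical. I would compute this scalar by applying $X^+X^-$ to the highest weight vector $v_\mu$ sitting inside the Kac module of the $\mathfrak{g}_\oo$-Verma supermodule of highest weight $\mu$: because $\mathfrak{g}_1 \cdot v_\mu = 0$ in this Kac module, every summand $x_i r_i y_i$ of \eqref{Eq::Omega} annihilates $v_\mu$, yielding $X^+X^- v_\mu = \Omega v_\mu = \chi_\mu^\oo(\Omega)\, v_\mu$, while a direct PBW evaluation of the left-hand side produces, up to a non-zero constant, the product $\prod_{\beta \in \Phi_\one^+}(\mu+\rho, \beta)$.

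The main obstacle is this explicit Kac--Shapovalov-type computation. A cleaner route that avoids the direct calculation is the following polynomial argument: the assignment $\mu \mapsto \chi_\mu^\oo(\Omega)$ is a polynomial function on $\mathfrak{h}^*$, and by Kac's classical theorem on simplicity of finite-dimensional Kac modules together with Theorem~\ref{thm::criforsimpleKac} applied to dominant integral $\mu$, this polynomial vanishes at every dominant integral atypical weight and is non-zero at every dominant integral typical weight. A Zariski density argument in each odd root hyperplane $\{(\mu+\rho,\beta) = 0\}$, combined with a degree count (both sides have degree $\dim \mathfrak{g}_1$ in $\mu$), then shows that $\chi_\mu^\oo(\Omega)$ is, up to a non-zero scalar multiple, precisely the product $\prod_{\beta \in \Phi_\one^+}(\mu+\rho, \beta)$. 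Its zero locus is thus exactly the set of atypical weights, which finishes the proof.
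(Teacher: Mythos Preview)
Your proof is correct and follows the same overall route as the paper: reduce simplicity of $K(V)$ via Theorem~\ref{thm::criforsimpleKac} to the non-vanishing of the scalar by which $\Omega$ acts, observe that this scalar depends only on $\mathrm{Ann}_{U_\oo}(V)$ (yielding (a)$\Leftrightarrow$(b)), and invoke Duflo's theorem to reduce (a)$\Leftrightarrow$(c) to the statement that $\chi_\mu^\oo(\Omega)\neq 0$ if and only if $\mu$ is typical. The only difference lies in how this last fact is established. The paper simply cites \cite[Subsection~4.2]{Go}, where the Harish-Chandra projection of $\Omega$ is computed to be $\prod_{\alpha\in\Phi_{\bar 1}^+}(\mu+\rho,\alpha)$. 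Your first sketch (evaluate $X^+X^-$ on the highest weight vector of a Verma input) is in effect how that result is proved. Your second sketch, via Zariski density of dominant integral atypical weights in each odd hyperplane together with Kac's classical simplicity criterion, is a genuine and valid alternative; it trades the explicit PBW computation for the degree bound $\deg_\mu \chi_\mu^\oo(\Omega)\le \dim\mathfrak{g}_1$, which you assert but should justify (it follows from the fact that $\Omega$ arises by successively commuting the $\dim\mathfrak{g}_1$ factors of $X^+$ past those of $X^-$ in \eqref{Eq::Omega}).
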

  
\begin{proof}
By Theorem~\ref{Dufthm}, there exists $\la \in \mf h^*$ such that $$\text{Ann}_{U_\oo}(V) = \text{Ann}_{U_\oo}(W) = \text{Ann}_{U_\oo}(V(\la)).$$
In particular, all these annihilators contain $U(\mf g_\oo)\chi_{\la}^{\oo}$. Therefore $\Omega$ acts as the same scalar $\chi_{\la}^{\oo}(\Omega)$ on both $V$ and $W$. Therefore \eqref{aaa1.1} and \eqref{aaa1.2} are equivalent by Theorem \ref{thm::criforsimpleKac}. 
   
The fact that $\chi_{\la}^{\oo}(\Omega)\neq 0$ if and only if $\la$ is typical follows
from \cite[Subsection~4.2]{Go} which says that the evaluation at  $\la$ of 
the Harish-Chandra projection of $\Omega$ has the form
\begin{displaymath}
\prod_{\alpha\in\Phi^+_{\bar{1}}}(\la+\rho,\alpha).
\end{displaymath}
This completes the proof.
\end{proof}

It is natural to consider also the simplicity problem for the opposite Kac module $K'(V)$ of $V$. 
Recall that $2\rho_{\one}$ denotes the sum of all odd positive roots. If $\la$ is such that $V= V(\la)$ is finite-dimensional, then it is well-known that 
the simplicity of $K(\la)$ is equivalent to the simplicity 
of $K'(\la - 2\rho_{\one})$, which is equivalent to the typicality of $\la$,
see e.g. \cite[Lemma 3.3.1]{Ge98} and \cite{Ka78}. The highest weight 
$\mf g_\oo$-supermodule $V(2\rho_{\one})$ is one-dimensional and hence can be denoted by
$\mathbb{C}_{2\rho_{\one}}$. We now extend the comparison of simplicity of Kac modules and
opposite Kac modules to full generality.

\begin{cor} \label{cor::criforsimpleoppoKac}
Let $V$ be a simple $\mf g_\oo$-supermodules.  Then the following are equivalent:
\begin{enumerate}[$($a$)$]
\item \label{Cor::Cri2::KVsimple} $K'(V)$ is simple.
\item \label{Cor::Cri2::KWsimple} $K(V\otimes\mathbb{C}_{2\rho_{\one}})$ is simple.
\item \label{cor::crif3} $\mathrm{Ann}_{U_\oo}(V)=\mathrm{Ann}_{U_\oo}(V(\la-2\rho_{\one}))$, for some typical $\la$.
\end{enumerate}
If any of the above conditions is satisfied, then $K'(V) \cong K(V\otimes\mathbb{C}_{2\rho_{\one}})$.
\end{cor}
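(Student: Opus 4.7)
The strategy is to combine Corollary~\ref{KacOppoKac} with Corollary~\ref{aaa1}, using the $\mf g_\oo$-isomorphism $\Lambda^{\mathrm{max}}(\mf g_1)\cong \mathbb{C}_{2\rho_{\one}}$. Indeed, $\Lambda^{\mathrm{max}}(\mf g_1)$ is one-dimensional with $\mf h$-weight equal to the sum of all positive odd roots, namely $2\rho_{\one}$, and this weight is trivial on $[\mf g_\oo,\mf g_\oo]$, so it genuinely defines a one-dimensional $\mf g_\oo$-supermodule isomorphic to $\mathbb{C}_{2\rho_{\one}}$.

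To establish \eqref{Cor::Cri2::KVsimple}$\Leftrightarrow$\eqref{Cor::Cri2::KWsimple} together with the final isomorphism, I would apply Corollary~\ref{KacOppoKac}\eqref{KacOppoKac.1}--\eqref{KacOppoKac.2} with $W=V$ and first input $V\otimes \mathbb{C}_{2\rho_{\one}}$. This produces a non-zero $\mf g$-homomorphism
\[
\phi\colon K(V\otimes \mathbb{C}_{2\rho_{\one}})\longrightarrow K'(V)
\]
with $\mathrm{Im}(\phi)=\mathrm{Soc}(K'(V))$. By Theorem~\ref{MainThm1}\eqref{MainThm1.2} combined with Theorem~\ref{Thm::CoindisoInd}, this socle is isomorphic to $L(V\otimes \mathbb{C}_{2\rho_{\one}})$, which is also the simple top of the source. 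Hence \eqref{Cor::Cri2::KVsimple} amounts to surjectivity of $\phi$, while \eqref{Cor::Cri2::KWsimple} amounts to injectivity of $\phi$. To match these two conditions, I would pass to the $d^{\mf g}$-eigenspace decomposition: the $(-k)$-th eigenspace of the source is $\Lambda^k(\mf g_{-1})\otimes V\otimes \mathbb{C}_{2\rho_{\one}}$, and the corresponding eigenspace of the target, determined using $2\rho_{\one}(d^{\mf g})=-\dim\mf g_1$, is $\Lambda^{\dim\mf g_1-k}(\mf g_1)\otimes V$. Via the duality $\mf g_{-1}\cong \mf g_1^{\ast}$ of $\mf g_\oo$-modules together with Hodge duality for the exterior algebra, these two eigenspaces become isomorphic as $\mf g_\oo$-supermodules, and both have finite length (being finite-dimensional tensored with a simple module). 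By the Fitting lemma, any injective $\mf g_\oo$-endomorphism of such a module is automatically surjective. Therefore $\phi$ is injective on each eigenspace if and only if it is surjective there, which yields the equivalence \eqref{Cor::Cri2::KVsimple}$\Leftrightarrow$\eqref{Cor::Cri2::KWsimple} and simultaneously shows that $\phi$ is an isomorphism under either condition.

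For \eqref{Cor::Cri2::KWsimple}$\Leftrightarrow$\eqref{cor::crif3}, I would apply Corollary~\ref{aaa1} to the simple $\mf g_\oo$-supermodule $V\otimes \mathbb{C}_{2\rho_{\one}}$: the module $K(V\otimes \mathbb{C}_{2\rho_{\one}})$ is simple iff $\mathrm{Ann}_{U_\oo}(V\otimes \mathbb{C}_{2\rho_{\one}})=\mathrm{Ann}_{U_\oo}(V(\mu))$ for some typical $\mu$. The operation $-\otimes \mathbb{C}_{2\rho_{\one}}$ is implemented by the $U_\oo$-automorphism sending $x\mapsto x+2\rho_{\one}(x)$ for $x\in \mf g_\oo$; it sends $V(\nu)$ to $V(\nu+2\rho_{\one})$ and transports annihilator equalities accordingly, so the above condition becomes $\mathrm{Ann}_{U_\oo}(V)=\mathrm{Ann}_{U_\oo}(V(\mu-2\rho_{\one}))$, which upon writing $\lambda:=\mu$ is exactly \eqref{cor::crif3}.

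The main obstacle is the piece-wise injective-iff-surjective step in the second paragraph. When $V$ is infinite-dimensional the total vector-space dimensions of the graded components are both infinite, so the usual linear-algebra dimension match is insufficient; one must instead exploit the $\mf g_\oo$-module structure on each eigenspace and rely on the finite-length Fitting-type argument. In particular, the non-trivial technical content is verifying that under the weight identification the $(-k)$-th eigenspace of the source and the $(\dim\mf g_1-k)$-th eigenspace of the target are isomorphic as $\mf g_\oo$-super\-modules via the pairing $\mf g_{-1}\cong\mf g_1^{\ast}$ and the Hodge-type duality for exterior powers.
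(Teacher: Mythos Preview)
Your treatment of \eqref{Cor::Cri2::KWsimple}$\Leftrightarrow$\eqref{cor::crif3} via the twisting automorphism $x\mapsto x+2\rho_{\one}(x)$ is correct and matches the paper, which simply invokes Corollary~\ref{aaa1}. For \eqref{Cor::Cri2::KVsimple}$\Leftrightarrow$\eqref{Cor::Cri2::KWsimple}, however, you take a genuinely different route from the paper, and your version leans on a technical point that is not as innocent as you make it sound.

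The paper constructs \emph{two} maps: your $\phi=\varphi\colon K(V\otimes\mathbb{C}_{2\rho_{\one}})\to K'(V)$ together with a companion $\psi\colon K'(V)\to K(V\otimes\mathbb{C}_{2\rho_{\one}})$ in the opposite direction, again with image equal to the socle. If either module is simple, one checks that $\varphi\circ\psi$ and $\psi\circ\varphi$ restrict to isomorphisms on the two \emph{extremal} $d^{\mf g}$-eigenspaces; since $K'(V)$ is generated by one extremal eigenspace and $K(V\otimes\mathbb{C}_{2\rho_{\one}})$ by the other, and both modules are indecomposable, this forces $\varphi$ and $\psi$ to be isomorphisms. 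The key point is that the extremal eigenspaces are \emph{simple} $\mf g_\oo$-modules, so Schur's lemma does all the work and no length considerations beyond ``simple'' are ever needed.

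Your approach instead works eigenspace by eigenspace for every $k$, identifying $\Lambda^k(\mf g_{-1})\otimes V\otimes\mathbb{C}_{2\rho_{\one}}$ with $\Lambda^{\dim\mf g_1-k}(\mf g_1)\otimes V$ via Hodge duality, and then asserting ``injective $\Leftrightarrow$ surjective'' between isomorphic modules. The Hodge identification is fine, but that equivalence requires the modules to have \emph{finite length} over $\mf g_\oo$. For an \emph{arbitrary} simple $\mf g_\oo$-supermodule $V$ (which is the whole point of this paper --- $V$ is not assumed to lie in any category $\mathcal O$), the claim that $E\otimes V$ has finite length for finite-dimensional $E$ is true for reductive $\mf g_\oo$, but it is not elementary and is neither proved nor cited anywhere in the paper; your parenthetical ``being finite-dimensional tensored with a simple module'' is not a justification. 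Note that one direction is easy: surjective $\Rightarrow$ injective follows from the Hopfian property of Noetherian modules, so \eqref{Cor::Cri2::KVsimple}$\Rightarrow$\eqref{Cor::Cri2::KWsimple} goes through. It is the co-Hopfian direction (injective $\Rightarrow$ surjective) that genuinely needs Artinianity. The paper's two-map argument sidesteps this issue completely, which is its main advantage. If you wish to keep the single-map approach, you should either supply a reference for the finite-length fact or, more simply, borrow the paper's trick of using only the extremal eigenspaces together with generation and indecomposability.
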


\begin{proof} 	
An analogue of the decomposition  \eqref{Eq::GrOnKV} for $K'(V)$ yields existence of a non-zero homomorphism 
$\varphi:K(V\otimes\mathbb{C}_{2\rho_{\one}})\to K'(V)$ whose image coincides with the socle of $K'(V)$ by
Lemma~\ref{Kacsimplesoc}. A similar argument gives a non-zero homomorphism 
$\psi:K'(V)\to K(V\otimes\mathbb{C}_{2\rho_{\one}})$ whose image coincides with the socle of
$K(V\otimes\mathbb{C}_{2\rho_{\one}})$. If any of $K'(V)$ or $K(V\otimes\mathbb{C}_{2\rho_{\one}})$ is simple, then
both $\varphi\circ\psi$ and $\psi\circ\varphi$ are isomorphisms when restricted to the eigenspaces of 
both extremal eigenvalues
of $d^{\mf g}$. As $K'(V)$ is generated by one of these extremal eigenspaces and $K(V\otimes\mathbb{C}_{2\rho_{\one}})$
is generated by the other one, we obtain that both $\varphi$ and $\psi$ are isomorphism. 
The equivalence between \eqref{Cor::Cri2::KVsimple} and \eqref{Cor::Cri2::KWsimple} follows.
The equivalence between \eqref{cor::crif3} and \eqref{Cor::Cri2::KWsimple} follows from 
Corollary~\ref{aaa1}.
\end{proof}

\subsubsection{} \label{Criforpn} 
Here we discuss the periplectic Lie superalgebra $\mf p(n)$ which has 
been excluded in the previous parts of this subsections. For $\mf p(n)$ we can also define 
the Cartan subalgebra $\mf h_{\mf p(n)}:= \mf p(n)\cap \h$, the Borel subalgebra 
$\mf b_{\mf p(n)} = \mf b_{\mf p(n)_0} \oplus \mf p(n)_1$ and the corresponding BGG categories $\mc O$ 
and $\mc O_\oo $.

For  $\mf p(n)$, the principal difficulty is the asymmetry of negative and 
positive roots. However, in \cite[Corollary 5.8]{Se02},
it is shown that, for a simple $\mf p(n)_0$-supermodule 
$V$, the corresponding Kac module $K(V)$ is simple if $V$ admits a typical central character, 
which is a $\mf p(n)$-analog of our Corollary~\ref{aaa1}. 
This asymmetry of positive and negative roots makes the opposite Kac modules 
always non-simple. It also enables us to construct indecomposable modules from 
the difference between Kac and opposite Kac modules. 
	
\begin{prop} \label{OpKpnprop}
Let $V$ be a simple $\mf p(n)_0$-supermodule. Then $K'(V)$ is indecomposable but not simple.
\end{prop}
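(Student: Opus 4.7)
The plan is to prove both assertions by exploiting the distinguishing feature of the periplectic case, namely the numerical asymmetry $\dim \mf p(n)_{1}=\binom{n+1}{2}>\binom{n}{2}=\dim \mf p(n)_{-1}$ arising from $\mf p(n)_{1}\cong S^{2}(\mathbb{C}^{n})$ and $\mf p(n)_{-1}\cong \Lambda^{2}((\mathbb{C}^{n})^{*})$. For indecomposability, I would invoke the $K'$-analogue of Lemma~\ref{Kacsimplesoc}: the proofs of Lemma~\ref{newl1} and Lemma~\ref{Kacsimplesoc} never exploit any symmetry between $\mf g_{1}$ and $\mf g_{-1}$, so their statements transfer verbatim to $K'(V)$ whenever $\mf g$ admits a compatible $\mathbb{Z}$-grading. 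This yields $\mathrm{Soc}(K'(V))=U(\mf g)\cdot\bigl(\Lambda^{\max}(\mf g_{1})\otimes V\bigr)$ as a simple $\mf g$-submodule, already forcing $K'(V)$ to be indecomposable.

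The non-simplicity part amounts to showing that this socle is a proper submodule of $K'(V)$, which I would do via a degree count against the grading operator $d^{\mf p(n)}$. Decomposing $K'(V)\cong \bigoplus_{k=0}^{\dim \mf g_{1}}\Lambda^{k}(\mf g_{1})\otimes V$ into $d^{\mf p(n)}$-eigenspaces of eigenvalue $d^{\mf p(n)}_{V}+2k$, I would observe that on the top piece $\Lambda^{\max}(\mf g_{1})\otimes V$ the action of $\mf g_{1}$ vanishes by maximality, the action of $\mf g_{0}$ preserves the degree, and each $y\in \mf g_{-1}$ strictly lowers the $\Lambda(\mf g_{1})$-degree by exactly one. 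The last point follows from the PBW identity
\begin{equation*}
y\cdot(x_{1}\wedge\cdots\wedge x_{k}\otimes v)=\sum_{i}(-1)^{i-1}x_{1}\wedge\cdots\wedge\widehat{x_{i}}\wedge\cdots\wedge x_{k}\otimes [y,x_{i}]v
\end{equation*}
together with the fact that $\mf g_{-1}\cdot V=0$ inside $K'(V)$ by construction. Since $U(\mf g_{-1})=\Lambda(\mf g_{-1})$ has maximal exterior degree $\dim\mf g_{-1}$, iterating gives
\begin{equation*}
U(\mf g)\cdot\bigl(\Lambda^{\max}(\mf g_{1})\otimes V\bigr)\subseteq \bigoplus_{k=\dim \mf g_{1}-\dim \mf g_{-1}}^{\dim \mf g_{1}}\Lambda^{k}(\mf g_{1})\otimes V,
\end{equation*}
which is a proper subspace of $K'(V)$ because $\dim \mf g_{1}-\dim \mf g_{-1}=n>0$.

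No serious obstacle is anticipated: the whole argument reduces to a graded-degree count enabled by the numerical asymmetry specific to $\mf p(n)$, and the only small verification is the displayed PBW formula for the $\mf g_{-1}$-action on $K'(V)$, which is entirely parallel to the analysis of the $\mf g_{-1}$-action on Kac modules already used in Lemma~\ref{newl1}.
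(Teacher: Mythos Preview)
Your argument is correct and matches the paper's approach: both exhibit a nonzero submodule of $K'(V)$ generated by $\Lambda^{\max}(\mf g_1)\otimes V$ and show it is proper via a $\mathbb{Z}$-degree comparison exploiting $\dim\mf g_1>\dim\mf g_{-1}$. The paper packages this submodule as the image of a homomorphism $K(\Lambda^{\max}(\mf g_1)\otimes V)\to K'(V)\langle\dim\mf g_1\rangle$ obtained from adjunction and then compares minimal graded components of source and target, but the content is the same as your direct analysis of the socle.

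One small correction: your displayed PBW identity for the action of $y\in\mf g_{-1}$ is incomplete, since commuting the even element $[y,x_i]\in\mf g_0$ past $x_{i+1},\dots,x_k$ produces additional terms involving $[[y,x_i],x_l]\in\mf g_1$. This does not affect the argument, because the degree-lowering claim you actually need follows directly from the $d^{\mf p(n)}$-eigenspace decomposition you already set up (acting by $y$ shifts the eigenvalue by $-2$, hence the $\Lambda(\mf g_1)$-degree by $-1$); the explicit formula is unnecessary and can simply be dropped.
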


\begin{proof} Let us denote $\mf p(n)$ by $\mf g$ in this proof. 
Consider $\mathfrak{g}\text{-}\mathrm{mod}^{\mathbb{Z}}$. 
By the universal property of induced modules, 
in $\mathfrak{g}\text{-}\mathrm{mod}^{\mathbb{Z}}$ we have a non-zero
homomorphism $f$ from $K(\Lambda^{\text{max}} (\mf g_1) \otimes V)$
to $K'(V)\langle\dim(\mf g_1)\rangle$. Note that the minimal non-zero homogeneous component
of $K(\Lambda^{\text{max}} (\mf g_1) \otimes V)$ has degree $-\dim(\mf g_{-1})$
while the minimal non-zero homogeneous component
of $K'(V)\langle\dim(\mf g_1)\rangle$ has degree $-\dim(\mf g_{1})$ which is strictly smaller than 
$-\dim(\mf g_{-1})$.
Therefore $f$ cannot be surjective. This implies that $K'(V)$ is not simple.

The fact that $K'(V)$ is indecomposable is proved in Lemma~\ref{Kacsimplesoc}.
\end{proof}

\section{Rough structure of Kac modules}\label{s5}

\subsection{Coker-categories}\label{s5.1}

In this section, we assume that $\mf g=\mathfrak{gl}(m|n)$. For a $\mf g$-supermodule $P$, we denote 
by $\mc C_P$ the {\em coker}-category of $P$, that is $\mc C_P$ is the full subcategory of the 
category of all $\mf g$-supermodules, which consists of all modules $M$ which have a presentation 
$$X\to Y \twoheadrightarrow M,$$ where $X$ and $Y$ are isomorphic to direct summands of 
$P\otimes E$, for some finite dimensional weight $\mf g$-supermodule $E$. 
Similarly, we define {\em coker}-categories for modules over Lie algebras, see e.g. 
\cite{MaSt08}.
  
In this section we describe a part of the structure of Kac modules with arbitrary simple input,
called the {\em rough structure} in \cite{MaSt08} by comparing it with the rough structure
of Kac modules in BGG category  $\mc O$.

\subsection{Harish-Chandra bimodules}\label{Subsection::HCB}
In this subsection we collect all necessary preliminaries about 
the main technical ingredient in the study of rough structure, 
namely, about Harish-Chandra bimodules. 
 
\subsubsection{} Here we introduce Harish-Chandra bimodules.  Let us start with $U_\oo$. 
The full subcategory of $\mf g_\oo$-mod which consists of all finite-dimensional 
weight modules is denoted by $\mc F_\oo$.
Each $U_\oo$-$U_\oo$-bimodule $M$ can be considered as a $\mf g_\oo$-module $M^{\ad}$ with respect
to the adjoint action of $\mf g_\oo$.
The category $\mc H_\oo$ of {\em Harish-Chandra $U_\oo$-$U_\oo$-bimodules} is defined as the
full subcategory in the category of all finitely generated $U_\oo$-$U_\oo$-bimodules which consists of
all bimodules $M$ such that the $\mf g_\oo$-module $M^{\ad}$  is a direct sum of simples in 
$\mc F_\oo$, moreover, each simple appears in $M^{\ad}$ with a finite multiplicity.
For two $\mf g_\oo$-supermodules $M$ and $N$, we denote by $\mathcal{L}(M,N)$ the  $U_\oo$-$U_\oo$-bimodule
of all linear maps from  $M$ to $N$ which are locally finite with respect to the adjoint action of 
$\mf g_\oo$.

The category $\mc H$ of {\em Harish-Chandra $U$-$U$-bimodules} is the full subcategory of the category of 
$U$-$U$-bimodules which consists of all bimodules $M$ whose restriction to 
$U_\oo$-$U_\oo$-bimodules is in $\mc H_\oo$, see \cite[Section 5.1]{MaMe12}. 
Abusing notation, for two $\mf g$-supermodules $M$ and $N$, we denote by $\mathcal{L}(M,N)$ the  $U$-$U$-bimodule
$\mathcal{L}(\text{Res}_{\mf g_\oo}^{\mf g}(M),\text{Res}_{\mf g_\oo}^{\mf g}(N))$.

The full subcategory of $\mf g$-smod which consists of all finite-dimensional 
weight supermodules is denoted by $\mc F$.
For $E \in \mc F$, we define a $\mf g$-bimodule structure on $E\otimes U$ as in \cite[Section 2.2]{BeGe} and \cite[Section 2.4]{Co16}: 
\[ X(v\otimes u)Y = (Xv)\otimes (uY) +(-1)^{\ov X\cdot \ov v}v\otimes (XuY),\]
for all homogeneous $X,Y\in \mf g$, $v\in E$ and $u \in U$.
The following identity is proved in \cite[Section 2.2]{BeGe} in the setup of Lie algebras, however,
the same proof works also for Lie superalgebras:
\begin{align}\label{Eq::1stBeGeIdentity}
\text{Hom}_{U\text{-mod-}U}(E\otimes U, M)\cong \text{Hom}_{U}(E, M^{\ad}). 
\end{align}

\subsubsection{} Let $M$ be a $\mf g$-supermodule, then the $\mf g$-action on $M$ defines a $U$-$U$-homomorphism from $U$ to $\mc L(M,M)$. The kernel of this homomorphism is  $\text{Ann}_U(M)$ and  we have the following embedding of 
$U$-$U$-bimodules: \[U/\text{Ann}_U(M) \hookrightarrow \mc L(M,M).\] 
One says that {\em Kostant's problem} for $M$ has a positive solution if the above embedding is 
an isomorphism, see \cite{Jo80,Go02,MaMe12}. By \cite[Proposition 9.4]{Go02}, which can be applied
as we assumed $\mf g=\mathfrak{gl}(m|n)$, 
Kostant's problem has a positive solution for all typical Verma modules.
We note that  \cite[Proposition 9.4]{Go02} is formulated for {\em strongly typical} Verma modules,
however, for $\mf g=\mathfrak{gl}(m|n)$ the notions of ``typical'' and ``strongly typical''
coincide,  see \cite[Subsection~2.5.5]{Go02}.
 
\subsection{Coker categories for Kac modules} \label{Sect::CateOfKacandEqiv} 
This subsection  generalizes \cite[Section 11.6]{MaSt08}. 
Following \cite[Remark~76]{MaSt08}, for simplicity, we will work with regular integral central characters.
The general case follows from the integral and regular one by standard techniques, in particular using translations
out and on the walls and the equivalences from \cite{ChMaWa13}.

Recall that ${\mf s}=[{\mf g_\oo},{\mf g_\oo}]$. Let $V$ be a simple $\mf g_\oo$-supermodule such that 
$L:= \text{Res}_{\mf s}^{\mf g_\oo}(V)$ admits a regular and integral central character. 
Observe that every simple $\mf g_\oo$-supermodule $S$ is determined uniquely by the 
underlying simple $\mf s$-supermodule $\text{Res}_{\mf s}^{\mf g_\oo}(S)$ and a linear 
functional (depending on $S$) on $\mf z(\mf g_\oo)$. Abusing notation, we use $\cdot$ to denote the 
$W$-action for $\mf s$, that is, $$w\cdot\la=w(\la+\rho_{\oo})-\rho_{\oo},$$ for 
all $w\in W$ and $\la\in \mf h^*_{\mf s}$. By Theorem~\ref{Dufthm},  there is 
a dominant weight $\nu$ and $\sigma\in W$ such that 
$\text{Ann}_{U(\mf s)} (L) = \text{Ann}_{U(\mf s)}V(\sigma\cdot \nu)$. 
We may assume that $\sigma$ is contained in a right cell associated with a parabolic 
subalgebra $\mf p \subseteq \mf s$ as in \cite[Remark 14]{MaSt08}. Therefore there 
is a dominant weight $\mu$ such that the parabolic block $\mc O^{\mf p}_{\mu}$ 
contains exactly one simple module $V(y\cdot \mu)$, and this module is projective 
(see, e.g., \cite[3.1]{IrSh88}). Tensoring, if necessary, with finite dimensional modules,
without loss of generality we may assume that 
$\mu$ is typical and {\em generic} in the sense of \cite[Subsection~5.3]{MaMe12}.  
Let $F$ be the projective functor given in 
\cite[Proposition 61]{MaSt08} and define $\ov N$ to be the simple quotient of
$FL$ (in fact, as it turns out, $\ov N=FL$). We refer the reader to 
\cite[Section~11]{MaSt08} for more details of our setup. In particular, we have that 
$$\text{Ann}_{U(\mf s)}(\ov N) = \text{Ann}_{U(\mf s)}( V(y\cdot \mu))= \text{Ann}_{U(\mf s)}( V(\mu))$$
and, consequently, $\mathrm{Ann}_U(K(\mu))=\mathrm{Ann}_U(K(y\cdot \mu))$, see Lemma~\ref{AnnKac}.

\begin{thm} \label{thm::g0CokerEquiv}\emph{(}\cite[Theorem 66]{MaSt08}\emph{)} 
The functor
$$\Xi_\oo: = \mc L(\ov N, -)\otimes_{U(\mf s)} V(y\cdot \mu): \mc C_{\ov N} \rightarrow \mc C_{V(y\cdot \mu)},$$ 
is an equivalence.
\end{thm}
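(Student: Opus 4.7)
The strategy is to construct an explicit quasi-inverse
\[
\Psi_\oo := \mc L(V(y\cdot \mu), -) \otimes_{U(\mf s)} \bar N : \mc C_{V(y\cdot \mu)} \to \mc C_{\bar N},
\]
and verify the equivalence by checking that the adjunction unit and counit are isomorphisms on the natural generators $\bar N$ and $V(y\cdot \mu)$, then extending to all objects by a right-exactness/five-lemma argument over the presentations that define the coker-categories.

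The first and crucial step is the computation $\Xi_\oo(\bar N) \cong V(y\cdot\mu)$ (and symmetrically $\Psi_\oo(V(y\cdot\mu)) \cong \bar N$). This relies on Kostant's problem: since $\mu$ is typical, and for $\mf{gl}(m|n)$ typicality coincides with strong typicality as noted in Subsection~\ref{Subsection::HCB}, \cite[Proposition 9.4]{Go02} gives that Kostant's problem has a positive solution for $V(y\cdot\mu)$, so $\mc L(V(y\cdot\mu),V(y\cdot\mu)) \cong U(\mf s)/\mathrm{Ann}_{U(\mf s)}(V(y\cdot\mu))$. Combined with the equality $\mathrm{Ann}_{U(\mf s)}(\bar N) = \mathrm{Ann}_{U(\mf s)}(V(y\cdot\mu))$ established before the theorem, together with the description of $\bar N$ as the simple top of $FL$ for the projective functor $F$ from \cite[Proposition~61]{MaSt08}, one identifies the relevant tensor products via the bimodule-Hom adjunction underlying \eqref{Eq::1stBeGeIdentity}.

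The second step is to show that $\Xi_\oo$ and $\Psi_\oo$ commute with tensoring by a finite-dimensional weight module $E \in \mc F_\oo$, via the natural isomorphism $\mc L(X, Y \otimes E) \cong \mc L(X, Y) \otimes E$ coming from \eqref{Eq::1stBeGeIdentity}. Together with Step~1, this yields $\Xi_\oo(\bar N \otimes E) \cong V(y\cdot \mu) \otimes E$, so $\Xi_\oo$ sends the generating family $\{\bar N \otimes E\}$ to $\{V(y\cdot\mu) \otimes E\}$ (and analogously for $\Psi_\oo$). Since every object of $\mc C_{\bar N}$ admits a presentation $X \to Y \twoheadrightarrow M$ by direct summands of such $\bar N \otimes E$, and $\Xi_\oo$ is right exact (being a Hom-functor followed by a tensor product), one compares the images of these presentations and invokes the five lemma to conclude that the natural transformations $\Psi_\oo\Xi_\oo \Rightarrow \mathrm{Id}$ and $\Xi_\oo\Psi_\oo \Rightarrow \mathrm{Id}$ are isomorphisms on all objects.

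The main obstacle is Step~1: verifying that the natural map $\Xi_\oo(\bar N) \to V(y\cdot\mu)$, arising from the $\mc L(\bar N,\bar N)$-action on $\bar N$ together with the adjunction, is actually an isomorphism. This is where the rigidity of the generic parabolic block $\mc O^{\mf p}_\mu$ — in which $V(y\cdot\mu)$ is the unique simple and is projective — becomes essential: it forces the candidate map to be non-zero and, together with Kostant's problem for $V(y\cdot\mu)$ and the coincidence of annihilators, then forces it to be an isomorphism. All other steps are fairly formal manipulations with Harish-Chandra bimodules once this key identification is in place.
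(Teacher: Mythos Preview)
The paper does not prove this theorem at all: it is simply quoted as \cite[Theorem~66]{MaSt08}, and everything here concerns modules over the semisimple Lie algebra $\mf s=[\mf g_\oo,\mf g_\oo]$, with no superalgebra input. So there is no ``paper's own proof'' to compare against, and the right thing to do is either cite \cite{MaSt08} or reproduce that argument correctly.

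Your sketch is in the right spirit but has a genuine gap in Step~1. You invoke \cite[Proposition~9.4]{Go02} for a positive solution to Kostant's problem for $V(y\cdot\mu)$; that proposition is about \emph{Verma supermodules over the Lie superalgebra $\mf g$}, not about $\mf s$-modules, and the remark about typical versus strongly typical is irrelevant here. The results you actually need are \cite[Theorem~60(iii)]{MaSt08} (Kostant's problem for $V(y\cdot\mu)$) and \cite[Proposition~65]{MaSt08} (Kostant's problem for $\bar N$); indeed, to compute $\Xi_\oo(\bar N)=\mc L(\bar N,\bar N)\otimes_{U(\mf s)}V(y\cdot\mu)$ it is Kostant's problem for $\bar N$, not for $V(y\cdot\mu)$, that identifies $\mc L(\bar N,\bar N)\cong U(\mf s)/\mathrm{Ann}_{U(\mf s)}(\bar N)=U(\mf s)/\mathrm{Ann}_{U(\mf s)}(V(y\cdot\mu))$, whence the tensor product is $V(y\cdot\mu)$. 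Symmetrically, $\Psi_\oo(V(y\cdot\mu))\cong\bar N$ uses Kostant's problem for $V(y\cdot\mu)$.

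A second, smaller gap: you assert that $\Xi_\oo$ is right exact ``being a Hom-functor followed by a tensor product'', but $\mc L(\bar N,{}_-)$ is not automatically exact. What makes it exact on $\mc C_{\bar N}$ is that $\bar N$ is \emph{projective} in $\mc C_{\bar N}$ (this is the $\mf s$-analogue of Lemma~\ref{nlem17} and is part of \cite{MaSt08}); you should state and use this. With these two corrections your outline matches the strategy of \cite{MaSt08} (and of the super version in Theorem~\ref{Thm::Equi} of the present paper).
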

  
We extend the categories $\mc C_{\ov N}$ and $\mc C_{V(y\cdot \mu)}$ of $\mf s$-supermodules to categories of $\mf g_\oo$-supermodules by allowing arbitrary scalar actions of $\mf z(\mf g_\oo)$. It is proved in \cite[Lemma 67]{MaSt08} that $\mc C_{\ov N}$ and $\mc C_{V(y\cdot \mu)}$ are both admissible in the sense of \cite[Section 6.3]{MaSt08}. 
  
Let $I:= \text{Ann}_U(K(\ov N))=\text{Ann}_U(K(y\cdot \mu))=\text{Ann}_U(K(\mu))$, see Lemma~\ref{AnnKac}. 
Denote by $\mc H_I^1$ the full subcategory of $\mc H$ which 
consists of all bimodules annihilated by $I$ from the right. 
Now we can formulate the following equivalence of coker-categories.

\begin{thm} \label{Thm::Equi}
Assume that the weight $\mu$ defined above is typical. Then there are equivalences of categories,
\begin{align} \label{thm::coker}
\mc C_{K(\overline{N})}\cong \mc H_I^1 \cong   \mc C_{K(y\cdot \mu)},
\end{align}
sending $K(\ov N)$ to $K(y\cdot \mu)$.
\end{thm}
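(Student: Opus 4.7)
The plan is to adapt the proof of the Lie algebra analogue \cite[Theorem~66]{MaSt08} to the super setting, using Harish-Chandra $U$-$U$-bimodules as a bridge between the two coker-categories. The typicality of $\mu$ is the crucial input: combining \cite[Proposition~9.4]{Go02} with the coincidence of typicality and strong typicality for $\mathfrak{gl}(m|n)$ (see \cite[Subsection~2.5.5]{Go02}), Kostant's problem has a positive solution for $K(\mu)$, yielding the identification $\mc L(K(\mu), K(\mu)) \cong U/I$.

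For the right-hand equivalence $\mc H_I^1 \cong \mc C_{K(y\cdot\mu)}$, I would construct the mutually inverse functors
\begin{align*}
\Phi \colon \mc C_{K(y\cdot\mu)} &\longrightarrow \mc H_I^1, & M &\longmapsto \mc L(K(\mu), M), \\
\Psi \colon \mc H_I^1 &\longrightarrow \mc C_{K(y\cdot\mu)}, & B &\longmapsto B \otimes_U K(y\cdot\mu).
\end{align*}
The bimodule $\Phi(M)$ is right-annihilated by $I$ because $I$ kills $K(\mu)$, and its Harish-Chandra property for $M \in \mc C_{K(y\cdot\mu)}$ follows from \eqref{Eq::1stBeGeIdentity} together with finite-dimensionality of weight spaces in objects of $\mc C_{K(y\cdot\mu)}$. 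To verify $\Phi\Psi \cong \mathrm{Id}$ and $\Psi\Phi \cong \mathrm{Id}$, it suffices to check this on the generators, using that $K(y\cdot\mu)$ is the image of $K(\mu)$ under the super-lift of the projective functor $F$ producing $\ov N$; the positive solution of Kostant's problem for $K(\mu)$ then gives $\Psi\Phi(K(y\cdot\mu)) \cong K(y\cdot\mu)$. Admissibility of $\mc C_{K(y\cdot\mu)}$ (the super-analog of \cite[Lemma~67]{MaSt08}) combined with the fact that projective functors $-\otimes E$ commute with both $\Phi$ and $\Psi$ propagates these isomorphisms to the whole category.

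The left-hand equivalence $\mc C_{K(\ov N)} \cong \mc H_I^1$ is obtained by the symmetric construction with $K(\ov N)$ in place of $K(y\cdot\mu)$ (noting that Lemma~\ref{AnnKac} gives $\text{Ann}_U(K(\ov N)) = I$), or alternatively by transporting the $\mf g_\oo$-level equivalence $\Xi_\oo$ from Theorem~\ref{thm::g0CokerEquiv} through the Kac functor. Since $K$ is exact and satisfies the projection identity $K(V)\otimes E \cong K(V \otimes \text{Res}^{\mf g}_{\mf g_\oo} E)$ for $E \in \mc F$, it sends admissible coker-categories to admissible coker-categories and in particular carries $\ov N$ to $K(\ov N)$ and $V(y\cdot\mu)$ to $K(y\cdot\mu)$. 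Composing the two equivalences gives the asserted correspondence $K(\ov N) \leftrightarrow K(y\cdot\mu)$.

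The main obstacle is verifying Kostant's positive solution for $K(\mu)$: although \cite[Proposition~9.4]{Go02} is stated for typical Verma supermodules, our $K(\mu)$ is a finite-dimensional simple Kac supermodule, and the passage requires the genericity of $\mu$ in the sense of \cite[Subsection~5.3]{MaMe12} together with Gorelik's equivalence for strongly typical central characters. A secondary technical point is the admissibility of the super coker-categories, which must be checked by adapting \cite[Lemma~67]{MaSt08} to weight supermodules.
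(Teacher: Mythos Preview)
Your overall architecture is right, but there are two genuine gaps.

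First, your pair of functors $\Phi=\mc L(K(\mu),{}_-)$ and $\Psi={}_-\otimes_U K(y\cdot\mu)$ uses \emph{different} modules on the two sides. The standard Bernstein--Gelfand machinery (and the paper's proof) uses the symmetric pair $\mc L(P,{}_-)$ and ${}_-\otimes_U P$ for the \emph{same} $P$, which then requires a positive solution to Kostant's problem for that specific $P$ together with projectivity of $P$ in $\mc C_P$. The paper therefore proves Kostant's problem not only for $K(\mu)$ but also for $K(y\cdot\mu)$ and, crucially, for $K(\ov N)$: this is done by dimension-comparison arguments of the form
\[
\dim\Hom_{\mf g}(E,\mc L(K(\ov N),K(\ov N)))=\dim\Hom_{\mf s}(\ov N\otimes E',\ov N)=\dim\Hom_{\mf s}(V(y\cdot\mu)\otimes E',V(y\cdot\mu)),
\]
reducing to the known positive solutions for $\ov N$ and $V(y\cdot\mu)$ on the $\mf s$-level \cite[Theorem~60, Proposition~65]{MaSt08}, and then back up via $K(y\cdot\mu)$ and the common annihilator $I$. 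Your proposal bypasses this step, and your appeal to a ``super-lift of the projective functor $F$'' taking $K(\mu)$ to $K(y\cdot\mu)$ is not made precise; without Kostant's for $K(\ov N)$ the left-hand equivalence via the ``symmetric construction'' you mention has no foundation. The paper also needs a separate lemma that $K(\ov N)$ is projective in $\mc C_{K(\ov N)}$, obtained from projectivity of $\ov N$ in $\mc C_{\ov N}$ via the Kac adjunction.

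Second, your alternative route ``transport $\Xi_\oo$ through the Kac functor'' fails as written because the projection identity $K(V)\otimes E\cong K(V\otimes\mathrm{Res}^{\mf g}_{\mf g_\oo}E)$ is false: the tensor identity for induction gives $\mathrm{Ind}^{\mf g}_{\mf g_{\geq 0}}(V\otimes\mathrm{Res}^{\mf g}_{\mf g_{\geq 0}}E)$, where $\mf g_1$ acts nontrivially on the $E$-factor, so this is not a Kac module but only admits a Kac \emph{filtration}. That is enough to see $K$ sends $\mc C_{\ov N}$ into $\mc C_{K(\ov N)}$, but not to transport the equivalence. A smaller point: $K(\mu)$ is not finite-dimensional here, and the passage from \cite[Proposition~9.4]{Go02} (which concerns Verma supermodules) to $K(\mu)$ uses \cite[6.9(10)]{Ja83}.
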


\begin{proof}
Our proof follows \cite[Theorem 5.9]{BeGe}, \cite[Theorem~5]{KhMa04} and \cite[Theorem~5.1]{MaMe12}. 
Note that the second equivalence in \eqref{thm::coker} is just a special case of the first one.
So, we just need to prove the first equivalence.

\begin{lem}\label{nlem11}
Kostant's problem for both $K(\mu)$ and $K(y\cdot \mu)$ has positive solutions. 
\end{lem}

\begin{proof}
For $K(\mu)$, the claim follows from \cite[Proposition~9.4]{Go02} and \cite[6.9 (10)]{Ja83}. 
For a given simple $\mf g$-supermodule $E\in \mc F$, we have 
\[ \dim\mathrm{Hom}_{\mf g}(K(\mu)\otimes E, K(\mu)) = \dim\mathrm{Hom}_{\mf g}(K(y\cdot \mu)\otimes E, K(y\cdot \mu)),\]
by a similar argument used in the proof of \cite[Lemma 70]{MaSt08} and \cite[Theorem 60]{MaSt08}. Hence 
\[ \dim\mathrm{Hom}_{\mf g}(E, \mc L(K(\mu), K(\mu))) = \dim\mathrm{Hom}_{\mf g}(E, \mc L(K(y\cdot \mu), K(y\cdot \mu))),\]
by \cite[6.8(3)]{Ja83}. Since Kostant's problem has a positive solution for $K(\mu)$, it follows that
\[ \dim\mathrm{Hom}_{\mf g}(E, U/I) = \dim\mathrm{Hom}_{\mf g}(E, \mc L(K(y\cdot \mu), K(y\cdot \mu))).\]
As $K(\mu)$ and $K(y\cdot \mu)$ have the same annihilators, it follows that
Kostant's problem has a positive solution for $K(y\cdot \mu)$.
\end{proof}

\begin{lem}\label{nlem15}
Kostant's problem for $K(\ov N)$ has a positive solution. 
\end{lem}

\begin{proof}
To see this, for any simple $\mf g$-supermodule $E\in \mc F$, we have 
\begin{align*}
&\dim\mathrm{Hom}_{\mf g} (E, \mc L(K(\ov N), K(\ov N)))=
\dim\mathrm{Hom}_{\mf g}(K(\ov N)\otimes E, K(\ov N))=
\dim\mathrm{Hom}_{\mf s}(\ov N\otimes E', \ov N),
\end{align*}
for some finite-dimensional $\mf s$-supermodule $E'$. By \cite[Theorem 60 (iii)]{MaSt08} and \cite[Proposition 65]{MaSt08}, Kostant's problem has positive solutions for $\ov N$ and $V(y\cdot \mu)$. Therefore 
\begin{align*}
&\dim\mathrm{Hom}_{\mf s}(\ov N\otimes E', \ov N)  \\
&=[\mc L(\ov N, \ov N): E'] \hskip 3cm (\text{by \cite[6.8(3)]{Ja83}})\\
&= [U(\mf s)/\text{Ann}_{U(\mf s)}(\ov N): E']  \hskip 1.5cm (\text{by \cite[Proposition 65]{MaSt08}})\\
& = [U(\mf s)/\text{Ann}_{U(\mf s)}( V(y\cdot \mu)): E']  \hskip 0.5cm (\text{by \cite[Proposition 65]{MaSt08}}) \\ 
&=  [\mc L( V(y\cdot \mu), V(y\cdot \mu)): E'] \hskip 1.1cm (\text{by \cite[Theorem 60 (iii)]{MaSt08}})\\
&= \dim\mathrm{Hom}_{\mf s}(V(y\cdot \mu)\otimes E',  V(y\cdot \mu)).
\end{align*}
Consequently, we obtain 
\begin{align}
\dim\mathrm{Hom}_{\mf g} (E, \mc L(K(\ov N), K(\ov N))) = \dim\mathrm{Hom}_{\mf g}(E, \mc L(K( y\cdot \mu), K( y\cdot \mu))).
\end{align}
Since Kostant's problem has a positive solution for $K(y\cdot \mu)$ by Lemma~\ref{nlem11}
and $K(\ov N)$ and $K(y\cdot \mu)$ have the same annihilators, we have
\[ \dim\mathrm{Hom}_{\mf g}(E, U/I) = \dim\mathrm{Hom}_{\mf g}(E, \mc L(K(y\cdot \mu), K(y\cdot \mu))).\]
This means that Kostant's problem has positive solution for $K(\ov N)$.	 
\end{proof}

\begin{lem}\label{nlem17}
The supermodule $K(\overline{N})$ is projective in $\mc C_{K(\overline{N})}$. 
\end{lem}

\begin{proof}
Let $M\in \mc C_{K(\overline{N})}$. By adjunction,  we have
\begin{displaymath}
\text{Hom}_{\mf g}(K(\ov N), M)\cong \text{Hom}_{\mf g_{\geq 0}}(\ov N, (M)^{\mf g_1}).
\end{displaymath}
Our dominance assumptions on $\mu$ imply that 
\begin{displaymath}
\text{Hom}_{\mf g_{\geq 0}}(\ov N, (M)^{\mf g_1})\cong \text{Hom}_{\mf g_\oo}(\ov N, M)
\end{displaymath}
and the claim follows from the fact that $\ov N$ is projective in $\mathcal{C}_{\ov N}$.	
\end{proof}

We want to show that the functors
\begin{equation}\label{Eq::MainEquiv_1} 
F:= -\otimes_U K(\ov N): \mc H_I^1 \rightarrow \mc C_{K(\overline{N})},\qquad
G:=\mathcal{L}(K(\overline{N}),{}_-):\mc C_{K(\overline{N})}\to \mathcal{H}_I^1
\end{equation}
are mutually inverse equivalences. 

We have $G(K(\overline{N}))\cong U/I\in \mathcal{H}_I^1$. Moreover, from Lemma~\ref{nlem17}
it follows by the same arguments as in \cite[6.9(9)]{Ja83} that $G$ is exact.
As $G$ commutes with tensoring with finite dimensional $\mathfrak{g}$-supermodules and all
projectives in $C_{K(\overline{N})}$ have, by definition, the form $E\otimes K(\overline{N})$,
for some finite dimensional $\mathfrak{g}$-supermodule $E$, it follows that 
$G$ sends $C_{K(\overline{N})}$ to $\mathcal{H}_I^1$, in particular, $G$ is well-defined.

As in \cite[6.22]{Ja83}, the functor $F$ is left adjoint to $G$, in particular, 
$F$ is also well-defined. Using Lemma~\ref{nlem15}, the claim that $F$ and $G$ are mutually inverse equivalences of 
categories follows similarly to \cite[Theorem 5.9]{BeGe}, \cite[Theorem~5]{KhMa04} and \cite[Theorem~5.1]{MaMe12}.
\end{proof}

\subsection{Rough structure of Kac modules}

We denote by $\Xi:= \mc L(K(\ov N), -)\otimes_U K(y\cdot \mu)$ 
the equivalence from $\mc C_{K(\ov N)}$ to $\mc C_{K(y\cdot \mu)}$ 
in Theorem \ref{Thm::Equi}. The functor $\Xi$ induces a bijection between 
the sets $\text{Irr}(\mc C_{K(\ov N)})$ 
and $\text{Irr}(\mc C_{K(y\cdot \mu)})$ of isomorphism classes of 
simple objects in 
$\mc C_{K(\ov N)}$ and $\mc C_{K(y\cdot \mu)}$, respectively. 
We note that simple objects in $\mc C_{K(\ov N)}$ and $\mc C_{K(y\cdot \mu)}$
are not necessarily simple as $\mf g$-supermodules. However, just as in \cite[Section~11]{MaSt08},
every simple object in 
$\mc C_{K(\ov N)}$ and $\mc C_{K(y\cdot \mu)}$ has simple top, as a $\mf g$-supermodule,
and the annihilator of the radical of a simple object is strictly bigger than that 
of the simple top.
Consequently, we have an induced bijection
\[ \hat\Xi:~\text{Irr}^{\mf g}(\mc C_{K(\ov N)})\rightarrow
\text{Irr}^{\mf g}(\mc C_{K(y\cdot \mu)}) \]
between the sets of isomorphism classes of simple $\mf g$-supermodule quotients
of simple objects in $\mc C_{K(\ov N)}$ and $\mc C_{K(y\cdot \mu)}$.

For  $L(V) \in \text{Irr}^{\mf g}(\mc C_{K(\ov N)})$, we define $\xi_V\in \mf h^*$ via 
\begin{align} &L(\xi_V) \cong \hat\Xi(L(V)), \end{align} 
in particular, we have $\xi_{\ov N} = y\cdot \mu$ since $\Xi K(\ov N) =K(y\cdot \mu)$.
We are now in a position to state the main result of this section which describes 
rough structure of Kac modules.

\begin{cor}\label{thmrough}
For $L(V),~L(W) \in \emph{Irr}^{\mf g}(\mc C_{K(\ov N)})$, we have the following 
multiplicity formula in the category of $\mf g$-supermodules:
\begin{align}\label{Eq::RoughStr}
[K(V):L(W)] =[K(\xi_V):L(\xi_W)].
\end{align}
\end{cor}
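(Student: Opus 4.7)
The plan is to transfer the multiplicity computation from the category of $\mf g$-supermodules to BGG category $\mc O$ by means of the equivalence $\Xi$ of Theorem~\ref{Thm::Equi}, and then use that on the $\mc O$-side the multiplicity $[K(\xi_V):L(\xi_W)]$ is well understood.

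First I would verify that both $K(V)$ and $K(\xi_V)$ lie in the relevant coker-categories $\mc C_{K(\ov N)}$ and $\mc C_{K(y\cdot\mu)}$. Since $L(V)\in\mathrm{Irr}^{\mf g}(\mc C_{K(\ov N)})$, the simple $\mf g_\oo$-input $V$ has the same $U(\mf s)$-annihilator as $\ov N$, so by Lemma~\ref{AnnKac} the Kac module $K(V)$ shares the annihilator $I$ with $K(\ov N)$. Moreover, $V$ can be obtained from $\ov N$ as a summand of a tensor product with a finite-dimensional $\mf g_\oo$-supermodule combined with an adjustment of the $\mf z(\mf g_\oo)$-scalar. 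Because the Kac functor commutes with tensoring with finite-dimensional $\mf g$-supermodules and with direct summands, $K(V)$ is a summand of some $K(\ov N)\otimes E$ with $E\in\mc F$, so $K(V)\in\mc C_{K(\ov N)}$. The parallel argument in $\mc O$ gives $K(\xi_V)\in\mc C_{K(y\cdot\mu)}$.

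Next I would prove the key identification $\Xi(K(V))\cong K(\xi_V)$. From the explicit formula $\Xi=\mc L(K(\ov N),-)\otimes_U K(y\cdot\mu)$ and the identity \eqref{Eq::1stBeGeIdentity}, one obtains a natural isomorphism $\Xi(M\otimes E)\cong \Xi(M)\otimes E$ for $M\in \mc C_{K(\ov N)}$ and $E\in\mc F$. Since $\Xi(K(\ov N))=K(y\cdot\mu)$, the equivalence $\Xi$ sends direct summands of $K(\ov N)\otimes E$ to the corresponding summands of $K(y\cdot\mu)\otimes E$. By construction of $\hat\Xi$, the simple $\mf g$-top of $\Xi(K(V))$ is $L(\xi_V)$, and on the $\mc O$-side one checks that $K(\xi_V)$ is the unique Kac summand of $K(y\cdot\mu)\otimes E$ whose top is $L(\xi_V)$. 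This pins down $\Xi(K(V))\cong K(\xi_V)$.

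Finally, I would reconcile composition multiplicities in the ambient module category with composition multiplicities in the coker-category. Every simple object $\tilde L(U)$ of $\mc C_{K(\ov N)}$ has simple $\mf g$-top $L(U)$ of annihilator $I$, whereas its radical has strictly larger annihilator. Consequently, any simple composition factor $L(W)$ of $K(V)$ of annihilator $I$ arises only at tops of factors in a Jordan--Hölder filtration of $K(V)$ inside $\mc C_{K(\ov N)}$, giving
\[
[K(V):L(W)]_{\mf g\text{-smod}}=[K(V):\tilde L(W)]_{\mc C_{K(\ov N)}},
\]
and the same on the $\mc O$-side. Exactness of the equivalence $\Xi$, combined with $\Xi(K(V))\cong K(\xi_V)$ and $\Xi(\tilde L(W))\cong \tilde L(\xi_W)$, then yields
\[
[K(V):\tilde L(W)]_{\mc C_{K(\ov N)}}=[K(\xi_V):\tilde L(\xi_W)]_{\mc C_{K(y\cdot\mu)}}=[K(\xi_V):L(\xi_W)]_{\mf g\text{-smod}},
\]
completing the proof. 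The main obstacle is the identification in step~2: showing that $\Xi$ sends the Kac module $K(V)$ on the nose to $K(\xi_V)$, rather than merely to some module sharing its simple top. This requires a careful analysis of the interaction of $\Xi$ with tensoring by $\mc F$ and with the Kac functor, in the spirit of \cite[Theorem~73]{MaSt08}.
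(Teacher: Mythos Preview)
Your proposal is correct and follows essentially the same approach as the paper: use the equivalence $\Xi$ of Theorem~\ref{Thm::Equi} to transport multiplicities, with the comparison between coker-category composition factors and honest $\mf g$-supermodule subquotients handled exactly as in \cite[Theorem~72]{MaSt08}. The paper's own proof is a one-line reference to $\Xi$ and \cite[Theorem~72]{MaSt08}, so you have supplied the details that the paper leaves implicit.

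One small imprecision: in your first step you assert that $V$ has the \emph{same} $U(\mf s)$-annihilator as $\ov N$. In the \cite{MaSt08} framework the simple tops of simple objects in $\mc C_{\ov N}$ range over a right cell, so their annihilators lie in the same two-sided cell but need not coincide with $\mathrm{Ann}_{U(\mf s)}(\ov N)$. What you actually need (and what suffices) is that $V$ is a direct summand of $\ov N\otimes E'$ for some finite-dimensional $E'$, which is precisely the content of $V\in\mathrm{Irr}^{\mf g_\oo}(\mc C_{\ov N})$; your conclusion $K(V)\in\mc C_{K(\ov N)}$ then follows as you say. This does not affect the validity of your argument, only the phrasing.
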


\begin{proof}
The two sides of the equality are matched using $\Xi$,
cf.  \cite[Theorem~72]{MaSt08}.
\end{proof}

Theorem~\ref{thmrough} says that the combinatorics of the rough structure of $K(V)$
only depends on the annihilator of $V$.
Depending on $V$, the rough structure of $K(V)$ might coincide, or not, with its fine structure.
In general, just like in \cite[Section~11]{MaSt08}, our approach and,
in particular, Theorem~\ref{thmrough} does not allow us to control possible simple 
subquotients of $K(V)$ whose annihilator is strictly bigger than that of $K(V)$.
Moreover, it is known that this fine structure of $K(V)$ really depends on $V$
and not just on the annihilator of $V$. Furthermore, in general, there is also a chance that
the module $K(V)$ might be non-artinian.

\subsection{Rough structure of simple $\mathfrak{gl}(m|n)$-supermodules} 
In this subsection, we obtain a similar description of the rough structure 
of restrictions to $\mf g_\oo$ of simple $\mf g$-supermodules in 
$\text{Irr}^{\mf g}(\mc C_{K(\ov N)})$. 

\subsubsection{}
Let $\chi: = \chi_{\mu}$ be the $\mf g$-central character of the typical weight $\mu$. Then $K(\ov N) = K(\ov N)_{\chi }$ and $\ov N  = \ov N_{\chi^{\oo}}$ since  $K(y\cdot \mu)\in  \mf g\text{-mod}_{\chi }$ and $V(y\cdot \mu) \in  \mf g\text{-mod}_{\chi^{\oo}}$.

\begin{thm}\label{Thm::GorelikEquivThm}\emph{(}\cite[Theorem 1.3.1]{Go02s}\emph{)}
The categories $\mf g_{\oo}\emph{-smod}_{\chi^{\oo}}$ and $\mf g\emph{-smod}_{\chi}$ are equivalent 
via the equivalences $(\emph{Ind}_{\mf g_\oo}^{\mf g})_{\chi}$ and 
$(\emph{Res}_{\mf g_\oo}^{\mf g})_{\chi^{\oo}}$.
\end{thm}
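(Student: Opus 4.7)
The plan is to prove this by verifying that the unit and counit of the standard adjunction between $\text{Ind}_{\mf g_\oo}^{\mf g}$ and $\text{Res}_{\mf g_\oo}^{\mf g}$ become natural isomorphisms after projecting onto the central character blocks $\chi^\oo$ and $\chi$, respectively. First I would use typicality: combining the assumption that $\mu$ is typical with Corollary~\ref{aaa1} and Theorem~\ref{MainThm1}, every simple object of $\mf g\text{-smod}_\chi$ has the form $L(V) = K(V)$ for a unique simple $\mf g_\oo$-supermodule $V$ with central character $\chi^\oo$. This bijection between simple objects on the two sides is compatible with the two functors, giving the combinatorial skeleton of the desired equivalence.

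The next step is to analyze $\text{Ind}(V) \cong \Lambda(\mf g_\one) \otimes V$ as a $\mf g_\oo$-supermodule. The action of $Z(\mf g_\oo)$ decomposes this space into a finite direct sum indexed by the central characters appearing among the $\mf g_\oo$-composition factors of the tensor product, while the action of $Z(\mf g)$ partitions it into $\mf g$-central character blocks. The goal is to show that under typicality of $\chi$ the $\chi$-block on the $\mf g$-side meets exactly one $Z(\mf g_\oo)$-summand, and that this summand is naturally isomorphic to $V$ (possibly twisted by $\Lambda^{\text{max}}(\mf g_\one)$ and a parity shift). This yields an isomorphism $V \xrightarrow{\sim} (\text{Res})_{\chi^\oo}(\text{Ind})_\chi (V)$ witnessing the unit of the adjunction on the $\chi^\oo$-block.

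The counit is then verified dually: for $M \in \mf g\text{-smod}_\chi$, the canonical surjection $\text{Ind}(\text{Res}(M)) \twoheadrightarrow M$ combined with the identification from the previous step forces $(\text{Ind})_\chi \circ (\text{Res})_{\chi^\oo}$ to be naturally isomorphic to the identity on $\mf g\text{-smod}_\chi$. Exactness of both functors, together with the fact (from Theorem~\ref{MainThm1} and Corollary~\ref{aaa1}) that the simple objects of both categories are already matched by the Kac functor, then upgrades the isomorphisms on simples to a full equivalence via standard five-lemma and d\'evissage arguments.

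The main obstacle will be the central character calculation in the second step: one needs to show that the Harish-Chandra projection on $Z(\mf g)$ separates the $Z(\mf g_\oo)$-summands of $\Lambda(\mf g_\one) \otimes V$ in such a way that exactly one survives the $\chi$-projection under typicality. This is the technical heart of Gorelik's result and rests on the Kac-Sergeev description of the image of the Harish-Chandra homomorphism for $Z(\mf g)$, together with an explicit analysis of the ``anticenter'' (or ghost center) elements that bridge $Z(\mf g)$ and $Z(\mf g_\oo)$; the non-vanishing of the relevant Jacobian at a typical weight $\mu$ is precisely what makes the argument go through and why typicality cannot be dropped.
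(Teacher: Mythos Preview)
The paper does not prove this statement at all: Theorem~\ref{Thm::GorelikEquivThm} is quoted verbatim from Gorelik \cite[Theorem~1.3.1]{Go02s} and used as a black box in the proof of Lemma~\ref{nlem25}. So there is no ``paper's own proof'' to compare against; you have gone beyond what the paper does by sketching an argument for Gorelik's theorem itself.

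As for the sketch, your outline captures the correct architecture --- reduce to an analysis of how $Z(\mf g)$ and $Z(\mf g_\oo)$ interact on $\mathrm{Ind}_{\mf g_\oo}^{\mf g}(V)\cong \Lambda(\mf g_{\one})\otimes V$, and show that typicality forces the $\chi$-projection to single out a unique $\mf g_\oo$-summand --- and you correctly flag the central-character separation as the technical heart. Two cautions, however. First, the adjunction you want is $(\mathrm{Ind},\mathrm{Res})$ with $\mathrm{Ind}$ left adjoint; the unit goes $V\to \mathrm{Res}\,\mathrm{Ind}(V)$ and the counit $\mathrm{Ind}\,\mathrm{Res}(M)\to M$, so be careful with directions when you project. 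Second, your d\'evissage step (``upgrade isomorphisms on simples to a full equivalence'') is not automatic here: the categories $\mf g_\oo\text{-smod}_{\chi^\oo}$ and $\mf g\text{-smod}_\chi$ are not artinian, so knowing that two exact functors agree on simples and are adjoint does not by itself give an equivalence; Gorelik's actual argument works directly with the projection functors and the ghost center element $T^2$ rather than via a simple-by-simple analysis. Your acknowledged obstacle is genuinely where the entire proof lives, and the paper wisely defers to \cite{Go02s} for it.
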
 

We recall the equivalence $\Xi_\oo$ between  $\mc C_{K(\ov N)}$ and $\mc C_{K(y\cdot \mu)}$ 
that was defined in Theorem \ref{thm::g0CokerEquiv}.   

\begin{lem} \label{nlem25}
There is an isomorphism of functors
$\emph{Res}_{\mf g_\oo}^{\mf g}\circ \Xi \cong \Xi_\oo \circ \emph{Res}_{\mf g_\oo}^{\mf g}$.
\end{lem}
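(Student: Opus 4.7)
My plan is to prove the natural isomorphism by first checking it on the generator $K(\ov N)$ of $\mc C_{K(\ov N)}$, then on the ``translates'' $E\otimes K(\ov N)$ for finite-dimensional $\mf g$-supermodules $E$, and finally extending by right exactness of both sides along the coker presentations that define $\mc C_{K(\ov N)}$.

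First I would evaluate both functors on $K(\ov N)$. Since Kostant's problem has a positive solution for $K(\ov N)$ by Lemma~\ref{nlem15}, we have $\mc L(K(\ov N),K(\ov N))\cong U/I$, which gives
\begin{displaymath}
\Xi(K(\ov N))=(U/I)\otimes_{U}K(y\cdot\mu)\cong K(y\cdot\mu),
\end{displaymath}
using $I=\mathrm{Ann}_{U}(K(y\cdot\mu))$. Applying $\text{Res}_{\mf g_\oo}^{\mf g}$ and invoking the PBW theorem then produces $\Lambda(\mf g_{-1})\otimes V(y\cdot\mu)$. On the other side, $\text{Res}_{\mf g_\oo}^{\mf g}(K(\ov N))\cong\Lambda(\mf g_{-1})\otimes\ov N$ by PBW, and since $\Xi_\oo$ commutes with tensoring by finite-dimensional $\mf g_\oo$-modules (a consequence of the Bernstein--Gelfand identity \eqref{Eq::1stBeGeIdentity} at the $\mf s$-level combined with Kostant's problem for $\ov N$), we get $\Xi_\oo(\text{Res}_{\mf g_\oo}^{\mf g}(K(\ov N)))\cong\Lambda(\mf g_{-1})\otimes V(y\cdot\mu)$ as well.

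For $E\in\mc F$ the same computation yields $\Xi(E\otimes K(\ov N))\cong E\otimes K(y\cdot\mu)$, and both functors send $E\otimes K(\ov N)$ to $\text{Res}_{\mf g_\oo}^{\mf g}(E)\otimes\Lambda(\mf g_{-1})\otimes V(y\cdot\mu)$ in a way that is natural in $E$; the same holds for direct summands by additivity. Now any $M\in\mc C_{K(\ov N)}$ admits a presentation $X_{1}\to X_{0}\twoheadrightarrow M$ with each $X_{i}$ a summand of $E_{i}\otimes K(\ov N)$, and both $\text{Res}_{\mf g_\oo}^{\mf g}\circ\Xi$ and $\Xi_\oo\circ\text{Res}_{\mf g_\oo}^{\mf g}$ are exact (the first is an equivalence followed by restriction; the second is an exact restriction followed by an equivalence). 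Hence the natural isomorphism on generators extends to all of $\mc C_{K(\ov N)}$.

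The main obstacle will be to promote the object-level isomorphisms above to a genuine natural transformation, so that functoriality in $M$ is automatic and the extension to arbitrary $M$ via the coker presentation does not depend on choices. The cleanest route is to construct the comparison map canonically from the adjunctions underlying the definitions of $\Xi$ and $\Xi_\oo$, using Lemma~\ref{supercoind} to interchange the $\text{Res}$ with the bimodule Hom in the definition of $\Xi$, and then to identify the resulting $\mf g_\oo$-bimodule tensor product with $\Xi_\oo\circ\text{Res}_{\mf g_\oo}^{\mf g}$ via a Frobenius-type reciprocity between $U$-bimodules and $U_\oo$-bimodules in the spirit of Subsection~\ref{Subsection::HCB}.
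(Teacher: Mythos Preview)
Your plan differs from the paper's, and the gap you flag at the end is real and unfilled. You only produce object-by-object isomorphisms, and ``naturality in $E$'' is not enough: morphisms in $\mc C_{K(\ov N)}$ between $E\otimes K(\ov N)$ and $E'\otimes K(\ov N)$ are not all induced by maps $E\to E'$ (via \eqref{Eq::1stBeGeIdentity} they correspond to maps of $E^*\otimes E'$ into $\mc L(K(\ov N),K(\ov N))^{\ad}$, which is much larger). So even the extension along coker presentations is not justified. Your closing suggestion to route through Lemma~\ref{supercoind} is off target: that adjunction concerns $\mf g_{\geq 0}$-coinduction, not the $U$--$U_\oo$ bimodule passage you need.

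The paper bypasses all of this by writing the isomorphism directly as a chain of natural isomorphisms of functors, and the essential input you are not using is Gorelik's equivalence, Theorem~\ref{Thm::GorelikEquivThm}, for the typical character $\chi=\chi_\mu$. It yields $\text{Ind}_{\mf g_\oo}^{\mf g}V(y\cdot\mu)\cong K(y\cdot\mu)\oplus M$ with $M_\chi=0$, and $\text{Res}_{\mf g_\oo}^{\mf g}K(\ov N)\cong \ov N\oplus M'$ with $M'_{\chi^\oo}=0$. Since $\mc L(K(\ov N),{}_-)$ is killed on the right by $I\supset\ker\chi$, the piece $M$ contributes nothing to $\mc L(K(\ov N),{}_-)\otimes_U({}_-)$, so one may replace $K(y\cdot\mu)$ by $\text{Ind}_{\mf g_\oo}^{\mf g}V(y\cdot\mu)=U\otimes_{U_\oo}V(y\cdot\mu)$ and apply base change to get
\[
\text{Res}_{\mf g_\oo}^{\mf g}\big(\mc L(K(\ov N),{}_-)\otimes_U K(y\cdot\mu)\big)
\;\cong\;
\mc L\big(\text{Res}_{\mf g_\oo}^{\mf g}K(\ov N),\,\text{Res}_{\mf g_\oo}^{\mf g}({}_-)\big)\otimes_{U_\oo}V(y\cdot\mu),
\]
using the bimodule restriction identity (the analogue of \cite[Lemma~3.7(2)]{Co16}). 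The summand $M'$ is then discarded by the same central-character argument, leaving $\mc L(\ov N,\text{Res}_{\mf g_\oo}^{\mf g}({}_-))\otimes_{U_\oo}V(y\cdot\mu)=\Xi_\oo\circ\text{Res}_{\mf g_\oo}^{\mf g}$. Each step is a canonical natural isomorphism, so functoriality is automatic and no generator-and-extend argument is needed.
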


\begin{proof}
Applying $(\cdot)_{\chi}$ to $\text{Ind}_{\mf g_\oo}^{\mf g}(V(y\cdot \mu)) \twoheadrightarrow K(y\cdot \mu)$,
we get $(\text{Ind}_{\mf g_\oo}^{\mf g}(V(y\cdot \mu)))_{\chi} \twoheadrightarrow K(y\cdot \mu)$
as $K(y\cdot \mu)$ is indecomposable.
Hence Theorem~\ref{Thm::GorelikEquivThm} gives 
$(\text{Ind}_{\mf g_\oo}^{\mf g}(V(y\cdot \mu)))_{\chi}\cong K(y\cdot \mu)$. Therefore, we have 
$ \text{Ind}_{\mf g_\oo}^{\mf g}(V(y\cdot \mu)) = K(y\cdot \mu)\oplus M$,
for some $\mf g$-supermodule $M$ with $M_{\chi} =0$. 
Similarly,  $\text{Res}_{\mf g_\oo}^{\mf g}(K(\ov N)) = \ov N \oplus M'$, for 
some $\mf g$-supermodule $M'$ with $M'_{\chi_{\oo}} =0$. 

Recall that $X\otimes_A Y=0$ provided that there is an element $a\in A$ which annihilates $X$
and acts invertibly on  $Y$, in particular, if $X$ and $Y$ have different generalized central characters. 
Taking this into account, the observations in the previous  paragraph allow us to compute as follows:
\begin{displaymath}
\begin{array}{rcl} 
\text{Res}_{\mf g_{\oo}}^{\mf g}\left(\mc L(K(\ov N),{}_-)\otimes_U K(y\cdot \mu)\right)
&\cong&\text{Res}_{\mf g_{\oo}}^{\mf g}\left(\mc L(K(\ov N),{}_-)\otimes_U  
\text{Ind}_{\mf g_\oo}^{\mf g}V(y\cdot \mu) \right)\\
&\cong&\mc L( \text{Res}_{\mf g_{\oo}}^{\mf g}(K(\ov N)),  
\text{Res}_{\mf g_{\oo}}^{\mf g}({}_-)) \otimes_{U_\oo} V(y\cdot \mu) \\
&\cong&\mc L(\ov N, \text{Res}_{\mf g_\oo}^{\mf g}({}_-))\otimes_{U_\oo} V(y\cdot \mu),
\end{array}
\end{displaymath}
as desired. Here in the second row we used the obvious  analogue of \cite[Lemma~3.7(2)]{Co16}.
\end{proof}

We have a bijection 
$$\hat\Xi_\oo : \text{Irr}^{\mf g_\oo}(\mc C_{\ov N})\rightarrow \text{Irr}^{\mf g_\oo}(\mc C_{V(y\cdot \mu)}),$$
induced by  $\Xi_\oo$,  between the sets of isomorphism classes  of the simple $\mf g_\oo$-quotients 
of simple objects in $\mc C_{\ov N}$ and $\mc C_{V(y\cdot \mu)}$. 
For a given $W \in \text{Irr}^{\mf g_\oo}(\mc C_{\ov N})$, we define  
the related weight $\zeta_W\in \mf h^*$ by 
\begin{align} 
&V(\zeta_W) \cong \hat\Xi_\oo(W).
\end{align} 
The next statement describes the $\mf g_\oo$-rough structure of simple 
$\mf g$-supermodules in terms of the combinatorics of category $\mathcal{O}$ for $\mf g_\oo$.

\begin{cor}
For $L(V)\in \mathrm{Irr}^{\mf g}(\mc C_{K(\ov N)})$ and 
$W\in \mathrm{Irr}^{\mf g_\oo}(\mc C_{\ov N})$, 
we have the following multiplicity formula in the category of $\mf g_\oo$-supermodules:
\begin{align}\label{Eq::RoughStr-s}
[\mathrm{Res}^{\mathfrak{g}}_{\mathfrak{g}_\oo}(L(V)):W] =
[\mathrm{Res}^{\mathfrak{g}}_{\mathfrak{g}_\oo}(L(\xi_V)):V(\zeta_W)].
\end{align}
\end{cor}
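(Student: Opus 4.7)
The strategy mirrors that of Corollary~\ref{thmrough}, transporting the multiplicity computation through the even equivalence $\Xi_\oo$ and the intertwining relation with restriction from Lemma~\ref{nlem25}.

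First, I would unwind the bijections $\hat\Xi$ and $\hat\Xi_\oo$ at the level of coker categories. By the construction preceding the statement, there is a (unique) simple object $\tilde L(V)$ of $\mc C_{K(\ov N)}$ whose $\mf g$-simple top is $L(V)$, and $\Xi(\tilde L(V))$ is the simple object of $\mc C_{K(y\cdot\mu)}$ with simple top $L(\xi_V)$. Analogously, $W\in\text{Irr}^{\mf g_\oo}(\mc C_{\ov N})$ is the simple top of a simple object $\tilde W\in\mc C_{\ov N}$, and $\Xi_\oo(\tilde W)$ is the simple object of $\mc C_{V(y\cdot\mu)}$ with top $V(\zeta_W)$.

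Next, I would apply Lemma~\ref{nlem25} to $\tilde L(V)$, obtaining the natural isomorphism
$$\mathrm{Res}^{\mf g}_{\mf g_\oo}\bigl(\Xi(\tilde L(V))\bigr)\cong \Xi_\oo\bigl(\mathrm{Res}^{\mf g}_{\mf g_\oo}(\tilde L(V))\bigr).$$
Since $\Xi_\oo$ is an equivalence of categories, it preserves multiplicities of simple objects of $\mc C_{\ov N}$. Combining this with the previous display gives
$$[\mathrm{Res}^{\mf g}_{\mf g_\oo}(\tilde L(V)):\tilde W]_{\mc C_{\ov N}}=[\mathrm{Res}^{\mf g}_{\mf g_\oo}(\Xi(\tilde L(V))):\Xi_\oo(\tilde W)]_{\mc C_{V(y\cdot\mu)}},$$
which is the analog of \eqref{Eq::RoughStr-s} at the level of coker-category multiplicities.

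The main obstacle, and the last step, is to argue that these coker-category multiplicities coincide with the genuine $\mf g_\oo$-composition multiplicities appearing in \eqref{Eq::RoughStr-s}. For this I would invoke (as in the discussion preceding Corollary~\ref{thmrough} and in \cite[Section~11]{MaSt08}) the annihilator dichotomy: the annihilator of the radical of a simple object in $\mc C_{\ov N}$ (respectively $\mc C_{V(y\cdot\mu)}$) is strictly larger than that of its simple top, so among the composition factors of $\mathrm{Res}(\tilde L(V))$ that share the annihilator of $W$, each occurrence of $W$ corresponds bijectively to one occurrence of $\tilde W$, and every occurrence of $W$ in $\mathrm{Res}(L(V))$ with this annihilator already lies in $\mathrm{Res}(\tilde L(V))$ since $L(V)$ is the simple top of $\tilde L(V)$. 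The same bookkeeping on the right-hand side then yields \eqref{Eq::RoughStr-s}. This matching between ``rough'' composition multiplicity in the coker category and $\mf g_\oo$-composition multiplicity is where I expect the real work to lie; modulo it, the displayed identities above give the corollary.
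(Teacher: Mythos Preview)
Your approach is essentially the same as the paper's: apply $\Xi_\oo$ to the left-hand side and invoke Lemma~\ref{nlem25} to intertwine with restriction. The paper's proof is a single sentence to this effect; your proposal correctly unpacks it, including the annihilator-dichotomy bookkeeping from \cite[Section~11]{MaSt08} needed to pass between coker-category multiplicities and genuine $\mf g_\oo$-composition multiplicities, which the paper leaves implicit.
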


\begin{proof}
Equality~\eqref{Eq::RoughStr-s} is obtained by, first, applying $\Xi_\oo$ to the left hand side 
and then using Lemma~\ref{nlem25}.
\end{proof}
 

\vspace{5mm}

Department of Mathematics, Uppsala University, Box. 480,
SE-75106, Uppsala, SWEDEN,\\
emails: {\tt chih-whi.chen\symbol{64}math.uu.se}\hspace{1cm} {\tt mazor\symbol{64}math.uu.se}


\begin{thebibliography}{99999}\frenchspacing
\bibitem[BCW]{BCW} I.~Bagci, K.~Christodoulopoulou, E.~Wiesner. Whittaker categories and 
Whittaker modules for Lie superalgebras. Comm. Algebra {\bf 42} (2014), no. 11, 4932--4947. 
\bibitem[BO]{BO} V.~Bavula, F.~van~Oystaeyen. The simple modules of the Lie 
superalgebra $\mathfrak{osp}(1,2)$. J. Pure Appl. Algebra {\bf 150} (2000), no. 1, 41--52.
\bibitem[BF]{BF} A.~Bell. R.~Farnsteiner. On the theory of Frobenius 
extensions and its application to Lie superalgebras. Trans. Amer. Math. Soc. 
{\bf 335} (1993), no. 1, 407--424.
\bibitem[BG]{BeGe} J.~Bernstein, S.~Gelfand. Tensor products of 
finite- and infinite-dimensional representations of semisimple Lie 
algebras.  Compositio Math.  {\bf 41}  (1980), no. 2, 245--285. 
\bibitem[BGG]{BGG76} I.~Bernstein, I. Gelfand, and S. Gelfand. A certain
category of $\mathfrak{g}$-supermodules. Funkcional. Anal. i Prilozen.
{\bf 10} (1976), 1--8.
\bibitem[Bl]{Bl} R.~Block. The irreducible representations of the Lie algebra 
$\mathfrak{sl}(2)$ and of the Weyl algebra. Adv. in Math. {\bf 39} (1981), no. 1, 69--110.
\bibitem[BM]{BM} C.~Boyallian, V.~Meinardi. Irreducible continuous representations 
of the simple linearly compact n-Lie superalgebra of type W. J. Algebra {\bf 490} (2017), 493--517.
\bibitem[CZ]{CZ} Y.~Cai, K.~Zhao. 
Module structure on $U(H)$ for basic Lie superalgebras. 
Toyama Math. J. {\bf 37} (2015), 55--72. 
%
%
%
%
\bibitem[CW]{ChWa12} S.-J.~Cheng and W.~Wang.
Dualities and representations of Lie superalgebras. Graduate
Studies in Mathematics {\bf 144}. American Mathematical Society,
Providence, RI, 2012.
\bibitem[CMW]{ChMaWa13}S.-J.~Cheng, V.~Mazorchuk, W.~Wang.
Equivalence of blocks for the general linear Lie superalgebra.
Lett. Math. Phys. {\bf103} (2013), no. 12, 1313--1327.
\bibitem[Co]{Co16} K.~Coulembier. The Primitive Spectrum of a Basic 
Classical Lie Superalgebra. Communications in Mathematical Physics {\bf 348.2} (2016): 579--602.
%
%
\bibitem[DMP]{DMP} I.~Dimitrov, O.~Mathieu, I.~Penkov. On the structure of weight modules. 
Trans. Amer. Math. Soc. {\bf 352} (2000), no. 6, 2857--2869. 
\bibitem[Di]{Di96} J.~Dixmier. Enveloping algebras. Revised reprint of the 1977
translation. Graduate Studies in Mathematics, {\bf 11}.
American Mathematical Society, Providence, RI, 1996.
%
%
%
\bibitem[Du2]{Du77} M.~ Duflo.
Sur la classification des id\'eaux primitifs dans l'alg\`ebre enveloppante d'une alg\`ebre de Lie semi-simple.
Ann. of Math. (2) {\bf 105} (1977), no. 1, 107--120. 
\bibitem[FGG]{FGG} T.~Ferguson, M.~Gorelik, D.~Grantcharov. Bounded highest weight modules over 
$\mathfrak{osp}(1,2n)$. Lie algebras, Lie superalgebras, vertex algebras and related topics, 
135--144, Proc. Sympos. Pure Math., {\bf 92}, Amer. Math. Soc., Providence, RI, 2016.	
\bibitem[Ge]{Ge98} J.~Germoni. Indecomposable representations of special linear Lie superalgebras. 
Journal of Algebra {\bf 209.2} (1998): 367--401.
\bibitem[Go1]{Go} M.~Gorelik. On the ghost centre of Lie superalgebra.
Ann. Inst. Fourier (Grenoble) {\bf 50} (2000), no. 6, 1745--1764. 
\bibitem[Go2]{Go02} M.~Gorelik. Annihilation theorem and separation theorem for 
basic classical Lie superalgebras. Journal of the American Mathematical Society 
{\bf 15} (2002), no. 1, 113--165.
\bibitem[Go3]{Go02s} M.~Gorelik.
Strongly typical representations of the basic classical Lie superalgebras. 
J. Amer. Math. Soc. {\bf 15} (2002) no. 1, 167--184.
\bibitem[GG]{GG} M.~Gorelik, D.~Grantcharov. Bounded highest weight modules over 
$\mathfrak{q}(n)$. Int. Math. Res. Not. IMRN {\bf 2014}, no. 22, 6111--6154. 	
\bibitem[Gr]{Gr} D.~Grantcharov. Coherent families of weight modules of Lie superalgebras 
and an explicit description of the simple admissible $\mathbf{sl}(n+1|1)$-modules. J. Algebra 
{\bf 265} (2003), no. 2, 711--733. 	
\bibitem[IS]{IrSh88}
R.~Irving, B.~Shelton. Loewy series and simple projective modules
in the category ${\mathscr O}\sb S$.  Pacific J. Math.  {\bf 132}
(1988),  no. 2, 319--342.
\bibitem[Ja]{Ja83}
J. C. Jantzen. Einh{\"u}llende Algebren halbeinfacher Lie-Algebren.
Proceedings  of the International Congress of Mathematicians, (1983).
\bibitem[Jo]{Jo80}
A.~Joseph. Kostant's problem, Goldie rank and the Gelfand-Kirillov
conjecture.  Invent. Math.  {\bf 56}  (1980), no. 3, 191--213.
\bibitem[Ka]{Ka78} 
V.~Kac. Representations of classical Lie superalgebras. 
Differential geometrical methods in mathematical physics II (1978): 597--626.
\bibitem[KM]{KhMa04}
O.~Khomenko, V.~Mazorchuk. Structure of modules induced from simple
modules with minimal annihilator. Canad. J. Math. {\bf 56} (2004),
no. 2, 293--309.
%
%
%
%
\bibitem[Ma3]{Ma} V.~Mazorchuk. Classification of simple $\mathfrak{q}_2$-supermodules. 
Tohoku Math. J. (2) {\bf 62} (2010), no. 3, 401--426. 
\bibitem[MM]{MaMe12}V.~Mazorchuk, V.~Miemietz. Serre functors for Lie algebras and superalgebras.
Ann. Inst. Fourier {\bf 62} (2012) no. 1, 47--75.
\bibitem[MS]{MaSt08} V.~Mazorchuk, C.~Stroppel. Categorification of 
(induced) cell modules and the rough structure of generalised 
Verma modules. Adv. Math. {\bf 219} (2008), no. 4, 1363--1426. 
\bibitem[Mu]{Mu12}
I.~Musson.
Lie superalgebras and enveloping algebras. 
Graduate Studies in Mathematics, 131. American Mathematical Society, Providence, RI, 2012.
\bibitem[Sc06]{Sc06} M.~Scheunert. The Theory of Lie Superalgebras.
Lecture Notes in Mathematics {\bf 716}, Springer, 1979.
\bibitem[Se02]{Se02}
V.~Serganova. On representations of the Lie superalgebra $\mf p(n)$.
Journal of Algebra {\bf 258.2} (2002): 615--630.
\bibitem[WZZ]{WZZ} Z.~Wei, Y.~Zhang, Q.~Zhang. Simple modules for some Cartan-type Lie 
superalgebras. Hacet. J. Math. Stat. {\bf 44} (2015), no. 1, 129--152. 
\end{thebibliography}
\end{document}